\definecolor {processblue}{cmyk}{0.96,0,0,0}
\numberwithin{theorem}{section}
\newcommand{\TheTitle}{Relative Optimality Conditions and Algorithms for \\ Treespace Fr\'{e}chet Means} 
\newcommand{\RunningTitle}{Fr\'{e}chet Mean Trees: Optimality \& Algs.} 
\newcommand{\TheAuthors}{S. Skwerer, S Provan, J.S. Marron}
\headers{\RunningTitle}{\TheAuthors}
\title{{\TheTitle}\thanks{Submitted to the editors November 2015.}}
\author{
	Sean Skwerer\thanks{
		(\email{seanskwerer@gmail.com} ).}
	\and
	Scott Provan\thanks{Department of Statistics and Operations Research University of North Carolina Chapel Hill (\email{provan@unc.edu}.}
	\and
	J.S . Marron \thanks{Department of Statistics and Operations Research University of North Carolina Chapel Hill (\email{marron@unc.edu}.}
}
\theoremstyle{plain}
\newtheorem{thm}{Theorem}[section]
\newtheorem{cor}[thm]{Corollary}
\newtheorem{lem}[thm]{Lemma}
\theoremstyle{definition}
\newtheorem{defn}[thm]{Definition}
\newcommand{\T}{\mathcal{T}}
\newcommand{\Or}{\mathcal{O}}
\newcommand{\A}{\mathcal{A}}
\newcommand{\B}{\mathcal{B}}
\newcommand{\V}{\mathcal{V}}
\newcommand{\tr}[1]{ \begin{list}{}{\setlength{\leftmargin}{#1em}} \item}
	\newcommand{\tl}{ \end{list}}
\newcommand{\mymin}[2]{ 
	\begin{array}{c l}
		\textrm{min} & #2 \\
		#1
	\end{array}
}
\providecommand{\abs}[1]{\lvert#1\rvert}
\providecommand{\norm}[1]{\lVert#1\rVert}
\begin{document}

\maketitle

% REQUIRED
\begin{abstract}
Recent interest in treespaces as well-founded mathematical domains for phylogenetic inference and statistical analysis for populations of anatomical trees has motivated research into
efficient and rigorous methods for optimization problems on treespaces.
A central problem in this area is computing an average of phylogenetic trees, which is equivalently characterized as the minimizer of the Fr\'{e}chet function.
The Fr\'{e}chet mean can be used for statistical inference and exploratory data analysis: for example it can be leveraged as a test statistic to compare groups via permutation tests, or to find trends in data over time via kernel smoothing.
By analyzing the differential properties of the Fr\'{e}chet function along geodesics in treespace we obtained a theorem describing a decomposition of the derivative along a geodesic.
This decomposition theorem is used to formulate optimality conditions
which are used as a logical basis for an algorithm to verify relative optimality at points where the Fr\'{e}chet function gradient does not exist.
\end{abstract}

% REQUIRED
\begin{keywords}
  optimization, nonlinear, combinatorics, phylogenetics, cubical complexes, trees
\end{keywords}

% REQUIRED
\begin{AMS}
 90C48,90C90
\end{AMS}

\section{Introduction}\label{sec:intro}

The space of phylogenetic trees introduced by Billera, Holmes and Vogtman \cite{BHV} is a metric space in which each point corresponds to a hypothetical evolutionary history. 
This space will be referred to as BHV Treespace. 
Inference about the evolutionary relationships of species
	has been a long standing issue in biology.
	As such phylogenetics is a mature field, with many approaches. 
	For book length treatments of phylogenetic inference and related mathematics see \cite{Felsenstein2004,gascuel2005mathematics}. 
	A space of phylogenetic trees is a pivotal mathematical concept for sound inference of evolutionary relationships \cite{holmes2003statistics,holmes2005statistical}.
	Although the specific metric for a space phylogenetic trees, or scope of a space of models for evolutionary relationships to include various types of trees or other structures such as networks are issues which may not yet be completely resolved, any such space will have a non-Euclidean geometry.
	The solution of the Fr\'{e}chet mean problem on BHV Treespace is a relevant research problem for phylogenetics and for applications involving modeling biological forms as trees. Treespace has also been used in statistical analyses where populations of lungs \cite{feragen2012hierarchical} and arteries \cite{Skwerer2014} are modeled as trees.
The central research problem of this paper is efficient computation of the Fr\'{e}chet mean of a discrete sample of points in treespace.
More specifically this paper focuses on deriving optimality conditions through the analysis of derivatives along geodesic paths issuing from singular points in treespace and an iterative interior point method for local optimization.

The main challenge here is that the Fr\'{e}chet function gradient is not well-defined everywhere in treespace. 
	The stratified structure of treespace is formed
	by gluing together Euclidean orthants (this is described in detail in Sec. \ref{sec:TreeSpace}).
	Gradients are not well defined at such glued points due to this non-differentiable geometry.
	Even if the domain of the Fr\'{e}chet optimization is restricted to a single orthant of treespace while the trees $T_1,...,T_n$ are supported on the entire space, the value of the Fr\'{e}chet function is piecewise continuous, but not differentiable.
	To be precise, on the boundary of the orthant, in directions from boundary to interior, directional derivatives of the Fr\'{e}chet function exist but the gradient is not well defined; and the interior of an orthant of treespace can be subdivided into regions where the Fr\'{e}chet function is $C^\infty$ differentiable on their interiors but it is only $C^1$ differentiable on their boundaries.

Derivative free search procedures can be used to avoid these non-differentiability issues (discussed further in Sec. \ref{sec:discussion}).
These derivative free procedures are designed to converge in distance to the optimal point. When the Fr\'{e}chet mean is not a completely resolved tree, these procedures, which in practice can continue only for finite number of iterations, will return trees having edges that are not present in the Fr\'{e}chet mean.

When the Fr\'{e}chet mean is a fully resolved  tree, the gradient of the Fr\'{e}chet function at that point in treespace exists, and its optimality can be confirmed by checking the gradient is zero.
However, to date, there are no known quickly verifiable certificates for a treespace Fr\'{e}chet mean in general.
Our main results in this paper are (1) a decomposition theorem for derivatives along geodesics and (2) an algorithm to find the minimizer of the Fr\'{e}chet function when the domain is restricted to a single orthant of treespace. 
This algorithm will determine the optimal point in a single orthant of treespace even when the optimal point represents a tree which is not fully resolved.

Essentially, our new algorithm minimizes the directional derivative of the Fr\'{e}chet function over the set of all directions issuing from a point to within the closure of an orthant, and verifies that there is no direction where the derivative is negative.
	The crux of this approach is that when the Fr\'{e}chet function gradient does not exist then generally the gradient of the directional derivative function does not exist.
	This issue is solved with a nested optimality condition based on recursive decomposition of the directional derivatives.

The remaining contents of this paper are organized as follows. Background about treespace and geodesics is presented in Sec. \ref{sec:background}. 
In Sec. \ref{sec:discussion} we give a high-level discussion of technical issues for the Fr\'{e}chet optimization problem.
The main theorems are discussed in Sec. \ref{sec:main_results} (proofs are in Sec. \ref{sec:DiffAnalysis}).
In Sec. \ref{sec:IntPointMethods} a method finding the minimizer
of the Fr\'{e}chet function in a fixed orthant of treespace is presented. 
The focus of Sec. \ref{sec:DiffAnalysis} is our analysis of the differential properties of the Fr\'{e}chet function.
Concluding remarks and further research directions are in Sec. \ref{sec:conclusion}.

\section{Background}\label{sec:background}
Definitions and descriptions of phylogenetic trees and Billera, Holmes, and Vogtman (BHV) Treespace are given in Sec. \ref{sec:PhylogeneticTrees} and Sec. \ref{sec:TreeSpace}.
Treespace geodesics are defined in Sec. \ref{sec:Geodesics}.
In Sec. \ref{sec:FMproblem} we state the Fr\'{e}chet mean optimization problem. 
In Sec. \ref{sec:VistalCells} we describe how the combinatorics of treespace 
geodesics lead to subdivision of treespace
into regions where the Fr\'{e}chet function has a fixed algebraic form.

%\subsection{Phylogenetic trees and BHV Treespaces}\label{sec:PhylogeneticTrees}

\subsection{Phylogenetic trees}\label{sec:PhylogeneticTrees}

Evolutionary histories or hierarchical relationships are often represented graphically as phylogenetic trees.  In biology, the evolutionary history of species or operational taxonomic units (OTU's) is represented by a tree. 
The root of the tree corresponds to a common ancestor. Branches indicate speciation of a nearest common ancestor into two or more distinct taxa. The leaves of the tree correspond to the present species whose history is depicted by the tree. An overview of the data used in phylogenetic inference is presented in Sec. \ref{sec:GenData} and mathematical definitions for phylogenetic trees are given in Sec. \ref{sec:PhyloTreeDefns}.

	\subsubsection{Genetic Data for Phylogenetics}\label{sec:GenData}
	Phylogenetic trees and inferences about the evolutionary relationships of species are typically made from
	molecular sequence data of DNA, RNA, amino acids, or proteins, aligned for interspecies comparison. Table \ref{tab:GenSeqData} contains a small artificial example of DNA base pair data.
	
	\begin{table}[h!]
		\centering
		\caption{Artificial example of nine aligned DNA base pairs from seven species of snake.}
		\label{tab:GenSeqData}
		\begin{tabular}{l||c|c|c|c|c|c|c|c|c|c}
			Species & a& b& c & d & e & f & g & h & i \\
			\hline
			1. King Cobra & A &  A & T & A & C & T & A & A & C \\
			2. Copper Head & A & A & T & A & G & T & T & A & G\\
			3. Black Mamba & A & A & T & G & G & C & T & A & G \\
			4. Corn Snake & A & C & T & G & G & C & A & A & G \\
			5. Boa Constrictor & T & C & T & A & C & T & A & A & G \\
			6. Coral Snake & T & C & A & G & C & T & A & A & G\\
			7. Cotton Mouth & T & C & A & G & C & T &  A & T & C\\
		\end{tabular}
	\end{table}
	
	In this example, the pattern in each column yields a partition of the species into two groups.
	Column a partitions the snakes into \{King Cobra,  Copper Head, Black Mamba, Corn Snake\} and \{Boa Constrictor, Coral Snake, Cotton Mouth\}.
	Snakes with A (adenine) in column b are a subset of the group of snakes with A in column a, and likewise snakes with T (thymine) in column a are a subset of the group of snakes with C (cytosine) in column b.
	Together column a and column b partition into three groups: \{King Cobra,Copper Head, Black Mamba\}, \{Corn Snake\}, and \{Boa Constrictor, Coral Snake, Cotton Mouth\}.
	Column a and column b are an example of compatible splits (see. Sec. \ref{sec:PhyloTreeDefns} for definition for compatible splits).
	On the other hand, column b and column d are an example of incompatible splits.
	The pattern in column b separates Black Mamba from Corn Snake, while the pattern in column d puts these snakes in the same group.
	
	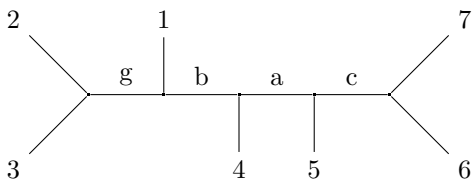
\begin{figure}
		\begin{center}
			
			\begin{tikzpicture}
			\node[] (1) at (0,1) {2};
			\node[] (2) at (0,-1) {3};
			\node[fill,inner sep=0pt,minimum size=1pt] (3) at (1,0) {};
			\node[fill,inner sep=0pt,minimum size=1pt] (4) at (2,0) {};
			\node[] (5) at (2,1) {1};
			\node[fill,inner sep=0pt,minimum size=1pt] (6) at (3,0) {};
			\node[] (7) at (3,-1) {4};
			\node[fill,inner sep=0pt,minimum size=1pt] (8) at (4,0) {};
			\node[] (9) at (4,-1) {5};
			\node[fill,inner sep=0pt,minimum size=1pt] (10) at (5,0) {};
			\node[] (11) at (6,-1) {6};
			\node[] (12) at (6,1) {7};
			
			\path[-] (1) edge (3);
			\path[-] (2) edge (3);
			\path[-] (3) edge node[above]{g} (4);
			\path[-] (4) edge (5);
			\path[-] (4) edge node[above]{b} (6);
			\path[-] (6) edge (7);
			\path[-] (6) edge node[above]{a} (8);
			\path[-] (8) edge (9);
			\path[-] (8) edge node[above]{c} (10);
			\path[-] (10) edge (11);
			\path[-] (10) edge (12);
			
			\end{tikzpicture}
			
		\end{center}
		
		\caption{A phylogenetic tree for seven species of snake from data in Table \ref{tab:GenSeqData}. This tree represents the partitions of the species into groups from columns a, b, c, and g. }
		\label{fig:DNASeqExTree}
	\end{figure}
	
	A set of compatible splits can be combined into a tree.
	The information in columns a, b, c, g and h suggests one possible configuration of evolutinary relationships for these snake species, which is depicted as a tree in Figure \ref{fig:DNASeqExTree}.

	The split from column d is incompatible with columns a, b, c, and g, and such conflicting information cannot be put into a single tree.
	Genetic data may support multiple evolutionary histories because genes, the elementary units of inheritance \cite{gerstein2007gene}, may evolve independently from each other as they are driven by independent environmental factors.
	The problem of forming a species tree from gene trees is known as gene tree species tree reconciliation \cite{larget2010bucky,maddison1997gene}.

\subsubsection{Mathematical Definitions for Phylogenetic Trees}\label{sec:PhyloTreeDefns}
A \emph{labeled tree} is a tree $T$ with $r+1$ leaves distinctly labeled using the index set $I =\{0,1,\ldots,r\}$.
An edge $e$ is characterized by a \emph{split}, which is a partition of $I$ into two disjoint sets of labels, $X_e$ and $\bar{X}_e$. Edge $e$ is present in tree $T$ if and only if deleting $e$ from $T$ yields one subtree with leaves distinctly labeled by $X_e$ and another subtree with leaves distinctly labeled by $\bar{X}_e$.

A \emph{phylogenetic tree} is a labeled tree with weighted edges: 
the set of edges for a tree $T$ is written $E_T$, and edge weights are a function from the edges of $T$ to the positive real numbers, $|\;\;|_T:E_T \to \mathbb{R}_{>0}$.
The lengths of edges in a tree will typically represent some measure of genetic difference, or in some models the passage of time. 
	When edge lengths represent the passage of time, the leafs of the tree are associated with contemporary species, while the root is associated with an ancestral species. 
	In that case the edge lengths must be normalized so that the passage of time represented by the path from the ancestor to each contemporary species is the same.
The \emph{topology of a phylogenetic tree} is the underlying graph and leaf labels separated from the edge lengths.
The topology of a phylogenetic tree is uniquely represented by the set of splits associated with its edges.
Formally, two splits $X_e \cup \bar{X}_e$ and $X_f\cup \bar{X}_f$ 
are \emph{compatible} if and only if $X_e \subset X_f$ and $\bar{X}_f \subset \bar{X}_e$, or $X_f \subset X_e$ and $\bar{X}_e \subset \bar{X}_f$. 
Compatibility can be interpreted in terms of subtrees: the subtree with leaves in bijection with $\bar{X}_e$
contains the subtree with leaves in bijection with $\bar{X}_f$, or vice versa.
The compatibility of splits on snake species from the aligned DNA base pairs in \ref{tab:GenSeqData} depicted as a graph in Figure \ref{fig:GenSeqLink}, with one node for each split and an edge connecting each compatible pair.
If every pair of splits in a set of splits is compatible then that set is said to be a compatible set.
Each distinct set of compatible splits is equivalent to a unique phylogenetic tree topology. A maximal tree topology is one in which no additional interior edges can be introduced i.e. $|E_T|=2r-1$, or equivalently every interior vertex has degree 3.

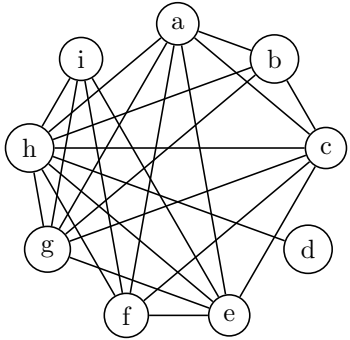
\begin {figure}
\begin{center}
	
	\begin {tikzpicture}[-latex ,auto ,node distance =4 cm and 5cm ,on grid ,
	semithick ,
	state/.style ={ circle ,top color =white , bottom color = white ,
		draw,black , text=black , minimum width =1 cm}]
	%state/.style ={ circle ,top color =white , bottom color = processblue!20 ,
	%	draw,processblue , text=blue , minimum width =1 cm}]
	
	\foreach [count=\i] \j in {a,b,...,i}{
		\draw ( 130-\i*360/9 : 2cm) node[state,minimum size=0.5cm](\i){\j};
	}
	
	\path[black] (1) edge[-] (2);
	\path[black] (1) edge[-] (3);
	\path[black] (1) edge[-] (5);
	\path[black] (1) edge[-] (6);
	\path[black] (1) edge[-] (7);
	\path[black] (1) edge[-] (8);

	\path[black] (2) edge[-] (3);
	\path[black] (2) edge[-] (7);
	\path[black] (2) edge[-] (8);

	\path[black] (3) edge[-] (5);
	\path[black] (3) edge[-] (6);
	\path[black] (3) edge[-] (7);
	\path[black] (3) edge[-] (8);

	\path[black] (4) edge[-] (8);

	\path[black] (5) edge[-] (6);
	\path[black] (5) edge[-] (7);
	\path[black] (5) edge[-] (8);
	\path[black] (5) edge[-] (9);

	\path[black] (6) edge[-] (8);
	\path[black] (6) edge[-] (9);

	\path[black] (7) edge[-] (8);
	\path[black] (7) edge[-] (9);

	\path[black] (8) edge[-] (9);
	
\end{tikzpicture}

\end{center}

\caption{Compatibility graph for splits on snake species from aligned DNA base pairs in Table \ref{tab:GenSeqData}.}
\label{fig:GenSeqLink}
\end{figure}

\subsection{Construction of BHV Treespaces}\label{sec:TreeSpace}
A BHV Treespace, $\T_r$ is a geometric space in which each point represents a phylogenetic tree having  leaves in bijection with a fixed label set $\{0,1,2,\ldots,r\}$.

A \emph{non-negative orthant} is a copy of the subset of $n$-dimensional Euclidean space defined by restricting each coordinate to non-negative values, $\mathbb{R}^{n}_{\geq 0}$. Here, only non-negative orthants are used, so we use orthant to mean non-negative orthant.
An \emph{open orthant}
is the set of positive points in an orthant.
Phylogenetic treespace is a union of many orthants, each corresponding to a distinct tree topology,
wherein the coordinates of a point are interpreted as the lengths of edges.
For a given set of compatible edges $E$, the associated orthant is denoted $\Or(E)$, 
and for a given tree $T$, the minimal orthant in treespace containing
that point is denoted $\Or(T)$.
Trees in $\mathcal{T}_r$ have at most $r-2$ interior edges.
Each orthant of dimension $r-2$ corresponds to a combination of $r-2$ compatible edges. 
Orthants are glued together along common axes.
The shared faces of facets with $k$ positive coordinates are called the $k$-dimensional faces of 
treespace.

Take $\T_4$ as an example.
There are ten possible splits (not including five leaf edges), see Table \ref{table:T4Splits} for a list.
These splits can be combined into fifteen distinct tree topologies, comprised of pairs of compatible splits.
Each compatible pair is associated with a copy of $\mathbb{R}^2_{\geq 0}$,
one axis of the orthant for each edge in the pair.  
These fifteen orthants are glued together along common axes.
Views of two parts of $\T_4$ are displayed in Figure \ref{T_4_parts}.
See Figure \ref{T_4_link} for a visualization of the split-split compatibility graph of $\T_4$ \footnote{An interesting fact is that the compatibility graph of $\T_4$ is a Peterson graph. }.
\begin{table}
\begin{centering}
\begin{tabular}{|c|c|c|c|c|}
	\hline
	$\{0,1\}|\{2,3,4\}$ & $\{0,1,2\}|\{3,4\}$ & $\{0,2\}|\{1,3,4\}$ & $\{0,2,4\}|\{1,3\}$ & $\{0,1,3\}|\{2,4\}$\\
	\hline
	$\{0,1,4\}|\{2,3\}$ & $\{0,2,3\}|\{1,4\}$ & $\{0,3\}|\{1,2,4\}$ & $\{0,3,4\}|\{1,2\}$ & $\{0,4\}|\{1,2,3\}$\\
	\hline
\end{tabular}
\caption{Ten partitions of $\{0,1,2,3,4\}$ which are splits for internal edges in the BHV Treespace $\T_4$. Comptable splits are combined to make tree topologies as depicted if Fig. \ref{fig:T_4_parts}.}
\label{table:T4Splits}
\end{centering}
\end{table}

\begin{figure}[H]
\centering
\begin{subfigure}[b]{\textwidth}
\centering
\includegraphics[width = 0.6\textwidth]{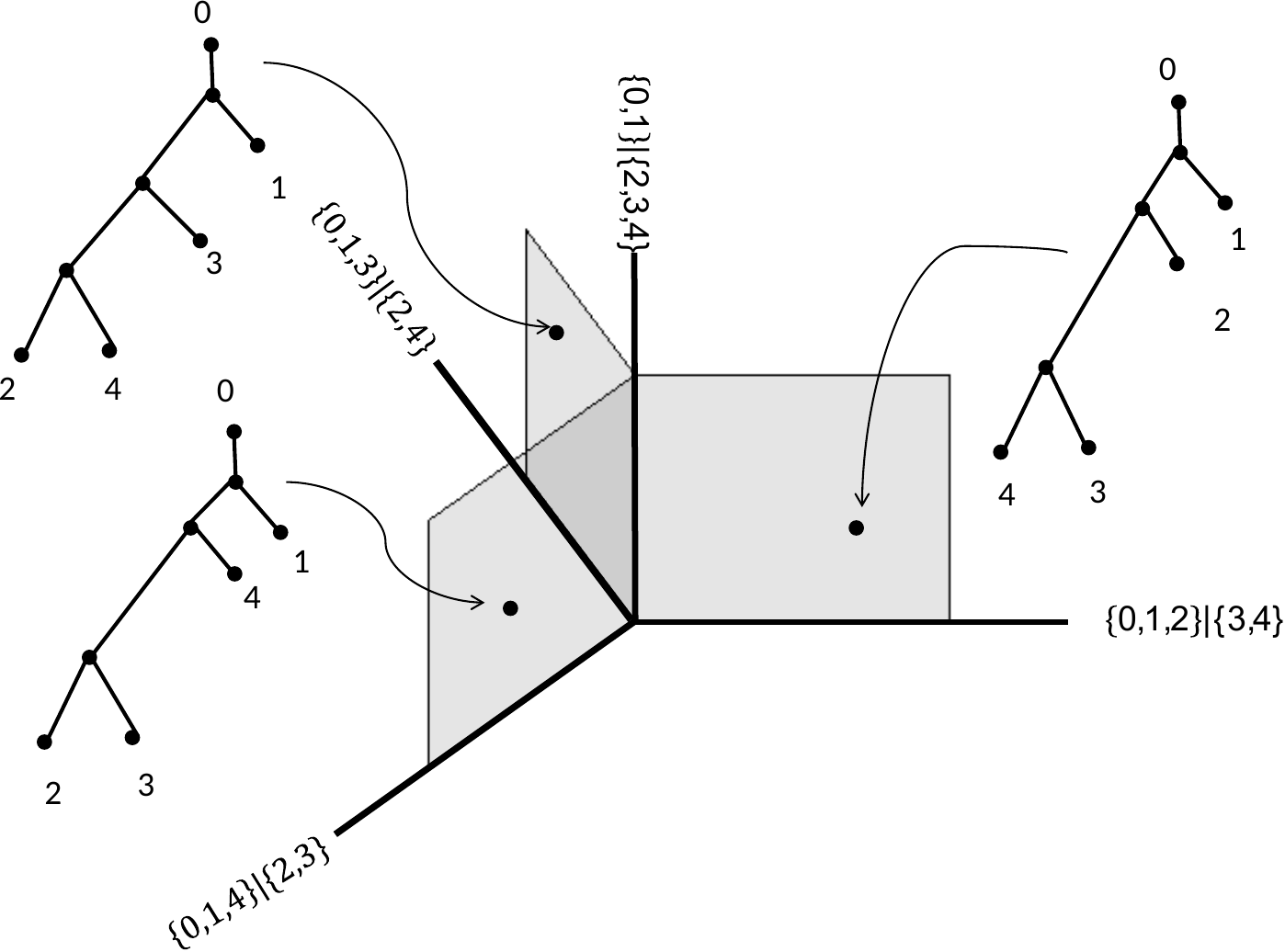}
\caption{A half-open book with three pages.  In this diagram the \emph{pages} of the book are three copies of $\mathbb{R}_{\geq 0}^2$ and the \emph{spine} is a copy of $\mathbb{R}_{\geq 0}$. The spine is labeled with the split $\{0,1\}|\{2,3,4\}$. Each page has the spine as one axis and the other axis is labeled with a split compatible with $\{0,1\}|\{2,3,4\}$. In $\T_4$ every one of the ten splits of $\{0,1,2,3,4\}$ is the label for the spine of a half-open book. }
\label{open_book}
\end{subfigure}
\begin{subfigure}[b]{\textwidth}
\centering
\includegraphics[width = 0.6\textwidth]{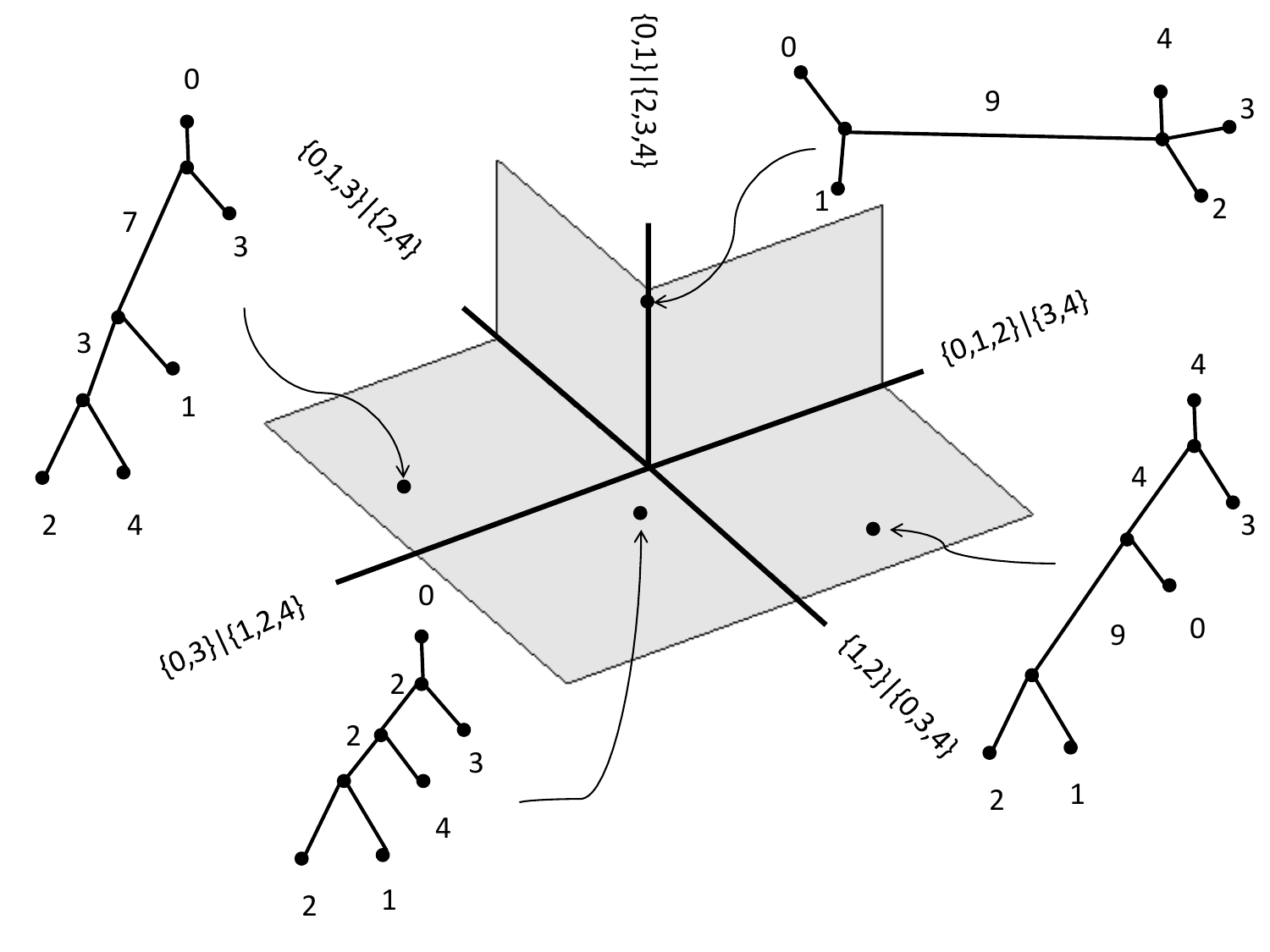}
\caption{A five-cycle. A \emph{five-cycle} is five copies of $\mathbb{R}^2_{\geq 0}$ glued together along commonly labeled axes. }
\label{five_cycle}
\end{subfigure}
\caption{}
\label{fig:T_4_parts}
\end{figure}

Each clique in the split-split compatibility graph represents a compatible combination of splits, or equivalently the topology of a phylogenetic tree. A graph is \emph{complete} if there is an edge between every pair of vertices. In a graph, a \emph{clique} is a complete subgraph. Each full phylogenetic tree is a maximal clique in the split-split compatibility graph because a clique represents a set of mutually compatible splits. The split-split compatibility graph of $\T_4$ has fifteen maximal cliques, each of which is represented an edge in the graph. The split-split compatibility graph of $\T_4$ determines how the orthants of $\T_4$ are connected.

\begin{figure}[H]
\centering
\includegraphics[width = 0.5\textwidth]{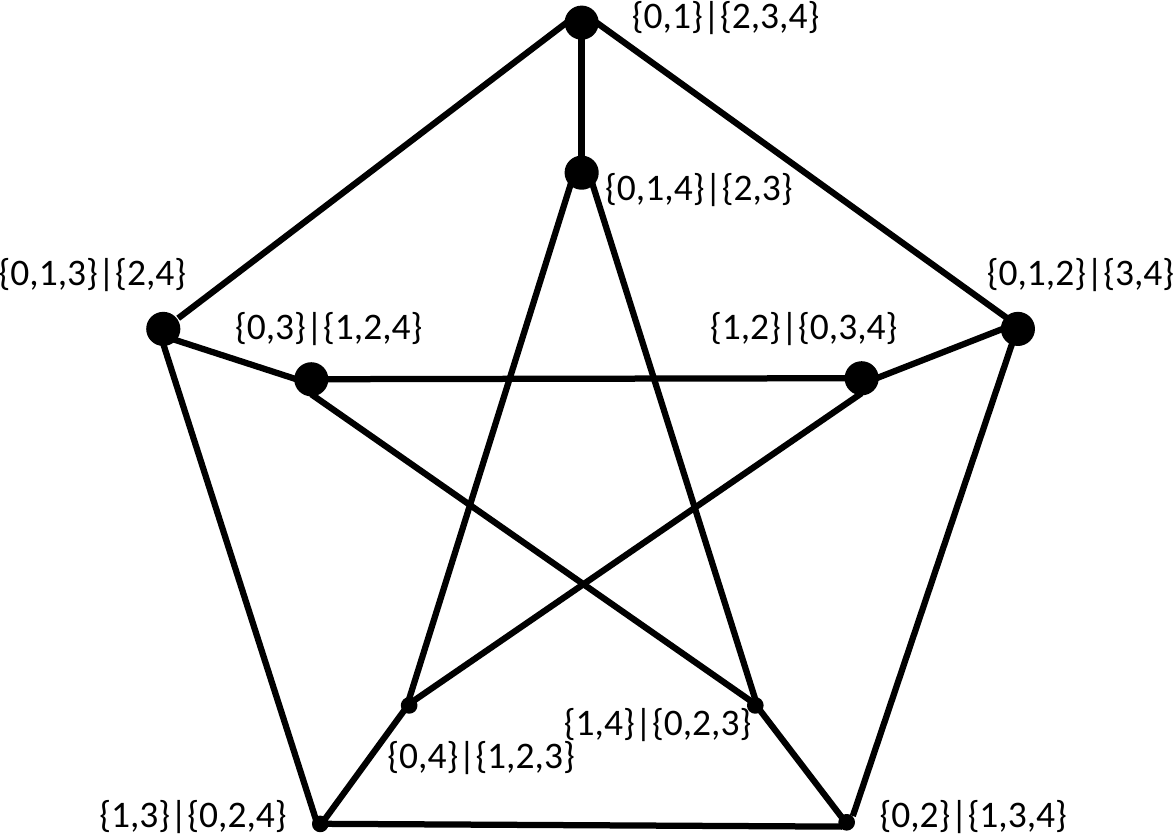}
\caption{Split-split compatibility graph of ${\mathcal T}_4$. Each split has a node. Two splits are compatible if they are connected by an arc. This shows the overall connectivity of $T_4$, all possible splits for $\{0,1,2,3,4\}$, and all possible topologies for 4-trees. Each vertex and the three edges emanating from it in the graph corresponds to a copy of an open book like in Figure \ref{open_book}. Each five-cycle in the graph is a copy of a five-cycle depicted in Figure \ref{five_cycle}}
\label{T_4_link}
\end{figure}

The compatibilty of splits from genetic sequence data in Table \ref{tab:GenSeqData} is given in the link graph in Figure \ref{fig:GenSeqLink}.

\subsection{BHV Treespace geodesics}\label{sec:Geodesics}
We now give an explicit description of geodesics in treespace.
Let $X \in \T_r$ be a variable point and let $T \in \T_r$ be a fixed point.
Let $\Gamma_{XT}= \{\gamma(\lambda)|0\leq \lambda \leq 1\}$ be the geodesic path from $X$ to $T$.
Let $C$ be the set of edges which are compatible in both trees, that is the union of the largest subset of $E_X$ which is compatible with every edge in $T$ and the largest subset of $E_T$ which is compatible with every edge in $X$. 

The following notation for the Euclidean norm of the lengths of a set of edges $A$ in a tree $T$ will be used frequently,
\begin{equation}
||A||_T = \sqrt{ \sum_{e \in A}{|e|_T^2}}
\end{equation}
or without the subscript when it is clear to which tree the lengths are from.

A support sequence is a pair of disjoint partitions, $A_1\cup \ldots \cup A_k =E_X\setminus C$ and $B_1\cup\ldots\cup B_k=E_T \setminus C$.
\begin{thm}\cite{OwenProvan}
A support sequence $(\A,\B)=(A_1,B_1),\ldots,(A_k,B_k)$ corresponds to a geodesic if and only if it satisfies the following three properties:
\begin{itemize}
\item[\rm (P1)] For each $i>j$, $A_i$ and $B_j$ are compatible
\item[\rm (P2)] $\frac{\norm{A_1}}{\norm{B_1}} \leq \frac{\norm{A_2}}{\norm{B_2}} \leq \ldots \leq \frac{\norm{A_{k}}}{\norm{B_{k}}}$
\item[\rm (P3)] For each support pair $(A_i, B_i)$, there is no nontrivial partition $C_1 \cup C_2$ of $A_i$, and partition $D_1 \cup D_2$ of $B_i$, such that $C_2$ is compatible with $D_1$ and $ \frac{\norm{C_1}}{\norm{D_1}} < \frac{\norm{C_2}}{\norm{D_2}}$
\end{itemize}
The geodesic between $X$ and $T$ can be represented in ${\mathcal T}_r$ with legs
\begin{displaymath}
\Gamma_l=\left\{\begin{array}{ll}
\left[\gamma(\lambda):\; \frac{\lambda}{1-\lambda}\leq\frac{\norm{A_1}}{\norm{B_1}}\right],
&l=0\\[.7em]
\left[\gamma(\lambda):\; \frac{\norm{A_i}}{\norm{B_i}}\leq\frac{\lambda}{1-\lambda}\leq\frac{\norm{A_{i+1}}}{\norm{B_{i+1}}}\right],
&l=1,\ldots,k-1,\\[.7em]
\left[\gamma(\lambda):\; \frac{\lambda}{1-\lambda}\geq\frac{\norm{A_k}}{\norm{B_k}}\right],
&l=k\end{array}\right.
\end{displaymath}
The points on each leg $\Gamma_l$ are associated with tree $T_l$ having edge set

\begin{displaymath}
\begin{array}{rcl}
E_l&=&B_1\cup\ldots\cup B_l\cup A_{l+1}\cup\ldots\cup A_k\cup C
\end{array}
\end{displaymath}

Lengths of edges in $\gamma(\lambda)$ are

\begin{displaymath}
|e|_{\gamma(\lambda)}=\displaystyle\left\{\begin{array}{ll}
\frac{(1-\lambda)\norm{A_j}-\lambda \norm{B_j}}{\norm{A_j}}|e|_X&e\in A_j\\[1em]
\frac{\lambda \norm{B_j}-(1-\lambda)\norm{A_j}}{\norm{B_j}}|e|_{T}&e\in B_j\\[1.5em]
(1-\lambda)|e|_X+\lambda |e|_{T}&e\in C\\
\end{array}.\right.
\end{displaymath}

The length of $\Gamma$ is
\begin{equation}\label{pathlength}
d(X,T) = \sqrt{\sum_{l=1}^k{ (\norm{A_l}+\norm{B_l})^2 + \sum_{e \in C}{ (|e|_X-|e|_T)^2}     }}
\end{equation}
and we call this the geodesic distance from $X$ to $T$.
\end{thm}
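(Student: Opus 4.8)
The plan is to exploit the fact, due to Billera, Holmes and Vogtmann, that $\T_r$ is a complete CAT(0) space, so that geodesics exist and are unique and, by the Cartan--Hadamard property, a path is the geodesic precisely when it is locally length-minimizing. I would also first reduce to the case $C=\emptyset$: the common edges contribute an independent Euclidean factor along which any geodesic is the straight segment $(1-\lambda)|e|_X+\lambda|e|_T$, which accounts for the $e\in C$ clause in the edge-length formulas and for the last component of the vector in \myeqref{pathlength}. It then remains to treat trees $X,T$ with disjoint split sets.

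For the ``if'' direction, given $(\A,\B)$ satisfying (P1)--(P3) I would define $\gamma$ by the stated formulas and check, in order: (i) by (P1), each set $E_l=B_1\cup\cdots\cup B_l\cup A_{l+1}\cup\cdots\cup A_k\cup C$ is compatible --- within-$X$ and within-$T$ pairs are automatically compatible, and the cross pairs $B_j$ versus $A_i$ with $i>j$ are exactly those controlled by (P1) --- so $\Or(E_l)$ is a genuine orthant; (ii) on each leg $\Gamma_l$ every coordinate of $\gamma(\lambda)$ is affine in $\lambda$, so $\gamma|_{\Gamma_l}$ is a straight segment inside $\Or(E_l)$, and at the breakpoint $\tfrac{\lambda}{1-\lambda}=\tfrac{\norm{A_l}}{\norm{B_l}}$ the edges of $A_l$ and of $B_l$ all have length $0$ while the remaining coordinates agree on the two sides, so $\gamma$ is continuous; and (iii) differentiating, the squared speed on $\Gamma_l$ equals $\sum_{j=1}^{k}(\norm{A_j}+\norm{B_j})^2$ (plus the $C$-term), which is independent of $l$, so $\gamma$ is a constant-speed path on $[0,1]$ whose length is exactly the right-hand side of \myeqref{pathlength}.

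The substantive step is to show $\gamma$ is locally length-minimizing, the only issue being the breakpoints: at the $l$-th breakpoint $\gamma$ crosses from $\Or(E_{l-1})$ to $\Or(E_l)$ through their shared face, on which $A_l$ and $B_l$ are absent. I would unfold these two orthants along that face into one Euclidean region; local minimality is then equivalent to the incoming and outgoing velocity vectors being collinear across the fold, and a short computation reduces this to $\tfrac{\norm{A_l}}{\norm{B_l}}\le\tfrac{\norm{A_{l+1}}}{\norm{B_{l+1}}}$, i.e.\ to (P2). One must additionally rule out a second kind of local shortcut that re-routes through a different orthant by refining a single support pair: if some $(A_i,B_i)$ admitted a partition $C_1\cup C_2$, $D_1\cup D_2$ with $C_2$ compatible with $D_1$ and $\tfrac{\norm{C_1}}{\norm{D_1}}<\tfrac{\norm{C_2}}{\norm{D_2}}$, then replacing $(A_i,B_i)$ by $(C_1,D_1),(C_2,D_2)$ and reassembling via the same formulas yields a strictly shorter path --- so (P3) is exactly the needed obstruction. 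With uniqueness of CAT(0) geodesics this identifies $\gamma$ with $\Gamma_{XT}$ and proves \myeqref{pathlength} together with the edge-length formulas.

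For the ``only if'' direction, I would start from the unique geodesic $\Gamma_{XT}$ and show that along it each non-common edge of $X$ decreases monotonically to $0$ and then stays $0$, and each non-common edge of $T$ stays $0$ and then increases monotonically --- otherwise a local modification shortens the path. This partitions $[0,1]$ into maximal legs of constant combinatorial type $E_l$; letting $A_i$ (resp.\ $B_i$) be the $X$-edges vanishing (resp.\ $T$-edges appearing) at the $i$-th breakpoint produces a support sequence, and validity of the orthants forces (P1), local minimality at the breakpoints forces the defining ratio equalities together with the monotonicity (P2), and the non-existence of a length-decreasing refinement forces (P3); the edge-length formulas follow because within each leg $\gamma$ is the straight segment between its endpoints on the two bounding faces. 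The step I expect to be the main obstacle is the breakpoint analysis in the ``if'' direction: recognizing that ``locally geodesic'' in the stratified space $\T_r$ becomes, after unfolding adjacent orthants, a clean collinearity condition equivalent to (P2), and that (P3) is precisely the obstruction to the remaining type of local shortcut --- equivalently, that no support sequence failing (P1)--(P3) can yield a locally minimal path.
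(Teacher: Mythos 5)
The paper does not prove this theorem: it is imported verbatim, with the citation \cite{OwenProvan}, from Owen and Provan's work on geodesics in BHV treespace, so there is no ``paper's own proof'' to compare your proposal against. What follows is therefore a review of the proposal on its own merits, in light of what the cited proof actually has to do.

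Your high-level strategy --- use CAT(0) uniqueness, separate off the common-edge Euclidean factor, verify that the stated formulas give a well-defined continuous constant-speed path with length equal to \myeqref{pathlength}, and then characterize local minimality at the breakpoints with (P2) and (P3) serving as the two obstructions to a local shortcut --- is indeed the right shape and is in the spirit of the Owen--Provan argument. The affine-in-$\lambda$ coordinate check, the compatibility of each $E_l$ from (P1), and the reduction to $C=\emptyset$ are all fine.

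The genuine gap is in the step you yourself flag as the main obstacle: ``unfold these two orthants along that face into one Euclidean region; local minimality is then equivalent to the incoming and outgoing velocity vectors being collinear across the fold.'' This is only correct when the breakpoint lies on a codimension-one face of both orthants, i.e.\ when $|A_l|=|B_l|=1$. In general the breakpoint sits on a face of codimension $|A_l|$ in $\Or(E_{l-1})$ and codimension $|B_l|$ in $\Or(E_l)$, and many other orthants also contain that face; the local geometry there is a Euclidean cone over a spherical link, not a fold of two half-spaces, so ``unfolding into one Euclidean region'' is not available. Local minimality through such a cone point is the condition that the incoming and outgoing directions be at spherical distance at least $\pi$ in the link, and showing that this angle condition is equivalent to the combination of (P2) and (P3) --- and in particular that it cannot be violated by a shortcut dipping into a third orthant that meets the same face --- is precisely the hard content of the theorem. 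Your sketch asserts the equivalence but does not supply the argument; without it, the ``if'' direction is incomplete. The ``only if'' direction has a companion gap: the claim that along the geodesic each non-common $X$-edge decreases monotonically to zero and each non-common $T$-edge then increases monotonically is the starting point of Owen--Provan's path-space analysis and itself needs proof (it is where the geodesic is shown to be a ``proper path'' traversing a nested sequence of orthants), rather than being immediate from local minimality.
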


\subsection{Fr\'{e}chet means in BHV Treespace}\label{sec:FMproblem}
For a given data set of $n$ phylogenetic trees in $\T_r$, $T^1, T^2,\ldots,T^n$, 
the \emph{Fr\'{e}chet function} is the sum of squares of geodesic distances from the data trees to a variable tree $X$. 
A geodesic $\gamma:[0,1]\to \T_r$ is the shortest path between its endpoints.
The geodesic from $X$ to $T^i$ is characterized by a geodesic support, $(\A^i,\B^i) = \left ( (A^i_1,B^i_1),\ldots, (A^i_{k^i},B^i_{k^i}) \right )$ \cite{OwenProvan}.
Given the geodesic supports  $(\A^1,\B^1),\ldots, (\A^n,\B^n)$ the Fr\'{e}chet function is
\begin{equation}\label{FrechetFunction}
F(X) = \sum_{i = 1}^n d(X,T^i)^2=\sum_{i = 1}^n \left ( \sum_{l = 1}^{k^i} (\norm{A_l^i}+\norm{B_l^i})^2 + \sum_{e \in C^i} (|e|_X - |e|_{T^i})^2 \right)
\end{equation}

The objective is to solve the Fr\'{e}chet optimization problem
\begin{equation}\label{FrechetOptimization}
\mymin{X \in \T_r}{F(X)}
\end{equation}
Elementary Fr\'{e}chet function properties:
\begin{itemize}
\item The Fr\'{e}chet function is continuous because the geodesic distances $d(X,T^i)$ are continuous \cite{OwenProvan,Miller2015}.
\item The Fr\'{e}chet function $F(X)$ is strictly convex \cite{Sturm}, that is $F\circ \gamma:[0,1] \to \mathbb{R}$ is strictly convex for every geodesic $\gamma(\lambda)$ in $\T_r$. 
\end{itemize}
As a consequence of these properties we have the following result.
\begin{lem}
The Fr\'{e}chet mean exists and is unique.
\end{lem}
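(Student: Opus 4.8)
The plan is to combine the two analytic properties of $F$ just stated (continuity and strict convexity along geodesics) with the metric geometry of $\T_r$. Recall that $\T_r$ is obtained by gluing finitely many Euclidean orthants along common faces, so it is a finite, locally finite polyhedral complex; in particular it is complete, locally compact, and hence \emph{proper} (closed bounded sets are compact), and as a CAT(0) space it is uniquely geodesic.

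For existence I would first establish that $F$ is coercive. Fix the origin (star tree) $0 \in \T_r$. For each data point $T^i$ the triangle inequality gives $d(X,T^i) \geq d(X,0) - d(0,T^i)$, so as soon as $d(X,0) \geq \max_i d(0,T^i)$ we have $F(X) = \sum_{i=1}^n d(X,T^i)^2 \geq \sum_{i=1}^n \big(d(X,0) - d(0,T^i)\big)^2$, which tends to $\infty$ as $d(X,0)\to\infty$. Hence every sublevel set $\{X \in \T_r : F(X) \leq c\}$ is bounded, and it is closed because $F$ is continuous; by properness of $\T_r$ it is therefore compact. Taking $c > F(0)$ gives a nonempty compact sublevel set, on which the continuous function $F$ attains its infimum by the Weierstrass extreme value theorem; since points outside this set have strictly larger $F$-value, the minimizer is global.

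For uniqueness, suppose $X_1 \neq X_2$ are both global minimizers of $F$, and let $\gamma : [0,1] \to \T_r$ be the unique geodesic from $X_1$ to $X_2$. Strict convexity of $F\circ\gamma$ yields $F(\gamma(\tfrac12)) < \tfrac12 F(\gamma(0)) + \tfrac12 F(\gamma(1)) = \min_{\T_r} F$, contradicting minimality. Thus the minimizer is unique, and by definition it is the Fr\'{e}chet mean.

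The only step I would spell out rather than merely assert is the passage from \emph{bounded} sublevel sets to \emph{compact} ones, i.e. properness of $\T_r$: this follows from $\T_r$ being a finite-dimensional, locally finite polyhedral complex (hence complete and locally compact), so the Hopf--Rinow theorem for geodesic metric spaces applies. Everything else is routine once the CAT(0) structure — uniqueness of geodesics and strict convexity of $F$ along them — is taken as given, so I expect no genuine obstacle here; the lemma is essentially a packaging of standard facts.
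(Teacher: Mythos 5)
Your proof is correct and is substantially more rigorous than the paper's, which takes a much terser (and somewhat loose) route. The paper's entire argument is the two-sentence observation that a strictly convex function either has a unique minimizer or ``can be made arbitrarily low,'' together with the remark that since the data trees are at finite distance the minimizer ``must also be finite.'' That dichotomy is not literally true for strictly convex functions on unbounded domains (e.g.\ $e^x$ on $\mathbb{R}$ is strictly convex, bounded below, with no minimizer), so as stated the paper's existence argument has a gap that your coercivity computation actually fills. Concretely, you differ in three places: you explicitly prove coercivity via the reverse triangle inequality $d(X,T^i)\geq d(X,0)-d(0,T^i)$; you invoke properness of $\T_r$ (Hopf--Rinow for a complete, locally compact length space) to pass from bounded closed sublevel sets to compact ones and then apply Weierstrass for existence; and you give the standard midpoint argument along the unique CAT(0) geodesic for uniqueness, whereas the paper leaves uniqueness implicit in the ``strictly convex $\Rightarrow$ unique minimizer'' claim. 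The trade-off is exactly what you would expect: the paper's version is shorter and relies on the reader to supply the coercivity and compactness reasoning, while yours is self-contained and makes clear which metric-geometric hypotheses (completeness, local compactness, unique geodesics, geodesic strict convexity) are actually being used. Your version is the one I would keep.
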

\begin{proof}
A strictly convex function either has a unique minimizer or can be made
arbitrarily low. 
Assuming that the data points are finite, 
then a minimizer of the Fr\'{e}chet function must also be finite.
Therefore the Fr\'{e}chet function has a unique minimizer.  
\end{proof}
\begin{defn}
The Fr\'{e}chet mean, $\bar{T}$, is the unique minimizer of the Fr\'{e}chet function. 
\end{defn}

\subsection{Vistal subdivision of treespace}\label{sec:VistalCells}

The  value of 
the Fr\'{e}chet function at $X$
depends on the geodesics from $X$ to each of the data trees.
Treespace can be subdivided
into regions where the combinatorial form of geodesics from $X$ to the data
trees are all fixed.
Given a source tree $T$, a vistal facet is a region of treespace, $\V$, where a fixed support is valid
for the geodesic from any tree $X$ in $\V$ to $T$.
\begin{defn}
\cite[Def. 3.3]{Miller2015} Let $T$ be a tree in $\T_r$. Let $\Or$
be a maximal orthant containing $T$. The \emph{previstal facet}, ${\cal V}(T,\Or ; \A, \B)$, is the set of variable trees, $X$ $\in \Or$, for which the geodesic joining $X$ to $T$ has support $(\A,\B)$ satisfying $(P2)$ and
$(P3)$ with strict inequalities.
\end{defn}
\noindent A pre-multi-vistal facet is an intersection of previstal facets of $T^1,\ldots,T^n$.
Pre-multi-vistal facets are regions where the Fr\'{e}chet function can be represented
with a fixed algebraic form.
Analysis of the differential properties of the Fr\'{e}chet function at points on shared faces of pre-multi-vistal facets is important
for Thm. \ref{thm:DirDerDecomposition}.
For deeper analysis of the geometry and combinatorics of vistal facets see \cite{Miller2015}.

\section{Problem Discussion}\label{sec:discussion}
The Fr\'{e}chet optimization problem, in BHV Treespace, requires both selecting the minimizing tree topology and specifying its edge lengths.
Tree topologies are discrete and so the problem of selecting the minimizing tree topology is a combinatorial optimization problem; 
however it is possible to make search strategies which take advantage of the continuity of BHV Treespace to find
the correct tree topology. 
It is natural to consider this problem in two modes of search: global i.e. strategies which change the topology and edge lengths; and local i.e. strategies which only adjust edge lengths. 
One motivation to consider global search and local search separately is that the local optimization problem
is convex optimization constrained to a Euclidean orthant.

The global search problem is challenging because 
the geometry of treespace creates difficulty 
in two essential parts of optimization (1) making progress towards optimality and (2)
verifying optimality. 
In treespace a metrically small neighborhood can actually be quite large in a certain sense. 
In constructing the space, the topological identification of the shared faces of orthants may create points in the closure of many orthants. 
In terms of trees, the neighborhood around a tree $X$, $N(X)$, is comprised
not only of trees with the same topology as $X$ but also
trees which have $X$ as a contraction. However, the list of tree topologies which have a particular tree $X$ as a contraction can be quite large. For example, if $X$ is a star tree then $X$ is a contraction of any tree i.e. $X$ is a contraction of $(2r-3)!!$ maximal phylogenetic tree topologies. 

Local optimality conditions for non-differentiable functions
are based on the rate of change of the objective function along directions issuing from
a point. 
Since the neighborhood of a point $X$ contains
all trees which have $X$ as a contraction verifying 
that $X$ is optimal requires demonstrating that
any tree which contains $X$ as a contraction
has a larger Fr\'{e}chet function value.
For example, when $X$ is a star tree, $N(X)$ contains every tree
with the same leaf edges as $X$, and having infinitesimal interior edge lengths. In this sense finding a descent direction
can be essentially as hard as finding the topology of the Fr\'{e}chet mean itself.
Although exhaustive search among all possible tree topologies
will eventually yield the Fr\'{e}chet mean, there are more practical approaches.

Proximal point algorithms, a broad class of algorithms, are globally convergent
not only for the Fr\'{e}chet optimization problem, but are globally 
convergent for any well defined lower-semicontinuous convex optimization problem 
on a globally non-positively curved metric space \cite{bacak2013proximal}.
A globally non-positively curved metric space has a unique shortest path, called a geodesic, between any pair of points.
This class of algorithms has nice theoretical properties and certain implementations of proximal point algorithms are practical for the Fr\'{e}chet optimization problem
on globally non-positively curved orthant spaces. Orthant spaces are generalizations of treespace where the link at the origin can be an arbitrary graph, rather than a graph encoding valid phylogenetic trees \cite[Sec. 6.3]{Miller2015}.

Proximal point algorithms are applicable to optimization problems on metric spaces. The general problem is minimizing a function $f$ on a metric space $M$ with distance function $d:M\times M \to \mathbb{R}$.
A proximal point algorithm solves a sequence of penalized optimization problems
of the form
\begin{align}
\begin{displaystyle}
P_k(f): \min_{x^{k}}{ \quad f(x^k)+\alpha_k d^2(x^{k-1},x^k)}
\end{displaystyle}
\end{align}
where $\alpha_k$ influences the proximity of a solution to the point $x^{k-1}$.
Some good references for proximal point algorithms are \cite{bacak2014convex,Bertsekas2011,Li2009,Rockafellar1976}.

Global methods for optimizing the Fr\'{e}chet function i.e. methods which 
can move from one orthant of treespace to another have been shown to converge \cite{Bacak,Sturm,Miller2015}.

Implementing a generic proximal point algorithm to minimize the Fr\'{e}chet function on treespace does not seem advantageous. In particular, given a non-optimal point $X^0$ finding a point $X$ such that $F(X) < F(X^0)$ is not made any easier
by penalizing  the objective function $F(X)+\alpha d^2(X^0,X)$.
Penalizing the objective function with $\alpha d(X^0)$ does not 
provide any additional structure for checking the neighborhood of $X^0$, $N(X^0)$, for a descent direction.

Split proximal point algorithms avoid directly tackling the complicated problem
of minimizing $F(X)$ by solving many much easier subproblems. 
For objective functions which can be expressed as a sum of functions, $f = f^1+\ldots +f^m$, a split proximal point algorithm alternates among penalized optimization problems for each function. 
Let $\{1,2,\ldots,m\}$ index the functions $f^1,\ldots,f^m$. 
A generic split proximal point algorithm is:
choose some sequence $i_1,i_2,\ldots$ where each 
term in the sequence is an element of $\{1,2,\ldots,m\}$ and sequentially solve
the split proximal point optimization problem:
\begin{align}
\begin{displaystyle}
P_k(f^{i_k}): \min_{x^{k}}{ \quad f^{i_k}(x^k)+\alpha_k d^2(x^{k-1},x^k)}
\end{displaystyle}
\end{align}
Different versions of split proximal point algorithms are based on
the choice of the sequence $i_1,i_2,\ldots$ and the choice of the sequence $\{\alpha_k\}$. Naturally, a split proximal point procedure can be applied to the Fr\'{e}chet optimization problem by separating the Fr\'{e}chet function into a sum of squared distance functions, $F(X) = d^2(X,T^1)+\ldots +d^2(X,T^n)$.
For the Fr\'{e}chet function the split proximal point optimization problem is
\begin{align}
\begin{displaystyle}
P_k(d^2(X^k,T^{i_k})): \min_{x^{k}}{ \quad d^2(X^k,T^{i_k})+\alpha_k d^2(X^{k-1},X^k)}
\end{displaystyle}
\end{align}
For the Fr\'{e}chet mean optimization problem on a globally non-positively curved metric space, the solution to a split proximal point optimization problem 
can be obtained easily in terms of geodesics.
The solution to  $P_k(d^2(X^k,T^{i_k}))$
must be on the geodesic between $X^{k-1}$ and $T^{i_k}$.
The term $d^2(X^k,T^{i_k})$ is the squared distance from the 
variable point to $T^{i_k}$ and the term $d^2(x^{k-1},x^k)$ is the squared
distance from the search point to $X^{k-1}$. Given
any point, there is at least one point
on the geodesic between $X^{k-1}$ and $T^{i_k}$ for
which the value of both terms is at least as small.
Since $X^k$ must be on this geodesic, the distance from $X^k$ to $X^{k-1}$
and the distance from $X^k$ to $T^{i_k}$ can be parameterized
in terms of  the proportion, $t:0 \leq t \leq 1$, along the geodesic from $X^{k-1}$ to $T^{i_k}$:  $d(X^k,T^{i_k})=(1-t)d(X^{k-1},T^{i_k})$ and $d(X^k,X^{k-1})=td(X^{k-1},T^{i_k})$.
Parameterizing $d^2(X^k,T^{i_k})+\alpha_k d^2(X^{k-1},X^k)$
in terms of $t$ makes $P_k(d^2(X^k,T^{i_k}))$ into a problem
of minimizing a quadratic function in $t$.
The optimal step length is $t = \frac{\alpha}{1+\alpha}$.
An analogous formulation of SPPA can be made for the Fr\'{e}chet Median problem.
Even more importantly, several versions of split proximal point algorithms have been shown to converge globally to the Fr\'{e}chet mean \cite{bacak2014computing}.  

One strategy for minimizing the Fr\'{e}chet function is use a split proximal point algorithm for global
search and switch to a local search procedure.
The motivation for switching to a local search procedure 
is that if local search is initialized close to the optimal solution then faster convergence can be achieved.
The local optimization problem is minimizing the Fr\'{e}chet function in a 
fixed orthant $\Or$. 
One feature of the local optimization problem is that the Fr\'{e}chet function is $C^\infty$ in the interior of each pre-multi-vistal cell, but not at points on shared faces of pre-multi-vistal facets.
The Fr\'{e}chet function is $C^1$ only when restricted to the interior of a maximal orthant. 
An analysis of differential properties of the Fr\'{e}chet function is presented in Sec. \ref{sec:DiffAnalysis}.

\section{Decomposition and Relative Optimality Theorems}\label{sec:main_results}

The main analytical results related to minimizing $F(X)$
for a given a set of trees $T^1,...,T^n$ in $\T_r$ are presented in this section, and proofs are presented in Sec. \ref{sec:DiffAnalysis}. 
Each tree $T^i$ induces pre-vistal facets on treespace and taken together the collection subdivides treespace into pre-multi-vistal facets where the Fr\'{e}chet function can be represented in a fixed form. On the shared faces of pre-vistal facets the Fr\'{e}chet function can be represented in multiple valid forms. 
At such points the value of the Fr\'{e}chet function and gradient are the same, but higher order derivatives can differ depending on which representation of the Fr\'{e}chet function is used. 
Thus careful consideration of the differential properties of the Fr\'{e}chet function at such points is necessary for the Directional Derivative Decomposition Theorem (Thm. \ref{thm:DirDerDecomposition})

\subsection{Decomposition Theorem}
Let $X$ and $Y$ be points in $\T_r$ such that $X$ and $Y$ share a pre-multi-vistal facet defined by geodesics from $X$ to $T^1,...,T^n$, $\V(T^1,\Or;\A^1,\B^1) \cap \ldots \cap \V(T^n,\Or;\A^n,\B^n)$.
If this is the case, then either (i) $X$ and $Y$ have the same topology, (ii) $X$ is a contraction of $Y$ or (iii) $Y$ is a contraction of $X$.
Assume that if the topologies of trees $X$ and $Y$ differ then $X$ is a contraction of $Y$, that is $\Or(X) \subseteq \Or(Y)$. 
Let $\Gamma(X,Y;\alpha)$,
where $0 \leq \alpha \leq 1$, be the point $\alpha$ proportion along the geodesic from $X$ to $Y$.
\begin{defn}\label{def:DirDer}
The \emph{directional derivative from $X$ to $Y$} is 
\begin{align}
F'(X,Y) =&
\lim_{\alpha \to 0}\frac{F(\Gamma(X,Y;\alpha))-F(X)}{\alpha}
\end{align}
\end{defn}

\begin{defn}
Let $\Or^\perp (X)$ be the orthogonal space to $\Or(X)$ at $X$, that is the union of all orthogonal spaces in all orthants containing $\Or(X)$.
\end{defn}
An example of an orthogonal space can be seen in the half-open book with three pages illustrated in Fig. \ref{open_book}. Consider a point $X$ on the spine of the half-open book, labeled $\{0,1\}|\{2,3,4\}$. The orthogonal space at $X$ in one page of the book is a copy of the positive real line, and the complete orthogonal space at $X$ is three copies of the positive real line, one copy identified with each page of the open book.

Thm. \ref{thm:DirDerDecomposition} states that the value of the directional derivative can be decomposed into
a contribution from the change in $F(X)$ resulting in adjusting positive
length edges in $X$, and a contribution from the 
change in $F(X)$ resulting in increasing the lengths of edges
from zero. 

\begin{thm}\label{thm:DirDerDecomposition}
(Decomposition Theorem for Directional Derivatives)
Let $X,Y\in \T_r$, with $\Or(X)\subseteq \Or(Y)$ and with $X$ and $Y$ in a common multi-vistal cell, $V_{XY}$, let $Y_X$ be the projection of $Y$ onto $\Or(X)$,
and let $Y_\perp$ be the projection of $Y$ onto $\Or^\perp(X)$ at $X$.
Then,
\begin{align}
F'(X,Y) = F'(X,Y_X)+F'(X,Y_\perp)
\end{align}
\end{thm}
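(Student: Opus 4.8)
\emph{Proof plan.} The plan is to replace $F$ near $X$ by a single fixed algebraic expression, observe that the three geodesics in play are straight segments, and then read off the decomposition by differentiating that expression term by term.

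First I would fix coordinates and reduce to straight lines. Put $N:=E_Y\setminus E_X$. Since $\Or(X)\subseteq\Or(Y)$, the points $X,Y,Y_X,Y_\perp$ all lie in the single Euclidean orthant $\Or(Y)$, so each of $\Gamma(X,Y;\cdot)$, $\Gamma(X,Y_X;\cdot)$, $\Gamma(X,Y_\perp;\cdot)$ is the straight segment joining its endpoints in $\Or(Y)$ (the middle one staying in the face $\Or(X)$). Write $p_e:=|e|_Y-|e|_X$ for the edge-length displacement and decompose $p=p_\parallel+p_\perp$ orthogonally, $p_\parallel$ supported on $E_X$ and $p_\perp$ supported on $N$ (note $p_e=|e|_Y>0$ for $e\in N$). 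From the definitions of the two projections, $Y_X=X+p_\parallel$ and $Y_\perp=X+p_\perp$, so the three geodesics are $t\mapsto X+tp$, $t\mapsto X+tp_\parallel$, $t\mapsto X+tp_\perp$, with initial velocities $p$, $p_\parallel$, $p_\perp$ respectively.

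Next, let $(\A^i,\B^i)$ and $C^i$ be the geodesic supports carried by the common multi-vistal cell $V_{XY}$, and let $\Phi$ be the associated fixed algebraic form of $F$, $\Phi(x)=\sum_{i}\big[\sum_{l}(\norm{x_{A^i_l}}+\norm{B^i_l})^2+\sum_{e\in C^i}(x_e-|e|_{T^i})^2\big]$, with the conventions $x_e=0$ for $e\notin E_Y$ and $|e|_{T^i}=0$ for $e\notin E_{T^i}$; this is a fixed, convex, piecewise-$C^\infty$ function on $\Or(Y)$. The crux of the proof is that, for the purpose of these one-sided derivatives at $X$, $F$ may be replaced by $\Phi$ (suitably specialized along the ray into $\Or(X)$, where each pair with $A^i_l\subseteq N$ contracts away and its $B^i_l$ passes into $C^i$, the remaining $A^i_l$ becoming $A^i_l\cap E_X$): one must know that $F$ coincides with $\Phi$ in a one-sided neighborhood of $X$ along all three geodesics. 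I expect this to be the main obstacle. It is \emph{not} automatic: the three segments are straight in $\T_r$ but only degree-$2$ in squared coordinates, and the initial arc of a straight segment can leave the polyhedral cone $V_{XY}$ even though both endpoints lie in it. Making it work uses the description of (multi-)vistal cells as convex polyhedral cones in squared treespace and the behavior of geodesic supports under edge contraction; concretely one must control which of the (O), (P2), (P3) inequalities are active at $X$ and check they are not violated to first order along $p$, $p_\parallel$, $p_\perp$ — equivalently, that the supports of the cells the three arcs actually enter agree, at $X$ and after the evident specialization, with those of $V_{XY}$.

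Granting that, the decomposition is a routine termwise computation of $\Phi'(X;d)$ for a feasible direction $d$ (meaning $d_e\ge 0$ for $e\in N$), classifying the summands by how $A^i_l$ meets $E_X$. If $A^i_l\cap E_X\ne\emptyset$, then $x\mapsto\norm{x_{A^i_l}}$ is $C^\infty$ near $X$ (its radicand is $\ge\norm{x_{A^i_l\cap E_X}}_X^2>0$), and since the coordinates of $A^i_l\cap N$ vanish at $X$ they enter the radicand only quadratically; hence this term's derivative at $X$ is $2(\norm{x_{A^i_l\cap E_X}}_X+\norm{B^i_l})\sum_{e\in A^i_l\cap E_X}|e|_X d_e\,/\,\norm{x_{A^i_l\cap E_X}}_X$, depending on $d$ only through $d|_{E_X}$. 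If $A^i_l\subseteq N$, then $\norm{x_{A^i_l}}_X=0$, the one-sided derivative of $\norm{x_{A^i_l}}$ is $\norm{d_{A^i_l}}$, and the term contributes $2\norm{B^i_l}\norm{d_{A^i_l}}$, depending on $d$ only through $d|_N$. The $C^i$-terms split the same way: $e\in E_X$ gives $2(|e|_X-|e|_{T^i})d_e$, $e\in N$ gives $-2|e|_{T^i}d_e$. Thus $\Phi'(X;d)$ is a function of $d|_{E_X}$ vanishing for $d|_{E_X}=0$ plus a function of $d|_N$ vanishing for $d|_N=0$; specializing $d$ (first $d|_N=0$, then $d|_{E_X}=0$) identifies these with $\Phi'(X;d|_{E_X})$ and $\Phi'(X;d|_N)$, so $\Phi'(X;d)=\Phi'(X;d|_{E_X})+\Phi'(X;d|_N)$ for every feasible $d$. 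Taking $d=p$ and using $F=\Phi$ along the three geodesics gives $F'(X,Y)=\Phi'(X;p)=\Phi'(X;p_\parallel)+\Phi'(X;p_\perp)=F'(X,Y_X)+F'(X,Y_\perp)$. The one genuine idea in this last part is that new-edge coordinates, being zero at $X$, contribute nothing at first order to the smooth mixed terms $(\norm{x_{A^i_l}}+\norm{B^i_l})^2$ with $A^i_l\cap E_X\ne\emptyset$, so their entire infinitesimal effect is carried by $F'(X,Y_\perp)$, while $F'(X,Y_X)$ sees only the rescaling of the already-positive edges.
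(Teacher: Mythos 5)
Your proposal takes essentially the paper's route. Both proofs compute the one-sided directional derivative term by term from the fixed algebraic form of $F$ carried by the common multi-vistal cell and observe that every summand depends either only on $d|_{E_X}$ (support pairs with $A^i_l\cap E_X\neq\emptyset$ and old common edges) or only on $d|_{E_Y\setminus E_X}$ (support pairs with $A^i_l\subseteq E_Y\setminus E_X$ and new common edges), so specializing the direction gives the split; the paper reaches the same end by citing Cor.~\ref{cor:DirDerTangentSpace} for $F'(X,Y_X)$ and the explicit formula of Thm.~\ref{thm:DirDerValue} for $F'(X,Y)$ and $F'(X,Y_\perp)$ and comparing the expressions, whereas you rederive the termwise formulas inline. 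The computations coincide.

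The step you flag as the main obstacle — whether $F$ really equals the cell's fixed form $\Phi$ in a one-sided neighborhood of $X$ along the three rays, given that straight segments in $\T_r$ become quadratic arcs in squared treespace and can exit the convex cone $V_{XY}$ — is a genuine subtlety, and the paper's proof of this theorem does not confront it directly either. It is discharged upstream rather than by a cone-membership argument: Lem.~\ref{lem:DDWellDefined} and Lem.~\ref{lem:DirDerContinuous} show the one-sided derivative at $X$ is well defined and can be computed from any support sequence valid at $X$, so it does not matter which cell the ray $X+\alpha p$ actually enters for small $\alpha>0$; and Lem.~\ref{lem:LocalSupportPairsComposition} together with Cor.~\ref{cor:LocalSupportPairs} identify the local support pairs for the geodesics to $Y$ and to $Y_\perp$. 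So you were right to suspect that ``Granting that\ldots'' needs justification; the justification is the well-definedness of the directional derivative across vistal boundaries, which you would need to import or reprove, but once it is granted your termwise calculation and the specialization argument are correct and agree with the paper's.
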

\noindent See Sec. \ref{sec:DiffAnalysis} for proof and see supplemental material for an example.

\subsection{Orthant optimization}

Consider a variable tree $X \in \T_r$ and a fixed set of compatible edges $E$. 
The goal is to minimize the Fr\'{e}chet function but under the restriction that 
the topology of $X$ may only have edges from $E$.
Under this restriction, the geometric location of $X$ is restricted to the orthant defined by the set of edges $E$, $\Or(E)$. 
As the edge lengths of $X$ vary the geodesic from $X$ to $T^i$ will also vary, 
and the support sequence $(\A^i,\B^i)=(A^i_1,B^i_1),\ldots,(A^i_{k^i}, B^i_{k^i})$ will change whenever $X$ crosses
the boundary of a vistal cell. 
Local search can be formulated as the following convex optimization problem.

\setcounter{equation}{0}\label{opt:P1}
\noindent Objective
\begin{align}
\min \quad
& F(X) =\sum_{i=1}^n \left(\sum_{l=1}^{k^i} (\norm{A^i_l}+\norm{B^i_l})^2 + \sum_{e \in C^i} (|e|_X-|e|_{T^i})^2\right)
\end{align}
Constraints
\begin{align}
|e|_X & \geq 0 && \forall e \in E
\end{align}
The minimizer of this optimization problem, $X^*$, satisfies $F(X^*)\leq F(X)$ for all $X$ in $\Or(E)$.

\subsection{Optimality Qualifications}\label{sec:Optimality Qualifications}
There are two cases for the optimal solution $X^*$: either every edge in $X^*$ has a positive length or at least one edge in $X^*$ does not. 
If every edge of $X^*$ has positive length, then $X=X^*$ if and only if $\nabla F(X)=0$ because the Fr\'{e}chet function is continuously differentiable in the interior of $\Or(E)$. 
The optimality condition for a point
on a lower dimensional face of treespace can be expressed in terms of directional derivatives. In that case the optimality condition is
\begin{align}\label{eq:simpleOptCond}
F'(X,Y) & \geq 0  & \textrm{for all $Y$ such that $\Or(X) \subseteq \Or(Y)$}
\end{align}

By using Thm. \ref{thm:DirDerDecomposition} to separate the directional derivative into the contribution from the component of $Y$ in $\Or(X)$ and the component of $Y$ which is perpendicular to $\Or(X)$ the optimality condition can be expanded into a conditions on these independent terms, and then simplified by taking advantage of the ability to express $F’(X,Y_X)$ in terms of the gradient of $F$ at $X$ relative to $\Or(X)$.
The following optimality conditions are expressed in terms of the Fr\'{e}chet function (Def. \ref{def:FrGrad}) and the directional derivative of the Fr\'{e}chet Function (Def. \ref{def:DirDer}).
Let $[\nabla F(X)]_e$ denote the partial derivative of the Fr\'{e}chet function with respect to edge $e$, $\frac{ \partial F(X)}{\partial e}$, which is well defined when $|e|_X>0$. 
\begin{align} 
[\nabla F(X)]_e& = 0 &
\textrm{ for all $e: |e|_X > 0$} \label{decompOptCond1}\\
F'(X,Y) & \geq 0 & \textrm{for all $Y$ in $\Or(E)$ such that the component of $Y$ in $\Or(X)$ is 0} \label{decompOptCond2}
\end{align}
The local search problem i.e. identifying the minimizer
of the Fr\'{e}chet function on an orthant of treespace, $\Or(E)$, must have a unique solution because the Fr\'{e}chet function is strictly convex and $\Or(E)$ is a convex set.
Also, optimality conditions for the local search problem 
are only different from global optimality conditions in one aspect, 
which is that rather than requiring $F'(X,Y)\geq 0$ for all
$Y$ perpendicular to $\Or(X)$, it is only necessary
to consider the subset of such points, $Y$, which are in $\Or(E)$.

\subsection{Verifying optimality}
The focus of this section is developing an algorithm to answer the following question:
Given a point $X$ on a lower dimensional face of an orthant, $\Or(E)$, does there exist $Y$ such that $F'(X,Y)<0$?
To develop an algorithm to answer this question, the mathematical conditions of optimality from Sec. \ref{sec:Optimality Qualifications} are simplified into a collection of nested conditions. 

The decomposed local optimality conditions, Cond. (\ref{decompOptCond1}-\ref{decompOptCond2}) state: partial derivatives with respect to positive length edges must be zero, and directional derivatives for directions which introduce new edges must be non-negative, respectively.
Reformulating Cond. (\ref{decompOptCond2}) as an optimization problem will lead to a simplified set of optimality conditions. 
Bounding the minimum directional derivative below by zero, as in the following optimization problem, is equivalent to bounding all directional derivatives below by zero.
\begin{align}
f^* \geq 0,
\end{align}
where
\begin{align}
f^*=& \min_{\begin{array}{c} Y \neq X \\ (Y-X)\perp \Or(X) \end{array}} F'(X,Y)
\end{align}
The directional derivative of the Fr\'{e}chet function, $F'(X,Y)$ is a positively homogeneous function. Since detecting a negative directional derivative is the goal, it is sufficient to bound the directional derivative below on the intersection of a sphere around $X$ and the orthant $\Or(E)$. Here we choose to restrict the search domain to the unit simplex around $X$. Restriction to this polyhedral set is advantageous because $F'(X,Y)$ is a convex function of $Y$, Lem. \ref{lem:DirDerConvex}. Therefore, it is convex over a polyhedral subset of its domain.
Let $P$ be the vector of differences of edge lengths between $Y$ and $X$ with the component for edge $e$ having value $p_e = |e|_Y-|e|_X$. 
\begin{align}
f^*_1 = \text{ min }  & F'(X,Y) \label{minDirDer1}\\
\textrm{ s.t. } & \sum_{e \in E} p_e =1\label{minDirDer2}\\
& p_e = 0 \text{ for all $e$ s.t. $|e|_X > 0$}\label{minDirDer3}\\
& p_e \geq 0 \text{ for all $e$ s.t. $|e|_X = 0$}\label{minDirDer4}
\end{align}
The directional derivative is a once continuously differentiable function of $Y$ with respect to edges such that $p_e > 0$. However, this is generally not the case when $p_e=0$. Optimality qualifications for the optimization problem defining $f^*_1$ need to account for non-differentiability. 
Thm. \ref{thm:DirDerOfDirDer}, in the same spirit as the Decomposition Theorem for Directional Derivatives, Thm. \ref{thm:DirDerDecomposition}, provides a decomposition for directional derivatives of directional derivatives. 
The directional derivative, $F'(X,Y)$, is a $C^1$ function of $Y$ in $\Or(Y)$, as shown in Lem. \ref{lem:DirDerC1}. Therefore, when the positive edge lengths of $Y$ vary, the change in $F'(X,Y)$ can be quantified by partial derivatives of $F'(X,Y)$. 
The next theorem describes how the value of $F'(X,Y)$ changes in the case when lengths of edges in $Y$ are increased from zero.

\begin{thm}\label{thm:DirDerOfDirDer}
Let $X$, $Y$, and $Y'$ be three points in treespace such that $X$ is a contraction of $Y$ and $Y$ is a contraction of $Y'$, that is $\Or(X) \subset \Or(Y) \subset \Or(Y')$. 
Let $Y^\alpha=\Gamma(Y,Y';\alpha)$.
Let $Y'_{\perp \Or(Y)}$ be the component of $Y'$ which is perpendicular to the orthant containing $Y$, $\Or(Y)$. 
When situated as described above, the directional derivative of the directional derivative, $F'(X,Y)$, when $Y$ varies in the direction of $Y'$, that is 
\begin{align}
\lim_{\alpha \to 0} \frac{F'(X,Y^\alpha)-F'(X,Y)}{\alpha},
\end{align}
is equal to the sum of a term contributed by changes which are parallel to the axes of $\Or(Y)$ and the directional derivative of the Fr\'{e}chet function at $Y$ in the direction of $Y'_{\perp \Or(Y)}$, $F'(Y,Y'_{\perp \Or(Y)})$.
The following expression gives a decomposition of the rate of change in the directional derivative into a term contributed from changing positive length edges, $|e|_Y>0$ and a term contributed by increasing zero length edges, $|e|_Y=0$. This decomposition is
\begin{align}
\lim_{\alpha \to 0} \frac{F'(X,Y^\alpha)-F'(X,Y)}{\alpha}=\sum_{e\in E_Y}{(|e|_{Y'}-|e|_Y)}[\nabla F'(X,Y)]_e+F'(Y,Y'_{\perp \Or(Y)}).
\end{align}
\end{thm}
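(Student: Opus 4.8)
The plan is to recognize the claimed identity as the output of the Decomposition Theorem (Thm.~\ref{thm:DirDerDecomposition}) applied a \emph{second} time --- now to the function $G:=F'(X,\cdot)$, at the base point $Y$ and in the direction of $Y'$ --- after observing that $G$ shares the structural form of the Fr\'echet function. First I would fix coordinates by partitioning the edge set into $E_X$ (positive in $X$), $M:=E_Y\setminus E_X$ (grown from $X$ to $Y$), and $N:=E_{Y'}\setminus E_Y$ (grown from $Y$ to $Y'$). Since $\Or(Y)\subset\Or(Y')$, the geodesic $Y^\alpha=\Gamma(Y,Y';\alpha)$ is the straight segment inside $\bar{\Or}(Y')$, so $|e|_{Y^\alpha}$ is affine in $\alpha$ for $e\in E_Y$ and equals $\alpha|e|_{Y'}$ for $e\in N$; for $\alpha$ small enough $X$, $Y$ and $Y^\alpha$ all lie in a common multi-vistal cell, which I fix for the rest of the argument. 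The limit defining the quantity in question exists because $\alpha\mapsto F'(X,Y^\alpha)$ is convex along the geodesic $Y^\alpha$ (Lem.~\ref{lem:DirDerConvex}), hence has a one-sided derivative at $0$.

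Next I would decompose at every $\alpha$. Applying Thm.~\ref{thm:DirDerDecomposition} with base $X$ and target $Y^\alpha$ (legitimate since $\Or(X)\subseteq\Or(Y^\alpha)$ and both lie in the fixed cell) gives $F'(X,Y^\alpha)=F'(X,(Y^\alpha)_X)+F'(X,(Y^\alpha)_\perp)$, with $(Y^\alpha)_X$, $(Y^\alpha)_\perp$ the projections onto $\Or(X)$ and $\Or^\perp(X)$. The first summand is the smooth directional derivative of $F$ inside $\Or(X)$, namely $\langle\nabla F(X),(Y^\alpha)_X-X\rangle$ on the $E_X$-coordinates; it is affine in $\alpha$, so its $\alpha$-derivative at $0$ is $\sum_{e\in E_X}(|e|_{Y'}-|e|_Y)[\nabla F(X)]_e$, and since $F'(X,\cdot_\perp)$ depends only on the $M\cup N$-coordinates of its argument, these are exactly the $E_X$-components of the gradient $\nabla F'(X,Y)$ on $\Or(Y)$ (well-defined by Lem.~\ref{lem:DirDerC1}). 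For the second summand I would use the explicit geodesic formula \cite{OwenProvan} together with the analysis already carried out in the proof of Thm.~\ref{thm:DirDerDecomposition} to exhibit $F'(X,Z_\perp)$, for $Z_\perp$ ranging over $\Or^\perp(X)\supseteq\Or(M\cup N)$ near the point $P^0$ whose $M$-coordinates are $|e|_Y$ and whose $N$-coordinates are $0$, in the form $\ell(Z_\perp)+\sum_i\sum_l 2\norm{B^i_l}\,\norm{(|e|_{Z_\perp})_{e\in A^i_l}}$ with $\ell$ linear and each $A^i_l\subseteq M\cup N$. Thus $G$ restricted to $\Or^\perp(X)$ is again a sum of a linear term and seminorms --- the same structural form as a sum of squared distances --- so the Decomposition Theorem applies verbatim with $\Or(M)$ replacing $\Or(X)$ and $\Or(M\cup N)$ replacing $\Or(Y)$, showing that the $\alpha$-derivative at $0$ of $F'(X,(Y^\alpha)_\perp)$ equals $\sum_{e\in M}(|e|_{Y'}-|e|_Y)[\nabla F'(X,Y)]_e + D$, where $D$ is the directional derivative of $G$ at $P^0$ in the direction raising the $N$-coordinates from $0$.

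The central step is then the identification $D=F'(Y,Y'_{\perp\Or(Y)})$. Here I would argue that near $P^0$ the only support pairs of the geodesics $\Gamma(\,\cdot\,,T^i)$ whose $A$-part meets $N$ are \emph{leading} pairs with $A$-part contained entirely in $N$: a pair mixing an $N$-edge with an $M$-edge of fixed positive length $\approx|e|_Y$ (or with an old incompatible edge of $E_X$) has ratio $\norm{A}/\norm{B}$ bounded away from $0$, whereas an $N$-only pair has ratio $O(\norm{(|e|_{Z_\perp})_{e\in N}})$, so property (P2) pushes the $N$-edges into strictly earlier pairs and property (P3) forbids the mixing. Hence the $N$-singular part of $G$ near $P^0$ is $\sum_i\sum_{l:\,A^i_l\subseteq N}2\norm{B^i_l}\,\norm{(|e|_{Y'})_{e\in A^i_l}}$ (plus a linear-in-$N$ piece from $N$-edges lying in $C^i$), and the leading pairs $(A^i_l,B^i_l)$ occurring here are precisely those governing $F(\cdot)$ on trees near $Y$ with small $N$-edges, since those pairs depend only on the $N$-edges, their growth proportions $|e|_{Y'}$, and the data trees --- identical in the two situations. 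Matching these first-order expansions, and noting that raising the $N$-edges from $Y$ with proportions $|e|_{Y'}$ is exactly the direction $Y'_{\perp\Or(Y)}$, yields $D=F'(Y,Y'_{\perp\Or(Y)})$. Adding this to the contributions above, and using that the $E_X$- and $M$-partials computed there are the components of the single gradient $\nabla F'(X,Y)$ on $\Or(Y)$, produces $\sum_{e\in E_Y}(|e|_{Y'}-|e|_Y)[\nabla F'(X,Y)]_e+F'(Y,Y'_{\perp\Or(Y)})$, the assertion.

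The hardest part is this last identification: rigorously matching the singular part of $F'(X,\cdot_\perp)$ near $P^0$ with the singular part of $F(\cdot)$ near $Y$ requires controlling the Owen--Provan support sequences of the geodesics to the data trees in a doubly-degenerate regime --- the $N$-edges shrinking to $0$ while the $M$-edges sit at $O(1)$ lengths that themselves arose from an earlier degeneration away from $X$ --- and in particular ruling out, uniformly over all small perturbations, any support pair mixing $N$-edges with $M$- or $E_X$-edges. By contrast, the first decomposition is a direct citation of Thm.~\ref{thm:DirDerDecomposition} and the handling of the $\Or(X)$-parallel term is routine.
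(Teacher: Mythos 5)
Your approach is genuinely different from the paper's, though both are sound in outline. The paper proves this theorem by a direct calculation: it substitutes the explicit formula of Thm.~\ref{thm:DirDerValue} for both $F'(X,Y^\alpha)$ and $F'(X,Y)$, justifies using a common support sequence for small $\alpha$ via Lem.~\ref{lem:DDWellDefined} and Lem.~\ref{lem:DirDerContinuous}, subtracts, splits the resulting expression into five cases (according to whether $\norm{A^i_l}_Y$ is zero or positive, and whether an edge of $C^i$ lies in $E_Y$), passes to the limit termwise, and then regroups: three of the resulting lines combine to $\sum_{e\in E_Y}(|e|_{Y'}-|e|_Y)[\nabla F'(X,Y)]_e$ and the remaining two to $F'(Y,Y'_{\perp\Or(Y)})$. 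You instead exploit a recursive structure: apply Thm.~\ref{thm:DirDerDecomposition} at each $\alpha$ to split $F'(X,Y^\alpha)$ into parallel and perpendicular pieces, differentiate the parallel piece (affine in $\alpha$) trivially, and treat the perpendicular piece with a second decomposition into a smooth $M$-part and a singular $N$-part, finally identifying the $N$-part with $F'(Y,Y'_{\perp\Or(Y)})$. This makes the self-similarity of the singular structure explicit and is essentially correct, but two places need care. First, the claim that the Decomposition Theorem ``applies verbatim'' to $G:=F'(X,\cdot_\perp)$ is a heuristic only --- that theorem is stated and proved for the sum-of-squared-distances $F$, not for a linear-plus-seminorms functional, so the analogue for $G$ must be proved rather than cited; your own sketch of the $M$/$N$ split does most of the work, so this is a matter of wording rather than substance. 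Second, the crucial identification $D=F'(Y,Y'_{\perp\Or(Y)})$ hinges on the support sequences for geodesics from $Y^\alpha$ (small $\alpha$) to the data trees coinciding with those from $Y$; the paper supplies exactly this via its two continuity lemmas, and your informal (P2)/(P3) argument for excluding mixed pairs is essentially a re-derivation of Lem.~\ref{lem:LocalSupportPairsComposition}, which you could cite instead. In short, the paper's route buys self-containedness and avoids introducing a second decomposition statement; yours buys modularity and makes the recursive nature of the optimality conditions (which ultimately drives Thm.~\ref{thm:RecursiveOptimalityConditions}) visible already at the level of this proof.
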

\noindent See Sec. \ref{sec:DiffAnalysis} for proof.

With the decomposition from Thm. \ref{thm:DirDerOfDirDer} one can express the optimality conditions for 
the optimization problem defined by (\ref{minDirDer1}-\ref{minDirDer4}) as follows.
The optimal point $Y^1$ makes the vector of partial derivatives of the directional derivative, $\nabla_Y F'(X,Y)$, point away from the origin of the orthant and have a zero projection onto the set defined by $\sum_{e\in E} p_e = 1$.
And, in any direction orthogonal to the orthant containing $Y^1$ the directional derivative from $Y^1$ is positive, that is $F'(Y^1,Y'_{\perp \Or(Y^1)})\geq 0$. 
Let $E^1 \subset E$ be the set of edges such that $|e|_{Y^1}-|e|_X>0$.
The optimality condition for a point $X^*$ which minimizes the Fr\'{e}chet function $F(X)$ in an orthant $\Or(E)$ can be expanded to 
\begin{align}
\nabla F(X)=0\label{ExpCond1L1}\\
F'(X,Y^1) \geq 0 \label{EpCond1L2}\\
\nabla_Y F'(X,Y^1) \perp \{Y|\sum_{e \in E^1} p_e =1\}\label{ExpCond1L3}\\
F'(Y^1,Y)\geq 0 \text{ for all $Y \neq Y^1$ s.t. $(Y-Y^1)\perp \Or(Y^1)$}\label{ExpCond1L4}.
\end{align}
This expanded optimality condition is obtained by reformulating a lower bound on the directional derivative
of the Fr\'{e}chet function as an optimization problem and applying 
the decomposition from Thm. \ref{thm:DirDerOfDirDer}. 
The same strategy can be applied first on the condition on line (\ref{ExpCond1L4}) and recursively on the resulting expansion.
This strategy yields the following theorem. 
\begin{thm}\label{thm:RecursiveOptimalityConditions}
Consider trees $Y^0,\ldots,Y^k$ such that (i) $\Or(Y^0)\subset \ldots \subset \Or(Y^k)=\Or(E)$, and (ii) $Y^{i+1}-Y^i \perp \Or(Y^i)$ for $i=0,\ldots,k-1$.
Let $E^i$ be the set of positive edges in $Y^i$ for $i=0,\ldots,k$. 
Define a set of edge length difference vectors $P^i$ for $i=1,...,k$ with the component for edge $e$ having value $p^i_e=|e|_{Y^i}-|e|_{Y^{i-1}}$.
Denote the unit simplex in the orthant $\Or(E^i\setminus E^{i-1})$ by $\Delta^i=\{P \in \Or(E^i\setminus E^{i-1})|\sum p^i_e=1\}$.
The minimizer of $F(X)$ in $\Or(E)$ is $Y^0$ if and only if
\begin{align}
\nabla F(Y^0)=0 &\\
F'(Y^{i-1},Y^i)\geq 0 & \textrm{
for $i=1,\dots,k$}\\
\nabla F'(Y^{i-1},Y^i) \perp \Delta^i &\textrm{
for $i=1,\dots,k$}.
\end{align}
\end{thm}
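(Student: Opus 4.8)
The plan is to prove Theorem~\ref{thm:RecursiveOptimalityConditions} by induction on $k$, peeling off one layer of the tower $\Or(Y^0)\subset\cdots\subset\Or(Y^k)=\Or(E)$ at a time, using Theorem~\ref{thm:DirDerDecomposition} and Theorem~\ref{thm:DirDerOfDirDer} as the two workhorses. The base case $k=0$ is exactly the smooth first-order condition: if $Y^0$ lies in the interior of $\Or(E)$ then $F$ is $C^1$ there and strict convexity of $F$ on $\Or(E)$ (a convex set) makes $\nabla F(Y^0)=0$ both necessary and sufficient for $Y^0$ to be the unique minimizer. So the real content is the inductive step, where I reduce the claim for a tower of height $k$ to the claim for a tower of height $k-1$.

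\medskip

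For the inductive step, I would argue as follows. Recall from Section~\ref{sec:Optimality Qualifications} that $Y^0$ minimizes $F$ over $\Or(E)$ if and only if the decomposed conditions \eqref{decompOptCond1}--\eqref{decompOptCond2} hold, i.e. $\nabla F(Y^0)=0$ on positive edges and $F'(Y^0,Y)\ge 0$ for every $Y\in\Or(E)$ with $(Y-Y^0)\perp\Or(Y^0)$. The first of these is precisely $\nabla F(Y^0)=0$. For the second, I use that $F'(Y^0,\cdot)$ is convex in $Y$ (Lemma~\ref{lem:DirDerConvex}) and positively homogeneous, so $F'(Y^0,Y)\ge 0$ for all admissible $Y$ is equivalent to $f^*_1\ge 0$ where $f^*_1$ is the minimum of $F'(Y^0,Y)$ over the unit simplex $\Delta^1$ in $\Or(E^1\setminus E^0)$ — here $Y^1$ is defined to be the (essentially unique, by strict convexity on the simplex modulo the homogeneous direction) minimizer achieving $f^*_1$. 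Now I invoke Theorem~\ref{thm:DirDerOfDirDer} to write, for the optimality of $Y^1$ in this convex program: the gradient $\nabla F'(Y^0,Y^1)$ restricted to the positive edges $E^1$ must be orthogonal to the feasible plane $\Delta^1$ (this is the KKT stationarity condition for minimizing a $C^1$ function — $F'(Y^0,\cdot)$ is $C^1$ on $\Or(Y^1)$ by Lemma~\ref{lem:DirDerC1} — over the affine-and-nonnegativity-constrained set, combined with the fact that a convex function with stationary point on the relative interior direction attains its min there), and the leftover term from the decomposition, $F'(Y^1,Y'_{\perp\Or(Y^1)})$, must be $\ge 0$ for every admissible $Y'$. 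That last condition, $F'(Y^1,Y')\ge 0$ for all $Y'\ne Y^1$ with $(Y'-Y^1)\perp\Or(Y^1)$ and $Y'\in\Or(E)$, is exactly the hypothesis of the theorem applied to the shorter tower $\Or(Y^1)\subset\cdots\subset\Or(Y^k)=\Or(E)$ with $Y^1$ playing the role of the base point — so the inductive hypothesis converts it into $\nabla F'(Y^1,\cdot)=0$-type conditions for $i=2,\ldots,k$, and assembling everything gives the stated list.

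\medskip

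The main obstacle I anticipate is making the reduction rigorous at the level of \emph{equivalence} rather than mere implication. Theorem~\ref{thm:DirDerOfDirDer} gives a decomposition of the directional derivative of the directional derivative into a "parallel" term $\sum_{e\in E_Y}(|e|_{Y'}-|e|_Y)[\nabla F'(X,Y)]_e$ and a "perpendicular" term $F'(Y,Y'_{\perp\Or(Y)})$; I need to argue that $f^*_1\ge 0$ forces \emph{both} pieces to be controlled simultaneously — that the optimal $Y^1$ kills the parallel variation (giving the orthogonality condition $\nabla F'(Y^{i-1},Y^i)\perp\Delta^i$) and leaves only a nonnegative perpendicular residual. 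This requires care about whether $Y^1$ is uniquely determined, whether the minimum over the simplex is actually attained in its relative interior or possibly on a face (which could introduce a new layer and is in fact the mechanism generating the whole tower), and whether the positive-homogeneity argument correctly identifies "$F'\ge 0$ on the simplex" with "$F'\ge 0$ on the whole perpendicular cone." I would handle the attainment issue by noting that if the simplex minimizer lies on a proper face, that face is itself a simplex in a smaller orthant and one recurses — this is precisely what produces $Y^2, Y^3,\ldots$, and the recursion terminates because each step strictly decreases the dimension of the ambient orthant, bottoming out at an orthant where the relevant restricted gradient condition is the whole story. A secondary technical point is verifying that the convexity and $C^1$ claims (Lemmas~\ref{lem:DirDerConvex}, \ref{lem:DirDerC1}) apply on the relevant multi-vistal cell so that KKT conditions are genuinely necessary and sufficient; since the theorem's hypotheses place all the $Y^i$ in a nested chain of orthants, I expect these to hold on a common multi-vistal cell, but this should be stated explicitly.
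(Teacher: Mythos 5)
Your proposal takes essentially the same route as the paper's proof: an induction on the orthant tower, using Theorem~\ref{thm:DirDerOfDirDer} to split the variation of $F'(Y^{i-1},\cdot)$ at $Y^i$ into a simplex-stationarity piece (yielding $\nabla F'(Y^{i-1},Y^i)\perp\Delta^i$) and a residual $F'(Y^i,\cdot)$ that feeds the next level of recursion. One phrasing to fix in a write-up: the reduced problem is \emph{not} literally ``the theorem applied to the shorter tower with $Y^1$ as base,'' since that would invoke the condition $\nabla F(Y^1)=0$, which is neither needed nor generally true; the correct auxiliary statement to induct on is that $F'(Y^{j-1},Y)\ge 0$ for all $Y\perp\Or(Y^{j-1})$ is equivalent to the existence of $Y^j,\ldots,Y^k$ satisfying $F'(Y^{i-1},Y^i)\ge 0$ and $\nabla F'(Y^{i-1},Y^i)\perp\Delta^i$ for $i\ge j$, with the condition $\nabla F(Y^0)=0$ conjoined once at the top. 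Your own later remark that the recursion produces ``$\nabla F'(Y^1,\cdot)$-type conditions'' shows you already have the right picture — the issue is only in how the inductive hypothesis is named.
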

\noindent See Sec. \ref{sec:DiffAnalysis} for proof.

\section{Methods for optimizing edge lengths}\label{sec:IntPointMethods}

This section contains the fundamentals for an iterative local search algorithm: initialization, an improvement method, and a method to verify optimality.

\subsection{A damped Newton's method}\label{sec:DampedNewton}
Alg. \ref{alg:interiorPoint} is designed to find approximately optimal edge lengths for a fixed tree topology.
Detailed explanations for the steps 
of this algorithm are in the following subsections.

\noindent{\bf{$\delta$-$\epsilon$ optimality conditions}}\\
Conditions for a point $X$ on the interior of an orthant to be approximately optimal are:
\begin{equation}\label{eq:ApproxOptimal}
\begin{array}{c c c c}
& & \abs{\left[\nabla F(X)\right]_e} < \delta & \textrm{ for all $e: |e|_X > \epsilon$} \\
\left[\nabla F(X)\right]_e \geq 0 & \textrm{ or } & \abs{\left[\nabla F(X)\right]_e} < \delta& \textrm{ for all $ e: |e|_X  < \epsilon$}
\end{array}
\end{equation}
If the $\delta$-$\epsilon$ optimality conditions are satisfied then $F(X^*)$ will not differ much
from the Fr\'{e}chet function value when the lengths of edges with positive derivatives
are set to 0.

\begin{algorithm}
\caption{Interior point algorithm for optimal edge lengths}
\label{alg:interiorPoint}
\begin{algorithmic}
\STATE{{\bfseries input:} $T^1,T^2,\ldots,T^n,X^0 \in \T_r$, $\epsilon>0$, $\delta>0$, $0<c<1$}
\WHILE{$\delta$-$\epsilon$ optimality conditions (\ref{eq:ApproxOptimal}) are not satisfied  }
\STATE{ compute a descent direction $P$ (Sec. \ref{sec:NewtonSteps})}
\STATE{ find a feasible step-length, $\alpha$, satisfying decrease condition (Sec. \ref{sec:StepLength}) }
\STATE{ {\bfseries let} $X^{k+1}=X^k+\alpha P$ }
\STATE { {\bfseries if} $|e| < \epsilon$ {\bfseries then} remove edge $e$ from tree $X$  }
\ENDWHILE
\end{algorithmic}
\end{algorithm}

\subsubsection{Newton steps}\label{sec:NewtonSteps}
A successful iterative algorithm will make substantial progress to an optimal point.
This can be achieved using a modified Newton's method. 
Newton's method uses a descent vector which points to the minimizer of a quadratic approximation of the objective function. The quadratic approximation in Newton's method uses the first three terms of the Taylor expansion of $F(X)$.
For the Fr\'{e}chet function the entries of the Hessian matrix of second order partial derivatives is given in Def. \ref{def:FrHess}.

The Hessian matrix is positive definite because the Fr\'{e}chet function
is strictly convex. 
Let $\left[\nabla^2 F(X)\right]_{ee'} = \frac{\partial ^2 F(X)}{\partial e \partial e'}$, which is well defined when $|e|_X>0$ and $|e'|_X>0$.
When the Hessian and gradient are well defined,
the second order Taylor approximation is 
\begin{align}
g(X; p)=& F(X)+ \sum_{e \in E}p_e[\nabla F(X)]_e
&+ \sum_{e \in E} \sum_{e' \in E} p_e p_{e'} \left[\nabla^2 F(X)\right]_{ee'}.
\end{align}
The minimizer in $p$ of $g(X;p)$ is the Newton vector $p^N = - \nabla F(X) (\nabla^2 F(X))^{-1} $. 

\subsubsection{Choosing a step length}\label{sec:StepLength}
Taking a full step along the Newton direction
minimizes the quadratic approximation of the 
Fr\'{e}chet function. However, taking a full step may result in a new point which actually has a larger
Fr\'{e}chet function value or that may
be beyond orthant boundaries.

The first precaution is to calculate the maximum step
length $\alpha_0$ such that $|e|_{k+1}=|e|_k+\alpha_0 p_e \geq 0$ for all $e$.  
If $\alpha_0 \leq 1$ then let $\alpha = \alpha_0 c_0$
where $0<c_0 <1$.

Choosing
step-length which satisfies the following \emph{sufficient decrease condition}
will ensure  a substantial decrease
in the objective function value
at every step.
Let $0<c_1<1$.
\begin{align}\label{eq:SufficientDecrease}
F(X^k+\alpha p) \leq F(X^k)+c_1 \alpha \sum_{e \in E_{X^k}} { [\nabla F(X)]_e p_e}
\end{align}
The \emph{curvature condition}, which rules out
unacceptably short steps, requires the step-length, $\alpha$, to satisfy
\begin{align}
\sum_{e \in E_{x^k}} [\nabla F(X^k+\alpha p)]_e p_e \geq c_2 \sum_{e \in E_{X^k}} { [\nabla F(X)]_e p_e}
\end{align}
for some constant $c_2$ in the interval $(c_1,1)$.

\subsubsection{Initialization}
For initializing an interior point search, any point in $\Or(E)$ would suffice, but it is preferential to start with a good guess for edge lengths. 
The global search algorithms presented in \cite{Miller2015,bacak2014computing,bacak2014convex} could provide a starting point for a local search.
One good start
strategy
can be derived by noticing that the Fr\'{e}chet function can be separated into a quadratic part and a part involving sums of norms.
\begin{align}
F(X)
&  =\sum_{i=1}^n \sum_{l=1}^{k^i} \norm{A^i_l}^2+2\norm{A^i_l}\norm{B^i_l} +\norm{B^i_l}^2 + \sum_{e \in C^i} (|e|_X^2-|e|_{T^i})^2
\end{align}
The only terms that cannot be expressed in a quadratic function of the edge lengths are collected into function
\begin{align}
S(X) 
&= \sum_{i=1}^n \sum_{l=1}^{k^i} 2 \norm{A^i_l}\norm{B^i_l}
\end{align}

Subtracting $S(X)$ from $F(X)$ yields a quadratic function,
\begin{align}
Q(X) = F(X)-S(X) & = \sum_{i=1}^n \sum_{l=1}^{k^i} \norm{A^i_l}^2+\norm{B^i_l}^2 +\sum_{e \in C^i} (|e|_X-|e|_{T^i})^2,
\end{align}

The minimizer of $Q(X)$, $X^*_Q$, can be easily found by solving $\nabla Q(X)=0$; the solution is
\begin{align}
|e|_{X^*_Q}=\frac{\sum_{i=1}^n |e|_{T^i}}{n}.
\end{align}
The optimal value $|e|_{X^*_Q}$ is non-negative, and if $e$ has a positive length in any of $T^1,\ldots,T^n$, then $|e|_{X^*_Q}$ is positive.

The gradient of $S(X)$ is non-negative at any feasible $X$.
Therefore, at any $X$ such that $\nabla Q(X) = 0$, the gradient of the Fr\'{e}chet function is greater than or equal to zero at every coordinate, $\nabla F(X)=\nabla Q(X)+\nabla S(X)>=0$.
Which implies that
the optimal edge lengths in $X^*$ must be no larger than the edge lengths in $X^*_Q$ i.e. the optimal solution is in the closed box
\begin{align}\label{eqBox}
0\leq |e|_X & \leq |e|_{X^*_Q} & & \forall e \in E.
\end{align}
Thus a reasonable starting point for search inside orthant $\Or(E)$ would be $X^*_Q$, or any point strictly inside the box defined by Eq. (\ref{eqBox}).

\subsection{Iterative algorithm for verifying optimality in a closed orthant}\label{sec:OrthantOpt}

The optimality condition in Thm. \ref{thm:RecursiveOptimalityConditions} is the logical basis for the Alg.\ref{alg:OrthantOpt}, which finds the minimizer of the Fr\'{e}chet function $F(X)$ in the closure of a fixed orthant, $\Or(E)$. 

\begin{algorithm}
	\caption{Algorithm to minimize Fr\'{e}chet sum of squares in orthant $\Or(E)$}
	\label{alg:OrthantOpt}
	\begin{algorithmic}
	
	\STATE{ {\bfseries input:} $E$; $T^1,T^2,\ldots,T^n$; $\epsilon,\delta>0$}
	\STATE{ {\bfseries initialize:} $i,k=1$; $E^0=$ optimal star tree; $E^1=E\setminus\{\textrm{leaf edges}\}$; $\epsilon^0,\epsilon^1=\epsilon$ }
	\WHILE{$i\geq 1$ }
	\STATE{ Find $Y^* \in \Or(E^i)$ which approx. minimizes $F'(Y^{i-1},Y)$, }
	\STATE{ where $Y-Y^{i-1}$ is orthogonal to $\Or(E^0\cup\ldots \cup E^{i-1})$}
	\WHILE{ approx. optimality conditions with $\epsilon^i$ and $\delta$ are not satisfied }
	\STATE{ damped Newton}
	\ENDWHILE
	\STATE{ {\bfseries if} there are some zero length edges, $S$}
	\STATE{ \hspace*{1cm}  $(E,Y,\epsilon)^{i+1}=(E,Y,\epsilon)^i,\ldots,(E,Y,\epsilon)^{k+1}=(E,Y,\epsilon)^k$; $k=k+1$ }
	\STATE{ \hspace*{1cm} $E^i=E^i\setminus S$; $|e|_{Y^i}=0$ if $e \in S$ }
	\STATE{ \hspace*{1cm} $i=i+1$}
	\STATE{ {\bfseries else}}
	\STATE{ \hspace*{1cm}  {\bfseries if} $F'(Y^{i-1},Y^*)< 0$}
	\STATE{ \hspace*{2cm} find step size $\alpha$, along $\Gamma(Y^{i-1},Y^*)$; }
	\STATE{  \hspace*{2cm}  $E^{i-1}=E^{i-1}\cup E^i$; $Y^{i-1}=Y^{i-1}+\alpha Y^*$; $\epsilon^{i-1}=\alpha c$ where $0<c<1$; }
	\STATE{ \hspace*{2cm}  $(E,Y,\epsilon)^{i}=\emptyset$; $(E,Y,\epsilon)^{i}=(E,Y,\epsilon)^{i+1},\ldots, (E,Y,\epsilon)^{k-1}=(E,Y,\epsilon)^k$}
	\STATE{  \hspace*{2cm} $i=i-1$, $k=k-1$}
	\STATE{ \hspace*{1cm}  {\bfseries else}}
	\STATE{ \hspace*{2cm} $i=i-1$}
	\STATE{ \hspace*{1cm}  {\bfseries end if}}
	\STATE{ {\bfseries end if}}
	\ENDWHILE
	\STATE{ Find the minimizer of $F(X)$ in $\Or(E^0)$.}
\end{algorithmic}
\end{algorithm}

\section{Differential analysis of the Fr\'{e}chet function in treespace}\label{sec:DiffAnalysis}

Analysis of how $F(X)$ changes with respect to $X$ provides 
useful insights for designing fast optimization algorithms.
This analysis is aimed at providing summaries for how the value of $F(X)$
changes with respect to $X$. 

%Of particular interest is the problem of checking that $F'(X,Y) \geq 0$.

The results in this section are summarized as follows:
Cor. \ref{cor:DirDerTangentSpace} gives
the value of the directional derivative when $\Or(X)=\Or(Y)$ and
Thm. \ref{thm:DirDerValue} gives the value of the directional
derivative when $\Or(X) \subseteq \Or(Y)$,
when assuming $Y$ is contained in the interior of a
multi-vistal facet.
In Lem. \ref{lem:DDWellDefined} we show that when $Y$ is on a multi-vistal face 
the value of the directional derivative 
can be expressed equivalently using any of the
representations for the geodesics from $T^1,\ldots,T^n$ to $Y$.
In Lem. \ref{lem:DirDerContinuous} and Lem. \ref{lem:DirDerConvex}
we show that the directional derivative is continuous and convex 
with respect to $Y$ when $\Or(X) \subseteq \Or(Y)$.  The proofs of Thm. \ref{thm:DirDerDecomposition}, \ref{thm:DirDerOfDirDer}, and \ref{thm:RecursiveOptimalityConditions} are presented at the end of Sec. \ref{sec:DiffAnalysis}.

When both $X$ and $Y$ are in the relative interior of the same maximal orthant of treespace,
where the gradient at $X$ is well defined in $\Or(Y)$, the directional derivative can be expressed
in terms of the gradient at $X$ inside $\Or(Y)$. However when $\Or(X) \subset \Or(Y)$, the
gradient at $X$ is not well defined in $\Or(Y)$. Analysis of the directional derivative in the
later situation, which is one of the main focuses of this section, is important because it
facilitates concise specification of optimality conditions and an efficient algorithm for verifying that a point on a lower dimensional face of an orthant $\Or$ is the minimizer of the Fr\'{e}chet function within $\Or$. 
%The rate of change of edge lengths can be decomposed into two
%components, a component which alters the lengths of edges in $X$, 
%and a component for edges in $Y$ which are not in $X$.
%The former is equivalent to
%moving along a direction from $X$ in the normal space at $X$, ${\cal N}_X$.
%The later is equivalent to moving from $X$ along a direction in the tangent space at $X$, ${\cal T}_X$. 
\begin{thm}\label{thm:Grad}
\cite[Cor. 4.1]{Miller2015} The gradient of $F$ is well defined on the interior of every maximal orthant $\Or$.
\end{thm}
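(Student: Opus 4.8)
The plan is to reduce to a local statement about each squared-distance summand and then knock out the two causes of non-differentiability flagged in Section~\ref{sec:intro}. Since $F(X)=\sum_{i=1}^{n}d(X,T^i)^2$ is a finite sum, it is enough to show that each map $X\mapsto d(X,T^i)^2$ is differentiable at every point $X$ in the interior of a maximal orthant $\Or=\Or(E)$. Fix such an $X$. Every edge in $E$ has $|e|_X>0$ and no edge outside $E$ is present, and treespace performs topological identifications only along faces where some edge of $E$ vanishes; hence a small neighborhood $N$ of $X$ inside $\Or$ is an honest open subset of the Euclidean space $\mathbb{R}^{E}$, and the stratified (``book-gluing'') source of non-smoothness is simply absent at $X$. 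The only remaining danger is that the algebraic form of $d(\cdot,T^i)^2$ changes as $Z$ crosses from one vistal cell to another inside $N$.

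To handle that, first fix one vistal cell. By \cite[Prop.~3.6]{Miller2015} the neighborhood $N$ is covered by finitely many pre-vistal facets and their faces, and on each of these the geodesic support $(\A^i,\B^i)$, the common-edge set $C^i$, and the legs are all fixed, so there $d(Z,T^i)^2=\sum_{l}\bigl(\norm{A^i_l}_Z+\norm{B^i_l}_{T^i}\bigr)^2+\sum_{e\in C^i}\bigl(|e|_Z-|e|_{T^i}\bigr)^2$. For $Z\in N$ every $|e|_Z$ with $e\in E$ is positive, and because $A^i_l\subseteq E_Z\setminus C^i=E\setminus C^i$ we get $\norm{A^i_l}_Z=\sqrt{\sum_{e\in A^i_l}|e|_Z^2}>0$ whenever $A^i_l\ne\emptyset$, while the $\norm{B^i_l}_{T^i}$ are constants. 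So this expression is $C^{\infty}$ on the relatively open intersection of $N$ with any single vistal cell, and what is left is to show the smooth pieces meet differentiably along the P2/P3-equality faces through $X$.

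That gluing is the crux, and I would argue it using the CAT(0) geometry of $\T_r$. Along such a face the competing expressions agree in value: this is just continuity of $d(\cdot,T^i)$ (Section~\ref{sec:FMproblem}), and for a P2-type face it also follows from the elementary identity $\bigl(\norm{A^i_j\cup A^i_{j+1}}+\norm{B^i_j\cup B^i_{j+1}}\bigr)^2=\bigl(\norm{A^i_j}+\norm{B^i_j}\bigr)^2+\bigl(\norm{A^i_{j+1}}+\norm{B^i_{j+1}}\bigr)^2$, valid exactly when $\norm{A^i_j}\norm{B^i_{j+1}}=\norm{A^i_{j+1}}\norm{B^i_j}$. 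To upgrade value-agreement to gradient-agreement, recall that the geodesic $\Gamma(X,T^i)$ is unique in the CAT(0) space $\T_r$, that its first leg $\Gamma_0$ lies in $\Or(X)=\Or(E)$ because $E_0=E_X$ (so its initial direction is a genuine vector of the tangent space $\mathbb{R}^{E}$ at the manifold point $X$), and that this initial direction depends continuously on $X$ by \cite{OwenProvan}. Since the space of directions at the interior point $X$ is a round Euclidean sphere, the first-variation formula for distance to a fixed point makes $Z\mapsto d(Z,T^i)$ genuinely differentiable at $X$, with gradient the outward unit tangent of $\Gamma(X,T^i)$ at $X$ --- a quantity that does not care which vistal cell we approach through. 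Hence $d(\cdot,T^i)$, and therefore $d(\cdot,T^i)^2$ (the case $X=T^i$ causing no trouble, since near $T^i$ this is the Euclidean $\norm{\cdot}^2$), is differentiable at $X$; summing over $i$ gives differentiability of $F$ at $X$, and $X$ was an arbitrary interior point.

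The main obstacle is exactly that last upgrade: turning matching one-sided directional derivatives of $d(\cdot,T^i)^2$ along a vistal-cell boundary into honest Fr\'echet differentiability. Convexity of $d(\cdot,T^i)^2$ alone does not force it; one really needs both the manifold structure of the orthant interior and the uniqueness and continuous dependence of BHV geodesics. If one wishes to avoid the first-variation formula, the alternative is a purely computational check: differentiate each of the two algebraic forms of $d(\cdot,T^i)^2$ and verify, using the defining P2- or P3-equality of the face, that their gradients have equal normal components there as well --- straightforward algebra of the same type as the value-agreement identity above, carried out over each type of face.
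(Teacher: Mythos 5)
Your argument is correct, but your main route is genuinely different from the one the paper takes. The paper's ``Idea of proof'' (fleshed out in the subsequent unnamed theorem about the restricted gradient) is a direct algebraic check: it writes down the gradient formula $\partial d^2(X,T^i)/\partial x_e = 2(1+\norm{B_l}/\norm{A_l})x_e$ for $e\in A^i_l$, observes that two valid supports for the same geodesic must have equal ratio subsequences where they differ, and concludes the formula is unchanged across multi-vistal facets meeting at $X$. That is exactly your fallback option, and the identity you quote for a P2-face --- $(\norm{A_j\cup A_{j+1}}+\norm{B_j\cup B_{j+1}})^2 = (\norm{A_j}+\norm{B_j})^2 + (\norm{A_{j+1}}+\norm{B_{j+1}})^2$ iff $\norm{A_j}\norm{B_{j+1}}=\norm{A_{j+1}}\norm{B_j}$ --- is the right computation for merging legs. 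Your primary route is instead geometric: at an interior point $X$ the space is locally flat Euclidean, the first leg $\Gamma_0$ of $\Gamma(X,T^i)$ lies in $\Or(X)$, the CAT(0) first-variation formula gives the one-sided directional derivative of $d(\cdot,T^i)$ as $-\cos$ of the Alexandrov (here Euclidean) angle with $\gamma'(0)$, hence linear in direction, and convexity of $d(\cdot,T^i)$ on the Euclidean neighborhood upgrades Gâteaux to Fréchet differentiability. This is sound and more conceptual --- it identifies the gradient with the intrinsic quantity $-\gamma'(0)$ (or $-2d(X,T^i)\gamma'(0)$ for the square) and so sidesteps any case analysis over P2- versus P3-type boundary faces. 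What the paper's algebraic approach buys in return is the explicit closed-form gradient (Def.~\ref{def:FrGrad}), which the rest of the paper needs anyway for the Newton iteration in Sec.~\ref{sec:IntPointMethods}; the geometric argument proves existence and uniqueness of $\nabla F$ but by itself does not hand you a usable formula. One small caution for your write-up: the continuity of the initial direction in $X$ that you cite from \cite{OwenProvan} is not actually needed for differentiability at a fixed $X$ --- linearity of the directional derivative in the direction variable is what does the work --- so you may drop that clause.
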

\noindent Idea of proof.
The Fr\'{e}chet function is smooth in each multi-vistal facet, and it can be shown that the gradient
function has the same value in every multi-vistal facet containing $X$ in the interior of $\Or$. Therefore the gradient is well-defined on the interior of $\Or$. 

\begin{cor}\label{cor:DirDerTangentSpace}
When $X$ and $Y$ are in the same maximal orthant the value of directional derivative from $X$ to $Y$ can be expressed
in terms of the gradient at $X$, and the differences in edge lengths $p_e =|e|_Y-|e|_X$, as
\begin{align}
F'(X,Y) = \sum_{e\in E_X} p_e \left[\nabla F(X)\right]_e
\end{align}
\end{cor}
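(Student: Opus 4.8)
The plan is to reduce the claim to the ordinary multivariable chain rule by exploiting the fact that, when $\Or(X) = \Or(Y)$, the geodesic $\Gamma(X,Y;\alpha)$ stays in the single orthant $\Or(X)$ and is just the straight-line segment $|e|_{\Gamma(X,Y;\alpha)} = |e|_X + \alpha p_e$ in the Euclidean coordinates of that orthant. First I would invoke Theorem~\ref{thm:Grad}: since $X$ lies in the interior of a maximal orthant $\Or$ and $Y \in \Or$, the gradient $\nabla F(X)$ is well defined, i.e. $F$ is differentiable at $X$ in $\Or$. Then the difference quotient in Definition~\ref{def:DirDer} is exactly
\begin{align}
\frac{F(X + \alpha P) - F(X)}{\alpha},
\end{align}
and letting $\alpha \to 0$ gives the usual Euclidean directional derivative $\sum_{e \in E_X} p_e [\nabla F(X)]_e$. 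Because $X$ is interior to $\Or$, for $\alpha$ small enough the segment from $X$ to $Y$ remains in $\Or$ (indeed one should note that if $X$ and $Y$ are both interior, the whole segment is; if only $X$ is assumed interior one restricts to a small initial subsegment, which is all the limit sees), so the difference quotient is well defined for all sufficiently small $\alpha > 0$.

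The one point needing care is that $F$ is only piecewise $C^\infty$ even inside $\Or$: its algebraic form changes across vistal-cell boundaries. So I would argue as follows. By hypothesis $X$ and $Y$ share a multi-vistal cell $V_{XY}$, and along the segment from $X$ toward $Y$ the point stays in the closure of $V_{XY}$. More precisely, since $X$ is interior to $\Or$, $X$ lies in the interior of some multi-vistal facet, and a short initial subsegment of the geodesic lies inside that same multi-vistal facet (Proposition~\ref{}-style covering argument; this is exactly the remark made at the start of Section~\ref{sec:DiffAnalysis} that $Z \to X$ eventually stays in a shared multi-vistal cell with $X$). On that facet $F$ has a single $C^\infty$ algebraic form given by \eqref{FrechetFunction} with all supports fixed, so the classical chain rule applies to $\alpha \mapsto F(X + \alpha P)$ and yields the stated formula, with the gradient being that of the fixed algebraic form — which by Theorem~\ref{thm:Grad} is the well-defined $\nabla F(X)$.

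The main obstacle, such as it is, is bookkeeping rather than mathematics: one must be sure that the single-orthant difference quotient genuinely matches Definition~\ref{def:DirDer}'s $\Gamma(X,Y;\alpha)$ — this is immediate from the Owen--Provan geodesic formula, since when $E_X = E_Y$ the sets $E_X \setminus C$ and $E_Y \setminus C$ are empty, there are no support pairs, $C = E_X$, and the edge-length formula collapses to $|e|_{\gamma(\lambda)} = (1-\lambda)|e|_X + \lambda|e|_Y$ — and that differentiability at the (possibly boundary-of-vistal-cell) point $X$ holds, which is precisely the content of Theorem~\ref{thm:Grad}. I would close by remarking that the sum ranges over $e \in E_X$ because $p_e = 0$ for every $e \notin E_X$ (as $\Or(Y) = \Or(X)$ forces $E_Y = E_X$), so extending the sum to all of $E$ changes nothing.
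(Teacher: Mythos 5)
Your proposal is correct and follows essentially the same route as the paper, which simply cites the standard calculus fact that a directional derivative of a differentiable function equals the inner product of the direction with the gradient; you supply the supporting details (the geodesic collapses to a line segment when $E_X = E_Y$, Theorem~\ref{thm:Grad} supplies differentiability at $X$, and the difference quotient only sees an initial subsegment). One small wrinkle: a point interior to $\Or$ need not lie interior to a multi-vistal facet — it may sit on a vistal-cell boundary — but as you note at the end, Theorem~\ref{thm:Grad} gives $C^1$-ness of $F$ across such boundaries on the interior of $\Or$, so the conclusion stands.
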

\begin{proof}
Expression of a directional derivative of a smooth function in terms of its gradient is a standard technique in calculus.
\end{proof}

The gradient may not be well-defined on a lower-dimensional orthant of treespace.
For a point on a lower dimensional orthant of
treespace, a well-defined analogue to the gradient is the \emph{restricted gradient}. 

\begin{defn}\label{def:FrGrad}
Let $(A^i_1,B^i_1),\ldots,(A^i_{k^i},B^i_{k^i})$ be a support
sequence for the geodesic from $X$ to $T^i$. The \emph{restricted gradient}
is the vector of partial derivatives which correspond to points $Y$ with $\Or(X) \subseteq \Or(Y)$ and $Y-X$ parallel to the axes of $\Or(X)$.
If $|e|_X >0$ then
\begin{align}
\left[\nabla F(X)\right]_e &=
\frac{\partial F(X) }{\partial e}\\
& = \lim_{\Delta e \to 0} \frac{F(X+\Delta e)-F(X)}{\Delta e}\\
&= \sum_{i=1}^n\left\{ \begin{array}{ll}
|e|_X \left(1+ \frac{||B^i_l||}{||A^i_l||}\right) & \textrm{if $e \in A_l^i$} \\
\left(|e|_X-|e|_{T^i} \right) & \textrm{if $e \in C^i$}\\
%0 & \textrm{if $|e|_X = 0$}
\end{array} \right.
\end{align}
and  if $|e|_X = 0$ then $\left[\nabla F(X)\right]_e=0$. 
\end{defn}
When $X$ is on the interior of a maximal orthant of treespace then the restricted gradient is the same as the gradient.
Note that in the case when $A^i_l=\{e\}$, $|e|_X \left(1+ \frac{||B^i_l||}{||A^i_l||}\right)=|e|_X + \norm{B^i_l}$.

Second order derivatives will be used in calculating Newton directions
in Sec. \ref{sec:IntPointMethods}.

\begin{defn}\label{def:FrHess}
Let $X$ be a point in the interior of a multi-vistal cell relative to an orthant of treespace, $\Or$. 
The restricted Hessian on $\Or$ is the matrix of second order partial derivatives
with entries having the following values:
\begin{equation}\label{FrHess}
\left[\nabla^2 F(X)\right]_{ef} = 2 \sum_{i=1}^{r} { \left\{ \begin{array}{ll}
1+\frac{\norm{B_l^i}}{\norm{A_l^i}} - \frac{\norm{B^i_l}}{\norm{A^i_l}^3}x_e^2 & \textrm{if $e=f$, $e \in A^i_l$, $|A^i_l|>1$} \\
1& \textrm{if $e=f$, $e \in A^i_l$, $A^i_l=\{e\}$} \\
1 & \textrm{if $e=f$ $e \in C^i$} \\
-\frac{\norm{B^i_l}}{\norm{A_l^i}^3}x_e x_f & \textrm{if $e \neq f$ $e,f \in A_l^i$} \\
0 & \textrm{otherwise}
\end{array} \right. }
\end{equation}
If either $|e|_X=0$ or $|f|_X=0$ then $\left[\nabla^2 F(X)\right]_{ef}=0$. 
\end{defn}

\begin{thm}
The value of the restricted gradient at a point $X$ can be expressed equivalently using the algebraic form of the Fr\'{e}chet function from any of the multi-vistal facets containing $X$.
\end{thm}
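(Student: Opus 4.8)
The plan is to show that although the Fréchet function $F$ has different algebraic forms on the several multi-vistal facets whose closures contain $X$, the restricted gradient $[\nabla F(X)]_e$ — the one-sided derivative in directions parallel to the axes of $\Or(X)$ with $\Or(X)\subseteq\Or(Y)$ — evaluates to the same vector no matter which facet's formula we differentiate. First I would reduce to the single-geodesic case: by \myeqref{FrechetFunction} the Fréchet function is a sum over $i=1,\ldots,n$ of squared geodesic distances $d(X,T^i)^2$, and the multi-vistal facets at $X$ are intersections of vistal facets $\V^2(T^i,\Or;\A^i,\B^i)$, so it suffices to prove the claim for one term $d(X,T^i)^2$ and the collection of valid support sequences for the geodesic $\Gamma_{XT^i}$. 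Fix such an $i$ and drop the superscript.

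The heart of the argument is local: when $X$ lies on a shared face of two vistal facets, the two support sequences $(\A,\B)$ and $(\A',\B')$ that are valid there differ because one of the inequalities in (P2) or (P3) holds at equality. I would split into the two cases. In the (P2) case, two consecutive pairs $(A_i,B_i),(A_{i+1},B_{i+1})$ satisfy $\norm{A_i}/\norm{B_i}=\norm{A_{i+1}}/\norm{B_{i+1}}$, and the adjacent facet's support merges them into a single pair $(A_i\cup A_{i+1},\,B_i\cup B_{i+1})$; for this to be consistent one checks $\norm{A_i\cup A_{i+1}}/\norm{B_i\cup B_{i+1}}$ equals the common ratio. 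In the (P3) case a single pair $(A_i,B_i)$ splits into $(C_1,D_1),(C_2,D_2)$ at a point where $\norm{C_1}/\norm{D_1}=\norm{C_2}/\norm{D_2}$. In either case I would compute $[\nabla F(X)]_e$ from Def. \ref{def:FrGrad} under each support and check they agree: for an edge $e\in A_i$ (unsplit case) the term is $|e|_X(1+\norm{B_i}/\norm{A_i})$, while after merging it is $|e|_X(1+\norm{B_i\cup B_{i+1}}/\norm{A_i\cup A_{i+1}})$, and equality of the two ratios (the defining condition of the shared face) makes these equal. The splitting case is symmetric. For $e\in C$ the term $(|e|_X-|e|_{T^i})$ does not depend on the support at all, and for $|e|_X=0$ the partial derivative is $0$ by definition, again support-independent.

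To go from adjacent facets to arbitrary facets containing $X$, I would invoke that the union of (closed) multi-vistal facets at $X$ is connected through codimension-one shared faces — any two facets in the fan at $X$ are joined by a path of facets meeting pairwise along such faces — so the pairwise equality just established propagates to give a single well-defined value. An alternative, cleaner route that avoids the adjacency bookkeeping is to observe that the restricted gradient is characterized intrinsically as $[\nabla F(X)]_e=\lim_{\Delta e\to 0}\frac{F(X+\Delta e)-F(X)}{\Delta e}$ (the displayed formula in Def. \ref{def:FrGrad}), a quantity defined purely in terms of the continuous function $F$ (continuity is one of the listed elementary properties) and the geometry of $\Or(X)$; since $F$ itself is single-valued, this one-sided limit, when it exists, is automatically independent of which facet's algebraic representative one uses to evaluate it. The content of the theorem is then precisely that each facet's smooth formula, differentiated, reproduces this intrinsic limit — which follows because inside the relative interior of each facet $F$ is $C^\infty$ and agrees with the given algebraic form, so its one-sided axis-derivatives at the boundary point $X$ are the limits of the interior partials, i.e. exactly the Def. \ref{def:FrGrad} expression for that facet's support.

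The main obstacle I anticipate is the case analysis in the (P2)/(P3) merging and splitting step: one must be careful that when a support pair is merged or split, the formula in Def. \ref{def:FrGrad} is applied with the right block $A^i_l$ for each edge, and that the condition cutting out the shared face is exactly the equality of Euclidean-norm ratios needed to equate the two candidate gradient values — including the degenerate subcase $A^i_l=\{e\}$ where the term collapses to $|e|_X+\norm{B^i_l}$. Handling the propagation from adjacent to all facets also requires knowing that the vistal-cell fan at $X$ is facet-connected, which should follow from the canonical description of vistal cells cited from \cite[Sec. 3.2.5]{Miller2015}; if that connectivity is not immediate I would fall back on the intrinsic-limit argument of the previous paragraph, which sidesteps it entirely.
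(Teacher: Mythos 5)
Your primary argument matches the paper's own proof: both reduce to a single squared-distance term, observe that two supports valid at $X$ differ by a (P2) or (P3) (in)equality holding at equality on the shared vistal face, and use that equality of norm ratios ($\norm{A_i}/\norm{B_i}=\norm{A_{i+1}}/\norm{B_{i+1}}$, hence $\norm{A_i\cup A_{i+1}}/\norm{B_i\cup B_{i+1}}$ equals the same ratio) to conclude the per-edge formulas $|e|_X\left(1+\norm{B_l}/\norm{A_l}\right)$ coincide. Your added remark about propagating pairwise equality through the facet-connected fan at $X$ is a reasonable tidying-up the paper leaves implicit. One caution on your proposed fallback: the ``intrinsic limit'' argument is circular in the setting that matters here, namely $X$ on a lower-dimensional orthant, since Thm.~\ref{thm:Grad} only guarantees that $\nabla F$ is well defined interior to \emph{maximal} orthants; at a lower-dimensional $X$ the existence of the two-sided limit $\lim_{\Delta e\to 0}$ is exactly what the theorem asserts, so you cannot invoke it to avoid the adjacency bookkeeping -- the explicit norm-ratio comparison is the real content, and fortunately that is the argument you lead with.
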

\begin{proof}
The restricted gradient has the same value
using any of the valid support sequences defined by vistal cells on the relative interior of $\Or$.

We now verify that at $X$ the gradient of $d^2(X,T^i)$ is the same for every valid support and signature. The gradient of $d^2(X,T^i)$ for the support $(\A,\B)$ is given as follows. Let the variable length of edge $e$ in $X$ be written as $x_e$.
\begin{equation}\label{graddistance}
\frac{\partial d^2(X,T^i)}{\partial x_e}= \left\{ \begin{array}{ll}
2 \left(1+ \frac{||B_l^i||}{||A_l^i||}\right)x_e & \textrm{if $e \in A^i_l$} \\
2 \left(x_e-|e|_T^i \right) & \textrm{if $e \in C^i$}
\end{array} \right.
\end{equation}
In this paragraph we focus on the behavior of the geodesic, $\Gamma^i$, from $X$ to tree, $T^i$.
We drop the superscript $i$ to make the notation less cumbersome when comparing two valid supports for the same geodesic. 
The geodesic $\Gamma$ has a unique support$(\A,\B)$ satisfying (P3) with strict inequalities, 
\begin{equation}\label{facetcondition}
\frac{\norm{A_1}}{\norm{B_1}} < \frac{\norm{A_2}}{\norm{B_2}} < \ldots < \frac{\norm{A_k}}{\norm{B_k}}.
\end{equation}
From \cite[Sec. 3.2.2]{Miller2015}, any other support $(\A',\B')$ for $\Gamma$ must have a signature $\mathcal{S}'$ in $(P3)$ with some equality subsequences. Suppose that $A_j'$ and $B_j'$ are in some equality subsequence satisfying $(P2)$ with $B_j'$ containing the edge $e$. Then for the support pair $A_i$ and $B_i$ such that $B_i$ contains $e$, it must hold that $\frac{\norm{A_j'}}{\norm{B_j'}} = \frac{\norm{A_i}}{\norm{B_i}}$. Now we can see that $ \left(1+ \frac{||B_j'||}{||A_j'||}\right)x_e = \left(1+ \frac{||B_i||}{||A_i||}\right)x_e$, and that the gradient of $d^2(X,T^i)$ is the same on every multi-vistal facet containing $X$ on the relative interior of $\Or$.
\end{proof}

Now we extend the results for directional derivatives to the situation when $\Or(X) \subset \Or(Y)$.
\begin{thm}\label{thm:DirDerValue}
Suppose that $Y$ lies in the interior of multi-vistal facet $V_Y$,
and $X$ is some point in $V_Y$. Let $(A^i_1,B^i_1),\ldots, (A^i_m,B^i_m)$ be the support 
pairs for the geodesic from $Y$ to $T^i$ and let $C^i$ be the set of edges
in $Y$ which are common in $T^i$. Let $E_X$ be the set of edges with positive lengths in $X$. 
Let $P$ be the vector with components $p_e = |e|_Y-|e|_X$ so that $\Gamma(X,Y;\alpha)=X+\alpha P$, and let $Z_\alpha:=\Gamma(X,Y;\alpha)$. 
Then the value of directional derivative from $X$ to $Y$ is 
\begin{displaymath}
F'(X,Y)=\sum_{e \in E_X} p_e \left[\nabla F(X)\right]_e + 2 \sum_{i=1}^n \left ( \sum_{l: \norm{A^i_l}_X = 0} \left(  \norm{A^i_l}_P\norm{B^i_l}_{T^i} \right) -\sum_{e \in C^i \setminus E_X} p_e|e|_{T^i}  \right).
\end{displaymath} 
Where $\norm{A}_X$, means use the edge length mapping from tree $X$ in evaluating the norm of the set of edges $A$, that is $\norm{A}_X = \sqrt{\sum_{e \in A} |e|_X^2}$.
\end{thm}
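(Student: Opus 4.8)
The plan is to differentiate $F$ along the segment $Z_\alpha=X+\alpha P$ one data tree at a time. Along this segment (for $\alpha>0$) every geodesic to a data tree is governed by the single support sequence of the common cell $V_Y$, so $d(Z_\alpha,T^i)^2$ is given in closed form by the Owen--Provan formula \myeqref{pathlength}; the one delicacy is the conical non-smoothness of the block norms $\norm{A^i_l}_{Z_\alpha}$ at the endpoint $X$, which dissolves once one notices that, restricted to the ray, such a norm is \emph{linear} in $\alpha$ whenever $\norm{A^i_l}_X=0$.

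First I would reduce to a single data tree. Write $g_i(\alpha):=d(Z_\alpha,T^i)^2$, so that $F(Z_\alpha)=\sum_{i=1}^n g_i(\alpha)$. I would compute each right derivative $g_i'(0^+)$ explicitly --- in particular showing it exists --- after which Definition \ref{def:DirDer} and additivity of limits give $F'(X,Y)=\sum_{i=1}^n g_i'(0^+)$. Fix $i$. Since $\Or(X)\subseteq\Or(Y)$ we have $E_X\subseteq E_Y$; for $e\in E_X$ the length $|e|_{Z_\alpha}=(1-\alpha)|e|_X+\alpha|e|_Y$ is positive on $[0,1]$, while for $e\in E_Y\setminus E_X$ it equals $\alpha|e|_Y$. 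Hence $Z_\alpha$ lies in $V_Y$ for $\alpha\in(0,1]$ with support $(A^i_1,B^i_1),\dots,(A^i_{m^i},B^i_{m^i})$ and common edge set $C^i$ for its geodesic to $T^i$, so by \myeqref{pathlength}
\[
g_i(\alpha)=\sum_{l=1}^{m^i}\bigl(\norm{A^i_l}_{Z_\alpha}+\norm{B^i_l}_{T^i}\bigr)^2+\sum_{e\in C^i}\bigl(|e|_{Z_\alpha}-|e|_{T^i}\bigr)^2 .
\]
Every summand is continuous in $\alpha$ on $[0,1]$, so letting $\alpha\to0^+$ and using continuity of $d(\cdot,T^i)$ shows this identity holds at $\alpha=0$ as well; thus $g_i'(0^+)$ is simply the right derivative at $0$ of this closed-form expression.

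Next I would differentiate it block by block, using $|e|_{Z_\alpha}=|e|_X+\alpha p_e$. If $\norm{A^i_l}_X>0$, then $\sum_{e\in A^i_l}|e|_{Z_\alpha}^2$ stays bounded away from $0$ near $\alpha=0$, so $\alpha\mapsto(\norm{A^i_l}_{Z_\alpha}+\norm{B^i_l}_{T^i})^2$ is smooth there and, by the chain rule with \myeqref{graddistance}, its derivative at $0$ is $\sum_{e\in A^i_l}p_e\,\frac{\partial d^2(X,T^i)}{\partial x_e}$. If $\norm{A^i_l}_X=0$, then $|e|_X=0$ for all $e\in A^i_l$, so $\norm{A^i_l}_{Z_\alpha}=\alpha\norm{A^i_l}_P$ is linear for $\alpha\ge0$ and $(\alpha\norm{A^i_l}_P+\norm{B^i_l}_{T^i})^2$ has derivative $2\norm{A^i_l}_P\norm{B^i_l}_{T^i}$ at $0$. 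For $e\in C^i$ the polynomial $(|e|_X+\alpha p_e-|e|_{T^i})^2$ has derivative $2(|e|_X-|e|_{T^i})p_e$ at $0$, which is $p_e\,\frac{\partial d^2(X,T^i)}{\partial x_e}$ when $e\in E_X$ and $-2p_e|e|_{T^i}$ when $e\notin E_X$ (then $|e|_X=0$). Finally I would reassemble: for each $i$, every $e\in E_X$ lies in exactly one block $A^i_l$ --- necessarily one with $\norm{A^i_l}_X>0$ --- or in $C^i$, and an edge $e\notin E_X$ that happens to lie in a smooth block contributes $0$ because $\frac{\partial d^2(X,T^i)}{\partial x_e}=0$ when $|e|_X=0$; so summing the smooth-block and $C^i\cap E_X$ contributions over $i$ yields $\sum_{e\in E_X}p_e\sum_{i}\frac{\partial d^2(X,T^i)}{\partial x_e}=\sum_{e\in E_X}p_e[\nabla F(X)]_e$ (Definition \ref{def:FrGrad}), while the $\norm{A^i_l}_X=0$ blocks and the edges of $C^i\setminus E_X$ contribute $2\sum_{i=1}^n\bigl(\sum_{l:\norm{A^i_l}_X=0}\norm{A^i_l}_P\norm{B^i_l}_{T^i}-\sum_{e\in C^i\setminus E_X}p_e|e|_{T^i}\bigr)$. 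Adding the two pieces gives the stated formula, and shows in passing that all the $g_i'(0^+)$ are finite.

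I expect the genuine obstacle to sit in the second paragraph: verifying that one support sequence $(\A^i,\B^i)$ governs the geodesic from $Z_\alpha$ to $T^i$ for all small $\alpha>0$ and that its closed form agrees with $d(Z_\alpha,T^i)^2$ up to and including $\alpha=0$ --- this is exactly where the (pre-)multi-vistal cell theory of Section \ref{sec:VistalCells} and \cite{Miller2015} is used --- and then noticing that, although $Z\mapsto\norm{A^i_l}_Z$ is conical (so not differentiable) at $Z=X$ whenever $\norm{A^i_l}_X=0$, its restriction to the ray $Z_\alpha$ is linear, which is precisely what creates the boundary term $2\norm{A^i_l}_P\norm{B^i_l}_{T^i}$ invisible to the gradient. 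The remaining manipulations are routine calculus.
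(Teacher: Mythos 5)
Your proposal is correct and follows essentially the same route as the paper's own proof. Both arguments sum over data trees, express $d^2(Z_\alpha,T^i)$ via the Owen--Provan support form with $|e|_{Z_\alpha}=|e|_X+\alpha p_e$, split the blocks into the three cases ($e\in C^i$, $\norm{A^i_l}_X>0$, $\norm{A^i_l}_X=0$), and hinge on the same key observation that $\norm{A^i_l}_{Z_\alpha}=\alpha\norm{A^i_l}_P$ is linear along the ray when $\norm{A^i_l}_X=0$, producing the boundary term $2\norm{A^i_l}_P\norm{B^i_l}_{T^i}$; the paper computes difference-quotient limits directly while you differentiate the closed form via the chain rule, but the content is the same. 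You also correctly locate the one step the paper elides (that a single support sequence from $V_Y$ governs $\Gamma(Z_\alpha,T^i)$ for all small $\alpha>0$), which the paper also takes on faith from the vistal-cell machinery of \cite{Miller2015}.
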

\begin{proof}
Let $Z_\alpha$ be a point on the geodesic segment between $X$ and $Y$. The length of edge $e$ in $Z_\alpha$ be $|e|_Z = |e|_X + \alpha p_e$.
The Fr\'{e}chet function is the sum of squared distances from a variable point to each of the data points $T^1,\ldots, T^n$, so the directional derivative of the Fr\'{e}chet function
is the sum over the indexes of the data points of the directional derivatives of
the square distances.
\begin{align}
F'(X,Y) &=
\lim_{\alpha \to 0}\frac{F(Z_\alpha)-F(X)}{\alpha}\\
& = \lim_{\alpha \to 0} \frac{\sum_{i=1}^n d^2(Z_\alpha,T^i)-\sum_{i=1}^n d^2(X,T^i)}{\alpha}\\
& = \sum_{i=1}^n \left ( \lim_{\alpha \to 0} \frac{d^2(Z_\alpha,T^i)-d^2(X,T^i)}{\alpha} \right )
%&= \sum_{i=1}^n \sum_{l=1}^{k^i} \lim_{\alpha \to 0}\frac{(\norm{A^i_l}_Z+\norm{B^i_l})^2-(\norm{A^i_l}_X-\norm{B}^i_l)^2}{\alpha}
\end{align}
For a set of edges $A$, let $\norm{A}_X = \sqrt{ \sum_{e \in A} |e|_X}$.  If an edge $e$ has zero length in a tree, $X$, or is compatible with $X$ but not present then take $|e|_X$ to be $0$.
The squared distance from $Z_\alpha$ to $T^i$ can be expressed as 
\begin{align}
d^2(Z_\alpha,T^i) & = \sum_{l=1}^{k^i} (\norm{A^i_l}_{Z_\alpha}+\norm{B^i_l})^2+\sum_{e\in C^i} (|e|_{Z_\alpha}-|e|_{T^i})^2
\end{align}
The squared distance has three types of terms: a term representing
the contribution from common edges, terms for support pairs with $\norm{A^i_l}_X>0$, and terms for support pairs with $\norm{A^i_l}_X=0$.
In the first two cases the gradient is well-defined, and taking the inner-product of the directional vector and the gradient will yield their contributions
to the directional derivative. In the third case the gradient is undefined, and its value will be obtained by analyzing the limit directly as follows.
\begin{align}
&\left ( \lim_{\alpha \to 0} \frac{\sum_{l=1}^{k^i} (\norm{A^i_l}_{Z_\alpha}+\norm{B^i_l})^2-\sum_{l=1}^{k^i} (\norm{A^i_l}_X+\norm{B^i_l})^2}{\alpha} \right )\label{eq:DirDerSqDist2}
\end{align}
Bringing out the sum and canceling in the numerators yields,
\begin{align}
\sum_{l=1}^{k^i} \lim_{\alpha \to 0} \frac{\norm{A^i_l}^2_Z-\norm{A^i_l}^2_X+2\norm{B^i_l}\left(\norm{A^i_l}_Z-\norm{A^i_l}_X\right)}{\alpha}
\end{align}
The limit of the fraction can be split into the sum of two limits, 
\begin{align}
&\lim_{\alpha \to 0} \frac{\norm{A^i_l}^2_Z-\norm{A^i_l}^2_X+2\norm{B^i_l}\left(\norm{A^i_l}_Z-\norm{A^i_l}_X\right)}{\alpha}\\
=&\lim_{\alpha \to 0} \frac{\norm{A^i_l}^2_Z-\norm{A^i_l}^2_X}{\alpha}+\lim_{\alpha \to 0}\frac{2\norm{B^i_l}\left(\norm{A^i_l}_Z-\norm{A^i_l}_X\right)}{\alpha} \label{eq:LimitFrac1}
\end{align}
If every edge in $A^i_l$ has length zero in $X$, and thus $\norm{A^i_l}_X=0$,
the limit on the left is 0 and the limit on the right simplifies to 
\begin{align}
2\norm{B^i_l} \norm{A^i_l}_P \label{eq:Incomp2}
\end{align}

The partial derivative of the squared distance from $X$ to $T^i$ with respect to the length of edge $e$, that is the component for edge $e$ in the restricted gradient vector, is
\begin{align}
[\nabla d^2(X,T^i)]_e = \left\{ \begin{array}{ll}
|e|_X \left(1+ \frac{||B^i_l||}{||A^i_l||}\right) & \textrm{if $e \in A_l^i$} \\
\left(|e|_X-|e|_{T^i} \right) & \textrm{if $e \in C^i$}
\end{array} \right.
\end{align}
The directional derivative of the squared distance simplifies to 
\begin{align}
&\lim_{\alpha \to 0} \frac{d^2(Z,T^i)-d^2(X,T^i)}{\alpha}\\
&=\sum_{e \in  E_X} p_e [\nabla d^2(X,T^i)]_e+2\sum_{l:\norm{A^i_l}_X = 0}\left( \norm{B^i_l} \sqrt{\sum_{e\in A^i_l} p_e^2} \right)-2\sum_{e \in C^i \setminus E_X} p_e |e|_{T^i}
\end{align}
Summing the directional derivatives of the squared distances over $T^1,\ldots, T^n$ yields
the expression for the value of the directional derivative in the theorem.
\end{proof}

Each tree $T^i$ induces pre-vistal facets on treespace and taken together the collection subdivides treespace into pre-multi-vistal facets where the Fr\'{e}chet function can be represented in a fixed form. On the shared faces of pre-vistal facets the Fr\'{e}chet function can be represented in multiple valid forms. 
At such points the value of the Fr\'{e}chet function and gradient are the same, but higher order derivatives can differ depending on which representation of the Fr\'{e}chet function is used. 
In the following lemma the directional derivative is determined to be well-defined at such points.
We now extend the results to the situation where $Y$ is allowed to be
on a vistal face.
In this situation there can be multiple
valid support sequences for the geodesics from $Y$ to $T^1, \ldots, T^n$.
\begin{lem}\label{lem:DDWellDefined}
Suppose that $X$ and $Y$ are in the same multi-vistal facet, $\V$, and
that $Y$ is on a face of $\V$ on the interior of an orthant.  The value of the directional derivative can be expressed equivalently using any valid support sequences for the geodesics from $Y$ to $T^1,\ldots,T^n$.
\end{lem}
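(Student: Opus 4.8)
The plan is to reduce to a single source tree, to isolate the one support-dependent piece of the directional-derivative formula of Theorem~\ref{thm:DirDerValue}, and then to show that that piece is invariant under the elementary moves connecting any two valid support sequences at $Y$.

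Since $F=\sum_{i=1}^n d^2(\cdot,T^i)$ and the $n$ geodesics are supported independently, it suffices to fix $i$, write $T=T^i$, $C=C^i$, and prove that the directional derivative of $d^2(\cdot,T)$ along the segment from $X$ to $Y$ does not depend on the valid support $(\A,\B)$ for the geodesic $Y\to T$ used to evaluate it. (That segment is the straight segment $X+\alpha P$ because $\Or(X)\subseteq\Or(Y)$, so by Sturm's theorem $\alpha\mapsto F(X+\alpha P)$ is convex and its one-sided derivative at $0$ exists intrinsically; what must be checked is that each representation computes this common value.) Following the computation in the proof of Theorem~\ref{thm:DirDerValue}, this directional derivative equals
\[
\sum_{e\in E_X} p_e\,[\nabla d^2(X,T)]_e \;+\; 2\sum_{l:\ \norm{A_l}_X=0}\norm{A_l}_P\norm{B_l}_T \;-\; 2\sum_{e\in C\setminus E_X} p_e\,|e|_T .
\]
The first sum is support-independent because the restricted gradient at $X$ is (proved earlier in this section), and the last sum depends only on $C$, $E_X$, $P$, $T$, hence only on $X,Y,T$. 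So the whole question reduces to the middle sum; and since every edge in a block with $\norm{A_l}_X=0$ has $|e|_X=0$, so that $p_e=|e|_Y$ there, the middle sum equals $2\sum_{l:\ A_l\cap E_X=\emptyset}\norm{A_l}_Y\norm{B_l}_T$, taken over the \emph{pure} support pairs.

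Next I would invoke the structure of the valid supports at $Y$: by the equality-subsequence analysis of Owen--Provan (the device already used in this section to show the restricted gradient is well defined), any two valid supports for $Y\to T$ differ by a sequence of elementary moves, each merging or splitting two consecutive pairs $(A_i,B_i),(A_{i+1},B_{i+1})$ with equal ratio at $Y$, namely $\norm{A_i}_Y/\norm{B_i}_T=\norm{A_{i+1}}_Y/\norm{B_{i+1}}_T=:\rho$ and $A_{i+1}$ compatible with $B_i$ by (P1). It therefore suffices to check that $\sum_{\mathrm{pure}}\norm{A_l}_Y\norm{B_l}_T$ is unchanged by one such move. The merged pair again has ratio $\rho$; if both pairs are pure, the invariance is exactly the equality case of Cauchy--Schwarz, $\norm{A_i}_Y\norm{B_i}_T+\norm{A_{i+1}}_Y\norm{B_{i+1}}_T=\norm{A_i\cup A_{i+1}}_Y\,\norm{B_i\cup B_{i+1}}_T$, the equal-ratio hypothesis being precisely the parallelism condition; if neither pair is pure, the middle sum is untouched.

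The main obstacle is the remaining case, a move merging a pure pair into a non-pure one: naively this would move edges of $E_Y\setminus E_X$ out of a pure block, where they contribute to the middle sum, into a mixed block, where they contribute nothing, and so would change the value. The hard part will be ruling this out, and I expect to do so by exploiting that $X$ and $Y$ lie in a \emph{common} multi-vistal facet $\V$: the edges contracted in passing from $Y$ to $X$ must be ``separable'' inside $\V$, which through the (P3) facet inequalities — together with the observation that along $X+\alpha P$ the pure blocks are exactly the blocks whose ratio tends to $0$ as $\alpha\to0$ — forces the contracted edges to form whole support pairs of the facet support that are never equal-ratio-adjacent at $Y$ to a pair meeting $E_X$. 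Equivalently I would prove the sub-lemma that ``belongs to a pure support pair'' is, for each edge of $E_Y\setminus E_X$, an invariant of all valid supports at $Y$, using (P1)-compatibility to preclude entanglement of the contracted edges with edges of $X$. Granting this, every elementary move is pure--pure or non-pure--non-pure, the middle sum is invariant, and summing over $i$ finishes the proof.
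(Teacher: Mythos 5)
Your proposal follows essentially the same path as the paper's proof: reduce to elementary equal-ratio merge/split moves on adjacent support pairs and verify the equal-ratio Cauchy--Schwarz identity, which is exactly the computation the paper carries out. Where you are more careful is in noticing that this step --- ``parameterize edge lengths in $\alpha$ and cancel'' --- tacitly requires the split/merged block $(A_l,B_l)$ to be \emph{pure} (every $e\in A_l$ has $|e|_X=0$, so $|e|_{Z_\alpha}=\alpha p_e$), and hence that a merge between a pure pair and a non-pure pair must be excluded; the paper's proof does not address that case explicitly. The sub-lemma you propose --- that membership in a pure (local) support pair is an invariant of all valid supports at $Y$ --- is precisely the content of Lemma~\ref{lem:LocalSupportPairsComposition}, which the paper proves later by essentially the (P1)-compatibility / no-entanglement argument you sketch: the local pairs partition two edge sets determined only by $X$, $Y$, and $T$, independent of the chosen support. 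To close your proof, either prove your sub-lemma along those lines or cite Lemma~\ref{lem:LocalSupportPairsComposition} (whose proof does not depend on the present lemma). Your further observation that along $X+\alpha P$ the pure blocks are exactly those whose (P2) ratio tends to $0$ is correct and gives an alternative, self-contained reason that a pure--non-pure merge, even if valid at $Y$, never occurs along the segment on which $F'(X,Y)$ is actually evaluated.
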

\begin{proof}
The form of $F'(X,Y)$ is constant within an open multi-vistal facet, and changes
at boundaries of vistal facets.
When $Y$ reaches the boundary of a vistal facet, that is either at least one of the (P2)
constrains reaches equality, at least one of the (P3) constraints reaches equality, 
or when the length of an edge reaches zero or increases from zero, this is called
the collision of $Y$ with the boundary of that vistal facet. 
A point $T^i$,  and associated geodesic $\Gamma(T^i,Y)$ are said to generate the vistal facet collision.
When $Y$ collides with a (P2) boundary of a vistal facet at least two support pairs for the geodesic merge; and when $Y$ collides with a (P3) boundary 
at least two support pairs for the geodesic
could be split in such a way that the resulting support is valid. 
In either case there are at least two valid forms for the geodesic.
Let $(C_1,D_1),(C_2,D_2)$ be support pairs which are formed from a partition of the support pair $(A^i_l,B^i_l)$, such that either of the following support sequences for the geodesic from $Y$ to $T^i$ is valid: $(A^i_1,B^i_1),\ldots,(A^i_l,B^i_l),\ldots,(A^i_m,B^i_m)$ or $(A^i_1,B^i_1),\ldots,(C_1,D_1),(C_2,D_2),\ldots,(A^i_m,B^i_m)$; and $ \frac{\norm{C_1}}{\norm{D_1}}=\frac{\norm{C_2}}{\norm{D_2}}=\frac{\norm{A^i_l}}{\norm{B^i_l}} $. 
Rescaling the lengths of edges in $A^i_l$ does not change the form of the geodesic for small $\alpha$ and $l \leq k$. Parameterizing the lengths of edges in terms of $\alpha$ and canceling $\alpha$ yields  $ \frac{\sqrt{\sum_{e\in C_1} p_e^2}}{\norm{D_1}}=\frac{\sqrt{\sum_{e\in C_2} p_e^2}}{\norm{D_2}}=\frac{\sqrt{\sum_{e\in A^i_l} p_e^2}}{\norm{B^i_l}} $. That fact, and the fact that $C_1 \cup C_2$ partition $A^i_l$, and $D_1 \cup D_2$ partition $B^i_l$ implies that 
$\norm{D_1}\sqrt{\sum_{e\in C_1} p_2^2} + \norm{D_2} \sqrt{\sum_{e\in C_2} p_e^2}=\norm{B^i_l}\sqrt{\sum_{e\in A^i_l} p_e^2}$. Thus the directional derivative is continuous across vistal facet boundaries from (P2) and (P3) constraints. 
\end{proof}

Now we extend the results for directional derivatives to directions issuing from $X$ to points $Y$ in a small enough radius such that $\Or(X) \subseteq \Or(Y)$ 
and $X$ and $Y$ share a multi-vistal facet.

\begin{lem}\label{lem:DirDerContinuous}
The directional derivative, $F'(X,Y)$, is a continuous function of $Y$ over the set of $Y$ such that $\Or(X) \subseteq \Or(Y)$ and $X$ and $Y$ share a vistal facet.
\end{lem}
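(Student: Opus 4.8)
The plan is to show continuity of $F'(X,Y)$ in $Y$ by decomposing the claim across the (finitely many) multi-vistal facets that contain $X$, and then gluing using the value formula of Thm.~\ref{thm:DirDerValue} together with the consistency result of Lem.~\ref{lem:DDWellDefined}. First I would fix $X$ and restrict attention to the set $\mathcal{N}=\{Y:\Or(X)\subseteq\Or(Y),\ X\text{ and }Y\text{ share a vistal facet}\}$. Inside the interior of any single multi-vistal facet $V$, the supports $(\A^i,\B^i)$ for the geodesics from $Y$ to $T^1,\dots,T^n$ are fixed, so by Thm.~\ref{thm:DirDerValue} the directional derivative has the explicit algebraic form
\begin{displaymath}
F'(X,Y)=\sum_{e\in E_X} p_e\,[\nabla F(X)]_e + 2\sum_{i=1}^n\Big(\sum_{l:\norm{A^i_l}_X=0}\norm{A^i_l}_P\,\norm{B^i_l}_{T^i} - \sum_{e\in C^i\setminus E_X} p_e|e|_{T^i}\Big),
\end{displaymath}
where $p_e=|e|_Y-|e|_X$. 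Here the only $Y$-dependence enters through $P$ (a linear function of $Y$), through the Euclidean norms $\norm{A^i_l}_P=\sqrt{\sum_{e\in A^i_l}p_e^2}$ (continuous in $P$, hence in $Y$), and through $\norm{B^i_l}_{T^i}$, which is a constant determined by the fixed support. Thus $F'(X,\cdot)$ is a sum of linear and Euclidean-norm terms, hence continuous on the interior of $V$, and in fact extends continuously to the closure $\bar V\cap\mathcal{N}$ since each of these building blocks is globally continuous in $Y$.

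The substantive step is matching these facet-wise formulas along shared faces. I would take a point $Y_0$ on a common face of two multi-vistal facets $V$ and $V'$ in $\mathcal{N}$ and a sequence $Y_n\to Y_0$; after passing to a subsequence I may assume all $Y_n$ lie in a single facet $\bar V$. By the continuity just established, $F'(X,Y_n)$ converges to the value obtained by plugging $Y_0$ into the $V$-formula. It remains to check that this limiting value equals $F'(X,Y_0)$ computed from whichever support is valid at $Y_0$. This is exactly what Lem.~\ref{lem:DDWellDefined} delivers: at $Y_0$, which lies on a vistal-facet boundary, the several valid support sequences for each geodesic $\Gamma(T^i,Y_0)$ all yield the same value of $F'(X,Y_0)$, because when two support pairs merge (a (P2) collision) or split (a (P3) collision) the equality of ratios forces $\norm{D_1}\norm{A^i_l}_{P}$-type identities such as $\norm{D_1}\sqrt{\sum_{e\in C_1}p_e^2}+\norm{D_2}\sqrt{\sum_{e\in C_2}p_e^2}=\norm{B^i_l}\sqrt{\sum_{e\in A^i_l}p_e^2}$. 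Hence the $V$-formula and the $V'$-formula agree at $Y_0$, and the limit along $Y_n$ reproduces $F'(X,Y_0)$ regardless of which facet the approach is through.

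To finish, I would note that $\mathcal{N}$ meets only finitely many multi-vistal facets (the vistal facets cover $\T_r^2$ and are locally finite), so for an arbitrary sequence $Y_n\to Y_0$ in $\mathcal{N}$ I can split it into finitely many subsequences each lying in one facet's closure, apply the above to each, and conclude $F'(X,Y_n)\to F'(X,Y_0)$. One should also handle the degenerate subcase $\Or(X)=\Or(Y_0)$, where $F'(X,\cdot)$ is the smooth gradient pairing of Cor.~\ref{cor:DirDerTangentSpace}; this is subsumed by Thm.~\ref{thm:DirDerValue} with all $\norm{A^i_l}_X>0$, so the same argument applies without change. The main obstacle is the gluing step: making sure that the piecewise algebraic formula from Thm.~\ref{thm:DirDerValue} actually agrees across every type of vistal-facet boundary (those from edge lengths hitting zero, from (P2) equalities, and from (P3) equalities), and this is precisely where Lem.~\ref{lem:DDWellDefined} is indispensable; everything else is routine continuity of linear and square-root functions.
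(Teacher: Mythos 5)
Your architecture is sound and actually more explicit than the paper's terse proof: restrict to a single multi-vistal facet, read the value off Thm.~\ref{thm:DirDerValue} as a sum of linear and Euclidean-norm terms (hence continuous in $Y$), then glue along shared faces, using a finiteness argument for arbitrary sequences. The paper's proof is shorter because it addresses only the orthant-boundary case directly, leaving (P2)/(P3) consistency to Lem.~\ref{lem:DDWellDefined}, which precedes it.

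However, there is a concrete gap in your gluing step. You assert that Lem.~\ref{lem:DDWellDefined} covers ``every type of vistal-facet boundary (those from edge lengths hitting zero, from (P2) equalities, and from (P3) equalities).'' It does not. The hypothesis of Lem.~\ref{lem:DDWellDefined} explicitly requires that $Y$ be on a face of $\V$ \emph{on the interior of an orthant}, which rules out exactly the first type of boundary --- orthant faces, where some $|e|_Y$ hits zero. When a facet boundary is crossed because an edge $e$ appears or disappears from $Y$, the geodesic supports change by inserting or deleting $e$ from some $A^i_l$ or from $C^i$, and Lem.~\ref{lem:DDWellDefined} gives you nothing here. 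The missing observation --- precisely the content of the paper's own proof of this lemma --- is elementary: the contribution of such an edge to the formula of Thm.~\ref{thm:DirDerValue} enters only through $p_e=|e|_Y$ inside $\norm{A^i_l}_P$ or inside $p_e|e|_{T^i}$, both of which vanish continuously as $|e|_Y\to 0$; so the facet-wise formulas on the two sides of an orthant face agree on that face with the discrepant terms identically zero there. With this addition your proof is complete, but without it the orthant-boundary case is unsupported: you have listed it and credited the wrong lemma rather than arguing it.
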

\begin{proof}
The directional derivative is a continuous function at the faces of orthants because when an edge length $|e|_Y$ increases from zero its contribution to $F'(X,Y)$ is a continuous function which starts at the value zero. Thus, when the topology of $Y$ changes $F'(X,Y)$ changes continuously as a function of the edge lengths. 
\end{proof}

The following lemma is used in the proof of Lem. \ref{lem:DirDerConvex}, and was also discovered independently by Megan Owen \cite{Owen2014private}.

\begin{lem}\label{lem:LocalGeoTriangle}
Let $Y^0$ and $Y^1$ be points in $\T_r$ such that $\Or(X) \subseteq \Or(Y^0)$
and $\Or(X) \subseteq \Or(Y^1)$.
Let $Y^t=\Gamma(Y^0,Y^1;t)$ be the point which is proportion $t$ along the geodesic from $Y^0$ to $Y^1$.
The point which is $\alpha$ proportion along the geodesic from $X$ to $Y^t$ is $t$ proportion along the geodesic between the point $\Gamma_{X,Y^0}(\alpha)$ and the point $\Gamma_{X,Y^1}(\alpha)$, that is $\Gamma(X,Y^t;\alpha)=\Gamma(\Gamma(X,Y^0;\alpha),\Gamma(X,Y^1;\alpha);t)$.
\end{lem}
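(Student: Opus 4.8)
The plan is to produce one explicit tree $W$ lying in $\Or(Y^t)$ and to check that each side of the claimed identity equals $W$.

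First I would record the elementary fact that whenever $\Or(X)\subseteq\Or(Z)$, the geodesic $\Gamma(X,Z;\cdot)$ is simply the affine segment inside $\Or(Z)$: since every split of $X$ is a split of $Z$ and every split of $Z$ is compatible with $X$, there are no nontrivial support pairs, every coordinate falls into the common case of the Owen--Provan description, and $|e|_{\Gamma(X,Z;\lambda)}=(1-\lambda)|e|_X+\lambda|e|_Z$, with the convention $|e|_X=0$ for splits of $Z$ absent from $X$. I would then verify $\Or(X)\subseteq\Or(Y^t)$ for every $t\in[0,1]$: the splits of $X$ lie in both $Y^0$ and $Y^1$, hence in the common-edge set of the $Y^0$--$Y^1$ geodesic, hence have length $(1-t)|e|_{Y^0}+t|e|_{Y^1}>0$ in $Y^t$. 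Taking $Z=Y^t$, the left side $\Gamma(X,Y^t;\alpha)$ is therefore the tree $W$ with $|e|_W=(1-\alpha)|e|_X+\alpha|e|_{Y^t}$ on the splits of $X$ and $|e|_W=\alpha|e|_{Y^t}$ elsewhere.

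For the right side I would set $Z^0_\alpha:=\Gamma(X,Y^0;\alpha)$ and $Z^1_\alpha:=\Gamma(X,Y^1;\alpha)$, dispose of the trivial cases $\alpha=0$ and $t\in\{0,1\}$, and then assume $0<\alpha<1$; by the previous paragraph $\Or(Z^0_\alpha)=\Or(Y^0)$ and $\Or(Z^1_\alpha)=\Or(Y^1)$, with $|e|_{Z^0_\alpha}=\alpha|e|_{Y^0}$ on $E_{Y^0}\setminus E_X$ and $|e|_{Z^0_\alpha}=(1-\alpha)|e|_X+\alpha|e|_{Y^0}$ on $E_X$ (similarly for $Z^1_\alpha$). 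The heart of the argument is to show that the support sequence $(\A,\B)=(A_1,B_1),\dots,(A_k,B_k)$ of the $Y^0$--$Y^1$ geodesic is also a valid support sequence for the $Z^0_\alpha$--$Z^1_\alpha$ geodesic. The common set $C$ is determined by the two topologies alone, hence unchanged; since the splits of $X$ lie in $C$, every block $A_j$ and $B_j$ is disjoint from $E_X$, so along $Z^0_\alpha,Z^1_\alpha$ the coordinates in these blocks are exactly $\alpha$ times those in $Y^0,Y^1$. Consequently every ratio appearing in (P2) and in (P3) is invariant under this common rescaling and so remains valid, while (P1) is purely combinatorial; by the Owen--Provan theorem and uniqueness of geodesics, $(\A,\B)$ then describes $\Gamma(Z^0_\alpha,Z^1_\alpha;\cdot)$. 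Substituting the scaled lengths into the Owen--Provan length formula gives $|e|_{\Gamma(Z^0_\alpha,Z^1_\alpha;t)}=\alpha|e|_{\Gamma(Y^0,Y^1;t)}=\alpha|e|_{Y^t}$ for $e$ in any $A_j$ or $B_j$, gives $(1-t)|e|_{Z^0_\alpha}+t|e|_{Z^1_\alpha}=\alpha|e|_{Y^t}$ for $e\in C\setminus E_X$, and gives $(1-t)|e|_{Z^0_\alpha}+t|e|_{Z^1_\alpha}=(1-\alpha)|e|_X+\alpha|e|_{Y^t}$ for $e\in E_X$; comparing with the description of $W$, the right side equals $W$ as well, which closes the argument.

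The step I expect to be the main obstacle is the support-sequence transfer: confirming that inflating $X$ into $Z^0_\alpha$ and $Z^1_\alpha$ does not change $C$, and that condition (P3), which quantifies over all compatible refinements of a block, survives verbatim precisely because each block $A_j,B_j$ avoids $E_X$ and therefore scales homogeneously by $\alpha$. A secondary nuisance is that the common edges of the $Y^0$--$Y^1$ geodesic split into $E_X$, whose $Z^i_\alpha$-lengths are affine but not homogeneous in $\alpha$, and $C\setminus E_X$, whose lengths scale by $\alpha$; these two families must be tracked separately in the final comparison with $W$.
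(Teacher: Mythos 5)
Your argument follows essentially the same route as the paper's: set $Z^0_\alpha=\Gamma(X,Y^0;\alpha)$ and $Z^1_\alpha=\Gamma(X,Y^1;\alpha)$, show that the support sequence of the $Y^0$--$Y^1$ geodesic transfers to the $Z^0_\alpha$--$Z^1_\alpha$ geodesic because the $A_j,B_j$ blocks avoid $E_X$ and therefore scale homogeneously by $\alpha$ (so (P2), (P3) survive), and then substitute into the Owen--Provan edge-length formulas and compare with the affine-segment description of $\Gamma(X,Y^t;\alpha)$. Your write-up is a bit more careful than the paper's on a few points the paper treats implicitly --- you explicitly observe that the blocks are disjoint from $E_X$, explicitly verify $\Or(X)\subseteq\Or(Y^t)$, and explicitly split the common edges into $E_X$ versus $C\setminus E_X$ --- but these are refinements of, not departures from, the paper's argument; one small slip is that your case analysis should also dispose of $\alpha=1$, which your stated assumption $0<\alpha<1$ omits.
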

\begin{proof}
Let $Y^0(\alpha) = \Gamma_{X Y^0} (\alpha)$ and let $Y^1(\alpha) = \Gamma_{X Y^1} (\alpha)$.
Let $C = E_{Y^0(\alpha)} \cap E_{Y^1(\alpha)}$. By definition $E_X \subseteq C$.
The length of $e$ in $Y^0(\alpha)$ is
\begin{align}\label{eq:LengthY0}
|e|_{Y^0(\alpha)}=
\left\{ \begin{array}{ll}
|e|_X + \alpha |e|_{Y^0} & \textrm{if $e \in C$} \\
\alpha |e|_{Y^0} & \textrm{if $e \in E_{Y^0} \setminus C$}\\
%0 & \textrm{if $|e|_X = 0$}
\end{array} \right.
\end{align}
and the length of $e$ in $Y^1(\alpha)$ is
\begin{align}\label{eq:LengthY1}
|e|_{Y^1(\alpha)}=
\left\{ \begin{array}{ll}
|e|_X + \alpha |e|_{Y^1} & \textrm{if $e \in C$} \\
\alpha |e|_{Y^1} & \textrm{if $e \in E_{Y^1} \setminus C$}\\
%0 & \textrm{if $|e|_X = 0$}
\end{array} \right.
\end{align}
A geodesic support sequence which is valid for the geodesic between $Y^0$ and $Y^1$ is valid for the geodesic between $Y^0(\alpha)$ and $Y^1(\alpha)$.
The incompatibilities of edges in $A$ and $B$ are the same for any $\alpha$.
Suppose that a support sequence satisfies (P2) and (P3) for some $\alpha$. Factoring out $\alpha$ from the numerators and denominators of the $(P2)$ and $(P3)$ ratios reveals that the combinatorics of the geodesic between
$Y^0(\alpha)$ and $Y^1(\alpha)$ depends on the relative proportions
of lengths of edges in $Y^0$ and $Y^1$, and not on the value of $\alpha$.
That is,
\begin{align}
\frac{ \norm{A_l}_{Y^0(\alpha)}}{\norm{B_l}_{Y^1(\alpha)}}=\frac{\sqrt{\sum_{e \in A_l} \alpha |e|_{Y^0}}}{\sqrt{\sum_{e \in B_l} \alpha |e|_{Y^1}}} = \frac{ \norm{A_l}_{Y^0}}{\norm{B_l}_{Y^1}}
\end{align}
Now we show that $|e|_{\Gamma_{Y^0(\alpha) Y^1(\alpha)}(t)} = |e|_{\Gamma_{X Y^t}(\alpha)}$.
The combinatorics of the geodesic between $Y^0(\alpha)$ and $Y^1(\alpha)$
do not depend on $\alpha$. Therefore, which edges have positive lengths in the $l^{th}$ leg of $\Gamma_{Y^0(\alpha)Y^1(\alpha)}$ does not depend on $\alpha$.
The length of edge $e$ at $\Gamma_{Y^0(\alpha) Y^1(\alpha)}(t)$ is
\begin{align}
|e|_{\Gamma_{Y^0(\alpha) Y^1(\alpha)}(t)}&=\displaystyle\left\{\begin{array}{ll}
\frac{(1-t)\norm{A_j}_\alpha-t \norm{B_j}_\alpha}{\norm{A_j}_\alpha}|e|_{Y^0(\alpha)}&e\in A_j\\[1em]
\frac{t \norm{B_j}_\alpha-(1-t)\norm{A_j}_\alpha}{\norm{B_j}_\alpha}|e|_{Y^1(\alpha)}&e\in B_j\\[1.5em]
(1-t)|e|_{Y^0(\alpha)}+t |e|_{Y^1(\alpha)}&e\in C
\end{array}.\right.\\
\end{align}
Substituting $\norm{A_j}_\alpha=\alpha \norm{A_j}$, $\norm{B_j}_\alpha=\alpha \norm{B_j}$, \ref{eq:LengthY0}, and \ref{eq:LengthY1} yields
\begin{align}
|e|_{\Gamma_{Y^0(\alpha) Y^1(\alpha)}(t)}&=\displaystyle\left\{\begin{array}{ll}
\alpha \frac{(1-t)\norm{A_j}-t \norm{B_j}}{\norm{A_j}}|e|_{Y^0}&e\in A_j\\[1em]
\alpha \frac{t \norm{B_j}-(1-t)\norm{A_j}}{\norm{B_j}}|e|_{Y^1}&e\in B_j\\[1.5em]
|e|_X + \alpha((1-t)|e|_{Y^0}+t |e|_{Y^1})&e\in C\\
\end{array}.\right.
\end{align}
Now the length of $e$ in $\Gamma_{XY^t}(\alpha)$ is 
\begin{align}\label{eq:LengthYt}
|e|_{\Gamma_{XY^t}(\alpha)}=
\left\{ \begin{array}{ll}
|e|_X + \alpha |e|_{Y^t} & \textrm{if $e \in C$} \\
\alpha |e|_{Y^t} & \textrm{if $e \in E_{Y^t} \setminus C$}\\
%0 & \textrm{if $|e|_X = 0$}
\end{array} \right.
\end{align}
The length of $e$ in $Y^t$ is given by 
\begin{align}
|e|_{Y^t}&=\displaystyle\left\{\begin{array}{ll}
\frac{(1-t)\norm{A_j}-t \norm{B_j}}{\norm{A_j}}|e|_{Y^0}&e\in A_j\\[1em]
\frac{t \norm{B_j}-(1-t)\norm{A_j}}{\norm{B_j}}|e|_{Y^1}&e\in B_j\\[1.5em]
((1-t)|e|_{Y^0}+t |e|_{Y^1})&e\in C\\
\end{array}.\right.
\end{align}
Therefore  $|e|_{\Gamma_{Y^0(\alpha) Y^1(\alpha)}(t)} = |e|_{\Gamma_{X Y^t}(\alpha)}$ holds.
\end{proof}

\begin{lem}\label{lem:DirDerConvex}
The directional derivative $F'(X,Y)$ is a convex function of $Y$ over the set of $Y$ such that $\Or(X) \subseteq \Or(Y)$ and $X$ and $Y$ share a vistal facet.
\end{lem}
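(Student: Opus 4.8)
The plan is to derive the convexity of $F'(X,\cdot)$ from two ingredients already in hand: geodesic convexity of the Fr\'echet function $F$ (Sturm's theorem, cited in Section \ref{sec:FMproblem}) and the local triangle identity $\Gamma(X,Y^t;\alpha)=\Gamma\bigl(\Gamma(X,Y^0;\alpha),\Gamma(X,Y^1;\alpha);t\bigr)$ of Lemma \ref{lem:LocalGeoTriangle}. I would fix $Y^0$ and $Y^1$ satisfying $\Or(X)\subseteq\Or(Y^0)$, $\Or(X)\subseteq\Or(Y^1)$ and sharing a vistal facet with $X$, set $Y^t=\Gamma(Y^0,Y^1;t)$, and first check that $Y^t$ again lies in the admissible set: from the edge-length formulas in the proof of Lemma \ref{lem:LocalGeoTriangle}, every edge of $X$ retains a positive length in $Y^t$ (namely $(1-t)|e|_{Y^0}+t|e|_{Y^1}$), so $\Or(X)\subseteq\Or(Y^t)$, and for $Y^0,Y^1$ near $X$ the shared-vistal-facet condition is preserved, so the assertion of convexity is meaningful.

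The core step is as follows. Writing $Y^j(\alpha)=\Gamma(X,Y^j;\alpha)$, Lemma \ref{lem:LocalGeoTriangle} identifies $\Gamma(X,Y^t;\alpha)$ with the point a proportion $t$ of the way along the geodesic from $Y^0(\alpha)$ to $Y^1(\alpha)$. Geodesic convexity of $F$ then gives
\[
F\bigl(\Gamma(X,Y^t;\alpha)\bigr)\le (1-t)\,F\bigl(Y^0(\alpha)\bigr)+t\,F\bigl(Y^1(\alpha)\bigr).
\]
Subtracting $F(X)=(1-t)F(X)+tF(X)$ and dividing by $\alpha>0$ turns this into an inequality among the three difference quotients whose $\alpha\to 0$ limits are, by Definition \ref{def:DirDer}, exactly $F'(X,Y^t)$, $F'(X,Y^0)$ and $F'(X,Y^1)$; these limits exist and are finite by Theorem \ref{thm:DirDerValue} (and Lemma \ref{lem:DDWellDefined} in case $Y^t$ sits on a vistal face). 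Passing to the limit preserves the inequality, so $F'(X,Y^t)\le (1-t)F'(X,Y^0)+t\,F'(X,Y^1)$. When $Y$ is confined to a single orthant the geodesic $Y^t$ is an ordinary line segment, so this is convexity in the usual sense on polyhedral subsets, as needed later.

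I do not anticipate a serious obstacle; the two points requiring a little care are (i) making sure the three difference quotients genuinely converge so that taking the limit of the inequality is legitimate — this is precisely what the directional-derivative evaluation results proved above supply — and (ii) verifying that the hypotheses of Lemma \ref{lem:LocalGeoTriangle} are met, but they coincide with the hypotheses of the present lemma. Everything else is the routine fact that a pointwise limit of convex-combination bounds inherited from a geodesically convex function is again convex.
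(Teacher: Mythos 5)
Your proof is correct and follows essentially the same route as the paper's: combine Lemma \ref{lem:LocalGeoTriangle} with geodesic convexity of $F$ (Sturm) to bound $F(\Gamma_{XY^t}(\alpha))$ by a convex combination, subtract $F(X)$, divide by $\alpha$, and pass to the limit. The only addition you make is the (reasonable, but unstated in the paper) check that $Y^t$ stays in the admissible set.
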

\begin{proof}
%Let $Y^0$ and $Y^1$ be points in $\T_r$, and let $\Gamma_{Y^0 Y^1}(t):[0,1] \to \T_r$ be a function which parameterizes the geodesic from $Y^0$ to $Y^1$, with $\Gamma_{Y^0 Y^1}(0) = Y^0$ and  $\Gamma_{Y^0 Y^1}(1)=Y^1$. 
Let $Y^0$ and $Y^1$ be a points in $\T_r$ such that $\Or(X) \subseteq \Or(Y^0)$
and $\Or(X) \subseteq \Or(Y^1)$.
Let $Y^t$ be the point which is proportion $t$ along the geodesic from $Y^0$ to $Y^1$.
Let $\Gamma_{X Y^t}(\alpha):[0,1] \to \T_r$ be a function which parameterizes the geodesic from $X$ to $Y^t$.
Using Lem. \ref{lem:LocalGeoTriangle} and the strict convexity of $F$ together yields
\begin{align}
F(\Gamma_{XY^t}(\alpha))<F(\Gamma_{XY^0}(\alpha))(1-t)+F(\Gamma_{XY^1}(\alpha))t \label{eq:GeodesicConvexity}
\end{align}
The directional derivative from $X$ in the direction of $\Gamma_{XY^t}(\alpha)$ is
\begin{align}
F'(X,Y^t)& = \lim_{\alpha \to 0} \frac{ F(\Gamma_{XY^t}(\alpha))-F(X)}{\alpha}
\end{align}
Substituting for $F(\Gamma_{XY^t}(\alpha))$ using the inequality on line (\ref{eq:GeodesicConvexity}) yields,
\begin{align}
F'(X,Y^t)& \leq \lim_{\alpha \to 0} \frac{ F(\Gamma_{XY^0}(\alpha))(1-t)+F(\Gamma_{XY^1}(\alpha))t-F(X)}{\alpha}
\end{align}
Note that strict inequality may not hold even though the Fr\'{e}chet function is convex because
in the limit the value may approach an infimum. 
Simplifying by separating the fraction and limit reveals that the directional derivative is convex in $Y$,
\begin{align}
F'(X,Y^t) &\leq (1-t)\lim_{\alpha \to 0}\frac{F(\Gamma_{XY^0}(\alpha))-F(X)}{\alpha} + t\lim_{\alpha\to 0}\frac{F(\Gamma_{XY^1}(\alpha))-F(X)}{\alpha}\\
&= (1-t) F'(X,Y^0) + tF'(X,Y^1)
\end{align}
\end{proof}

\begin{lem}\label{lem:DirDerC1}
Let $X$ and $Y$ be points such that $\Or(X)\subseteq \Or(Y)$. $F'(X,Y)$ is a $C^1$ function of $Y$ on the interior of the orthant $\Or(Y)$.
\end{lem}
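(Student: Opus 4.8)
The plan is to use the explicit formula for $F'(X,Y)$ from Theorem~\ref{thm:DirDerValue} and show that, when $Y$ is restricted to vary inside the fixed orthant $\Or(Y)$, every term in that formula is a $C^1$ function of the edge-length coordinates of $Y$. First I would fix a point $\bar Y$ in the interior of $\Or(Y)$ and observe that, for $Y$ in a sufficiently small neighborhood of $\bar Y$ inside $\Or(Y)$, the combinatorial data of all the geodesics $\Gamma(X,Y)$ and $\Gamma(Y,T^i)$ is locally constant: the supports $(\A^i,\B^i)$ for the geodesics from $Y$ to each $T^i$ do not change, since $Y$ stays in the interior of a single multi-vistal facet (or if $\bar Y$ lies on a vistal face interior to $\Or(Y)$, Lemma~\ref{lem:DDWellDefined} tells us the value of $F'(X,Y)$ is independent of which valid support we pick, so we may work with any one of them and the formula is consistent). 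The partition of support pairs into those with $\norm{A^i_l}_X = 0$ and those with $\norm{A^i_l}_X > 0$ is determined by $X$, not $Y$, so it too is fixed; likewise $E_X$ and the sets $C^i$ are fixed.

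Next I would go through the three groups of terms in the Theorem~\ref{thm:DirDerValue} expression for $F'(X,Y)$, writing $p_e = |e|_Y - |e|_X$ so the dependence on $Y$ is through the coordinates $|e|_Y$. The term $\sum_{e\in E_X} p_e[\nabla F(X)]_e$ is linear in the $|e|_Y$, hence $C^\infty$. The term $\sum_{e\in C^i\setminus E_X} p_e |e|_{T^i}$ is again linear in the $|e|_Y$. The only potentially troublesome terms are $\norm{A^i_l}_P \norm{B^i_l}_{T^i}$ for the support pairs with $\norm{A^i_l}_X = 0$; here $\norm{B^i_l}_{T^i}$ is a constant, and $\norm{A^i_l}_P = \sqrt{\sum_{e\in A^i_l} p_e^2} = \sqrt{\sum_{e\in A^i_l} |e|_Y^2}$ because those edges have zero length in $X$. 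Now $\sqrt{\sum_{e\in A^i_l}|e|_Y^2}$ is the Euclidean norm of a vector of coordinates of $Y$, and this is $C^\infty$ away from the origin of that coordinate subspace, i.e. as long as not all edges in $A^i_l$ vanish in $Y$. But the edges in $A^i_l$ (being in $E_Y\setminus C^i$, hence in $E_Y$) have strictly positive length throughout the interior of $\Or(Y)$, so the argument of the square root is bounded away from zero on a neighborhood of $\bar Y$; thus this term is $C^1$ (indeed $C^\infty$) there.

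Finally I would assemble these observations: $F'(X,Y)$ is, on a neighborhood of any interior point of $\Or(Y)$, a finite sum of $C^\infty$ functions of the coordinates $(|e|_Y)_{e\in E_Y}$, hence $C^1$ (in fact $C^\infty$) on the interior of $\Or(Y)$. I expect the main obstacle, or at least the main point requiring care, to be the bookkeeping around the case where $\bar Y$ sits on a vistal face interior to $\Or(Y)$ rather than in a vistal facet interior: there the support sequences are not unique and one must invoke Lemma~\ref{lem:DDWellDefined} to know the formula gives a well-defined value, and then check that this common value, expressed via any fixed valid support on each side of the face, extends smoothly across the face. This is really the content already established in Lemma~\ref{lem:DDWellDefined} together with Lemma~\ref{lem:DirDerContinuous}, so the remaining work is to note that the smooth pieces agree to first order; since each piece is a sum of norms of the form $\sqrt{\sum |e|_Y^2}$ over subsets of positive-length edges, and these norms split additively across the (P2)/(P3) merges exactly as in the proof of Lemma~\ref{lem:DDWellDefined}, the gradient matches across the face and $C^1$-ness on the interior of $\Or(Y)$ follows.
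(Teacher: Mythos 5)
Your proof is correct and follows essentially the same route as the paper's: argue piecewise smoothness of $F'(X,Y)$ within each multi-vistal face of $\Or(Y)$ (the troublesome square-root terms $\sqrt{\sum_{e\in A^i_l}|e|_Y^2}$ are smooth away from zero, and for $Y$ interior to $\Or(Y)$ the relevant coordinates are bounded away from zero), and then check that the gradient extends continuously across (P2)/(P3) boundaries using the same norm-splitting identity that underlies Lemma~\ref{lem:DDWellDefined}. The paper's own proof is just a one-sentence assertion of these two facts, so your write-up supplies the detail the paper leaves implicit.
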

\begin{proof}
Within any fixed multi-vistal face the algebraic form of $F$ is a sum of smooth functions, and the restricted gradient function is continuous at the boundaries of multi-vistal faces relative to the interior of $\Or(Y)$. 
\end{proof}

Let $X$ and $Y$ be points in $\T_r$ such that $X$ and $Y$ share a pre-multi-vistal facet defined by geodesics from $X$ to $T^1,...,T^n$.
If this is the case, then either (i) $X$ and $Y$ have the same topology, (ii) $X$ is a contraction of $Y$ or (iii) $Y$ is a contraction of $X$.
Assume that if the topologies of trees $X$ and $Y$ differ then $X$ is a contraction of $Y$, that is $\Or(X) \subseteq \Or(Y)$. 
Let $\Gamma(X,Y;\alpha)$,
where $0 \leq \alpha \leq 1$, be the point $\alpha$ proportion along the geodesic from $X$ to $Y$.
In the limit as $\alpha$ approaches 0, the behavior of support pairs from $X$ to $T$ is pivotal in understanding the behavior of the Fr\'{e}chet function on faces of orthants. 
In particular, it is important to distinguish those support pairs which do not exist at a point $X$ on the face of $\Or(Y)$.
\begin{defn}
Let $(A_1,B_1),\ldots,(A_k,B_k)$ be a support sequence for the geodesic from $Y$ to a tree $T$, as in the definition of directional derivative above, Def. \ref{def:DirDer}.
Let any support pair $(A_l,B_l)$ such that $\norm{A_l}_X =0$ be called a \emph{local support pair}.
\end{defn}

Local support pairs will be the earliest support pairs in a support sequence for the geodesic between $Y$ and $T$. 
$Y$ and $X$ share a vistal facet, that is their geodesics to $T$ can be represented with the same support sequence. According to $(P2)$, any support pair such that $\norm{A_l}_X =0$ must be among the first support pairs in the support sequence.
Thus, let $(A_1,B_1),\ldots,(A_m,B_m)$ be local support pairs, and let $(A_{m+1},B_{m+1}),\ldots,(A_k,B_k)$ be the rest of the geodesic support sequence being used to represent the geodesic between $Y$ and $T$.

Let $\tilde{B}$ be all edges from $T$ which are incompatible with at least one edge in $Y$ but not incompatible with any edge in $X$ and let $\tilde{A}$ be all edges from $Y$ which are incompatible
with some edge in $\tilde{B}$. 
\begin{lem}\label{lem:LocalSupportPairsComposition}
Any sequence of local support pairs, $(A_1,B_1),\ldots,(A_m,B_m)$ have the property that the sets $A_1,\ldots,A_m$ partition $\tilde{A}$ and $B_1,\ldots,B_m$ partition $\tilde{B}$.
\end{lem}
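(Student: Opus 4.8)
The two \textbf{partition} assertions amount to the set equalities $\bigcup_{l=1}^{m}A_l=\tilde A$ and $\bigcup_{l=1}^{m}B_l=\tilde B$, since the $A_l$ (resp.\ $B_l$) in a support sequence are automatically pairwise disjoint and, by the paragraph preceding the lemma, the local pairs are exactly the first $m$ pairs $(A_1,B_1),\dots,(A_m,B_m)$. The plan is to prove the $B$-equality first, by two inclusions, and then to read off the $A$-equality from it together with (P1).

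\emph{The easy direction.} For $\bigcup_{l\le m}B_l\subseteq\tilde B$: if $b\in B_l$ with $l\le m$ then $b\in E_T\setminus C$, so $b$ is incompatible with some edge of $Y$; and if $b$ were incompatible with some $a\in E_X$, then $a\notin C$, so $a\in A_{l'}$ for some $l'$, and (P1) forces $l'\le l\le m$, making $(A_{l'},B_{l'})$ local and contradicting $\norm{A_{l'}}_X>0$. Hence $b\in\tilde B$. Granting this, both $A$-inclusions are short: if $a\in\tilde A$ is incompatible with some $b\in\tilde B\subseteq\bigcup_{l\le m}B_l$, say $b\in B_{l'}$ with $l'\le m$, then $a\notin C$ so $a\in A_l$, and (P1) forces $l\le l'\le m$; conversely if $a\in A_l$ with $l\le m$, then using that a geodesic support pair has no isolated edge (every edge of $A_l$ is incompatible with some edge of $B_l$ — a property of the canonical support, which by Lemma \ref{lem:DDWellDefined} we may use without changing $\bigcup_{l\le m}A_l$ or $\bigcup_{l\le m}B_l$) some $b\in B_l\subseteq\tilde B$ is incompatible with $a$, so $a\in\tilde A$.

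\emph{The hard direction.} Everything thus reduces to $\tilde B\subseteq\bigcup_{l\le m}B_l$, equivalently: no edge of $T$ lying in a \emph{non-local} pair $B_l$ with $l>m$ is compatible with all of $X$. Here I would use that $X$ and $Y$ lie in a common vistal facet of $T$: restricting the support $(\A,\B)$ of $\Gamma(Y,T)$ to $\Or(X)$ replaces each $A_l$ by $A_l\cap E_X$, collapses exactly the pairs with $l\le m$, and moves exactly the edges $\bigcup_{l\le m}B_l$ into the common-edge set $C_X$ of $\Gamma(X,T)$; the content of the lemma is that this is an \emph{equality}, i.e.\ that no $B$-edge of a surviving pair is absorbed into $C_X$. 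I would obtain this from the canonical description of vistal cells in \cite[Sec. 3.2.5]{Miller2015}: there each support pair of $\Gamma(Y,T)$ is \emph{pure} with respect to the face corresponding to $\Or(X)$ — either untouched or deleted when one passes to that face — so every non-local pair has $A_l\subseteq E_X$, and then the no-isolated-edge property forces every $b\in B_l$ with $l>m$ to be incompatible with some edge of $A_l\subseteq E_X$, whence $b\notin\tilde B$.

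\emph{Main obstacle.} The crux is exactly this purity claim — equivalently the identity $C_X\cap E_T=(C\cap E_T)\cup\bigcup_{l\le m}B_l$. It does not follow from (P1)--(P3) applied to a single pair, since a "mixed" pair (some edges of $A_l$ in $E_X$, some not, with a $B_l$-edge compatible with all of $X$) can be checked to satisfy all three properties in isolation; so the argument must appeal to the combinatorial structure of the shared vistal facet itself — strictness of (P2)/(P3) there and Miller's canonical support — rather than merely to the defining inequalities. Everything else in the proof is bookkeeping with (P1) and the definition of the common-edge set $C$.
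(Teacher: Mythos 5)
Your proof has the same overall architecture as the paper's: establish $\bigcup_{l\le m}B_l\subseteq\tilde B$ via (P1), reduce the $\tilde A$-statements to the $\tilde B$-statement via (P1) together with the no-isolated-edge property of support pairs, and treat $\tilde B\subseteq\bigcup_{l\le m}B_l$ as the substantive step. Your ``easy direction'' is in fact more careful than the paper's, which asserts the conclusion ``is compatible with every edge in $X$ because for a local support pair $\norm{A_l}_X=0$'' without spelling out the (P1) contradiction you give; and your $\tilde A$-inclusions match the paper's, which also invokes (implicitly) that every edge of $B_l$ is incompatible with some edge of $A_l$.

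The one genuine gap is in your ``hard direction,'' and it is a gap of omission rather than of approach. You correctly locate the crux — the purity claim that every non-local pair has $A_l\subseteq E_X$ — and correctly observe that this does not follow from (P1)--(P3) evaluated at $Y$ alone. But you then hedge and defer to Miller's canonical description without giving an argument, leaving open whether the purity claim holds at all. It does, and the mechanism is more elementary than you suggest: you do not need the full machinery of \cite[Sec.~3.2.5]{Miller2015}, only the fact that $X$ lies in the closure of the vistal facet $\V^2(T,\Or;\A,\B)$, so the (P3) inequalities hold (non-strictly) with the squared edge lengths of $X$. Concretely, suppose $b\in\tilde B$ lay in $B_l$ with $l>m$. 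The no-isolated-edge property gives some $a\in A_l$ incompatible with $b$; since $b$ is compatible with all of $X$, $a\notin E_X$, so $A_l\setminus E_X\neq\emptyset$, while $l>m$ gives $A_l\cap E_X\neq\emptyset$. Now take $I=A_l\cap E_X$ and $J=\{b\}$: $I\cup J$ is compatible (all of $I$ lies in the tree $X$ and $b$ is compatible with $X$), and (P3) at $X$ reads
\begin{align}
\norm{B_l\setminus\{b\}}^2\sum_{e\in A_l\setminus E_X}\xi_e\;\geq\;|b|_T^2\sum_{e\in A_l\cap E_X}\xi_e,
\end{align}
whose left side is zero at $X$ while the right side is strictly positive — a contradiction. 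This yields $\tilde B\subseteq\bigcup_{l\le m}B_l$ directly, after which the rest of your bookkeeping closes the proof. (For comparison, the paper's own treatment of this step — ``such an edge cannot be in any of the support pairs with edges from $X$, and thus must be in a local support pair'' — is just as terse as yours and leaves the same appeal implicit, so you have accurately identified the only nontrivial content of the lemma.)
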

\begin{proof}
%By definition each local support pair $(A_l,B_l)$, has $\norm{A_l}_X=0$.This is equivalent to stating that every edge $e$ in $B_l$ in a local support pair must be compatible with every edge in $X$, that is $e$ is in $\tilde{B}$.

Any edge in $T$ which is incompatible with an edge in a local support pair, $(A_l,B_l)$ is compatible with every edge in $X$ because for a local support pair $\norm{A_l}_X=0$. Therefore any edge from $T$ in a local support pair must be in $\tilde{B}$. 

An edge in $\tilde{B}$ is compatible with every edge in $X$. Therefore, such an edge cannot be in any of the support pairs with edges from $X$, and thus must be in a local support pair.

Suppose an edge, $e$, from $Y$ is in a local support pair, $(A_l,B_l)$, then it must incompatible with at least one edge in $T$. All the edges which are in $B_l$ must be compatible with all edges in $X$ because  $\norm{A_l}_X=0$. 
Since $e$ is incompatible with some edge in $T$ that is not incompatible with any edge in $X$, $e$ must be in $\tilde{A}$.

Let $e$ be an edge in $\tilde{A}$. Edge $e$ is not in $X$, and edge $e$ is incompatible with at least one edge in $T$ which no edge in $X$ is incompatible with. Edge $e$ must be in a support pair so that along the geodesic the length of $e$ contracts to zero before all the edges in $\tilde{B}$ can switch on. Therefore $e$ must be a support pair with 
at least one of the edges in $\tilde{B}$ that it is incompatible with.
Since all edges in $\tilde{B}$ are in local support pairs, all edges in $\tilde{A}$ must also be in local support pairs. 
\end{proof}
\begin{cor}\label{cor:LocalSupportPairs}
Any sequence of local support pairs which is valid for the geodesic from $Y$ to $T$ is also valid for the geodesic from $Y_\perp$ to $T$ and vice versa. 
%Furthermore the same local support pairs are valid for any directional derivative from a point in the interior of $\Or(X)$ in a direction whose orthogonal component (or more generally the component for the subset of edges which are in local support pairs) is parallel to $Y_\perp$. 
\end{cor}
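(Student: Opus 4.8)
The plan is to deduce the corollary from Lemma~\ref{lem:LocalSupportPairsComposition} together with the observation that the data governing the local support pairs is literally the same at $Y$ and at $Y_\perp$. Write $(A_1,B_1),\ldots,(A_m,B_m)$ for a candidate sequence of local support pairs. By Lemma~\ref{lem:LocalSupportPairsComposition}, if this sequence occurs as the local part of a valid support sequence for the geodesic from $Y$ to $T$, then $A_1,\ldots,A_m$ partition $\tilde{A}$ and $B_1,\ldots,B_m$ partition $\tilde{B}$. The first point I would record is that $\tilde{A}$ and $\tilde{B}$ depend only on the topologies of $X$, $Y$ and $T$: $\tilde{B}$ is the set of edges of $T$ incompatible with some edge of $Y$ but with no edge of $X$, and $\tilde{A}$ is the set of edges of $Y$ incompatible with some edge of $\tilde{B}$ (note $\tilde{A}\subseteq E_Y\setminus E_X$, since every edge of $\tilde{B}$ is compatible with all of $X$). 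Since $Y_\perp$ has the same edge set as $Y$ — it differs from $Y$ only in the lengths of the edges of $\Or(X)$ — and $X$ is a contraction of $Y_\perp$ exactly as $X$ is a contraction of $Y$, the sets $\tilde{A}$, $\tilde{B}$ attached to the pair $(X,Y_\perp)$ are the same sets as those attached to $(X,Y)$. Moreover, because $\tilde{A}\subseteq E_Y\setminus E_X$, every edge of $\tilde{A}$ has the same length in $Y$ and in $Y_\perp$, and the edges of $\tilde{B}$ lie in $T$; hence $\norm{S}_Y=\norm{S}_{Y_\perp}$ for all $S\subseteq\tilde{A}$, and $\norm{S'}_T$ is unchanged for $S'\subseteq\tilde{B}$.

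Next I would check that, under this dictionary, being a valid sequence of local support pairs is the same condition for the geodesic from $Y$ to $T$ and for the geodesic from $Y_\perp$ to $T$. Property (P1) among the local pairs is a statement about compatibility of splits, hence topology-only; properties (P2) and (P3) restricted to the local pairs involve only the norms $\norm{A_l}$, $\norm{B_l}$ and compatibility relations among subsets of $\tilde{A}\cup\tilde{B}$, and by the previous paragraph each such norm takes the same value whether computed with the edge lengths of $Y$ or of $Y_\perp$. It then remains to handle the interaction with the remaining, non-local support pairs. A non-local pair $(A_l,B_l)$, $l>m$, has $\norm{A_l}_X>0$, so $A_l$ contains an edge of positive length in $X$; since local pairs precede non-local pairs in any valid support sequence, and along a geodesic issuing from $X$ the (P2)-ratio of every local pair tends to $0$ while that of every non-local pair stays bounded below by a positive constant, the (P2) inequality at the junction between the last local pair and the first non-local pair is automatically slack, and the only cross-junction instances of (P1) say that each non-local $A_l$ is compatible with the combinatorially fixed set $\tilde{B}$. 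None of these cross-junction requirements distinguishes $Y$ from $Y_\perp$. Consequently a sequence $(A_1,B_1),\ldots,(A_m,B_m)$ is a valid sequence of local support pairs for the geodesic from $Y$ to $T$ exactly when it partitions $(\tilde{A};\tilde{B})$ and satisfies (P1)--(P3) internally, and the identical characterization holds for the geodesic from $Y_\perp$ to $T$; this yields the equivalence and its converse.

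The step I expect to be the main obstacle is making the ``interaction with the non-local pairs'' argument fully rigorous for $Y_\perp$, in particular justifying that Lemma~\ref{lem:LocalSupportPairsComposition} and the fact that local pairs precede non-local pairs apply to the geodesic from $Y_\perp$ to $T$. For the geodesic from $Y$ to $T$ these rest on $X$ and $Y$ sharing a vistal facet, whereas $Y_\perp$ need not lie in the same closed vistal facet as $X$. I would get around this by working along the geodesic $\Gamma(X,Y_\perp;\alpha)$: for small $\alpha$ the point $Z_\alpha$ on it shares a vistal facet with $X$, has the same topology as $Y_\perp$, and is a contraction of $X$ with the same $\tilde{A},\tilde{B}$, so Lemma~\ref{lem:LocalSupportPairsComposition} applies to the geodesic from $Z_\alpha$ to $T$; and because the lengths of the edges of $E_Y\setminus E_X$ in $Z_\alpha$ are simply $\alpha$ times their lengths in $Y_\perp$, the (P2)/(P3) conditions on the local pairs are scale-invariant, so the valid sequences of local support pairs for the geodesic from $Z_\alpha$ to $T$ (small $\alpha$) are exactly those for the geodesic from $Y_\perp$ to $T$. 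Tracing $\alpha$ down to $0$ completes the reduction and hence the proof.
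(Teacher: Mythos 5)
Your proposal is correct and follows essentially the same route as the paper's (very terse) proof: both rest on Lemma~\ref{lem:LocalSupportPairsComposition} to pin down the edge sets $\tilde{A},\tilde{B}$ composing the local pairs, and on the observation that these edges lie in $E_Y\setminus E_X$ so their lengths agree in $Y$ and $Y_\perp$, with a scaling-by-$\alpha$ argument (the paper's ``factoring out $\alpha$'') to handle the fact that $Y_\perp$ need not lie in the same vistal facet as $X$. Your write-up is considerably more explicit than the paper's two-sentence proof, in particular in separating the internal (P1)--(P3) checks on the local pairs from the junction with the non-local pairs, but the underlying argument is the same.
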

\begin{proof}
Lem. \ref{lem:LocalSupportPairsComposition} implies local support pairs for the geodesic from $Y$ to $T$ and $Y_\perp$ to $T$ would be composed from
the same sets of edges. Factoring out $\alpha$, we see that the relative lengths of edges in $\tilde{A}$ are the same in $Y$ and $Y_\perp$.
\end{proof}

\noindent{\bf Proof of Thm. \ref{thm:DirDerDecomposition}}
\begin{proof}
Note that since $X$ and $Y$ are in the same orthant, the geodesic $\Gamma_{XY}$ is just the line segment $XY$. Let $P=Y-X$, and let $P_X$ and $P_\perp$ be its decomposition into the parts corresponding to $Y_X$ and $Y_\perp$.
Let $Z$ be a point on $XY$ denoted by $|e|_Z = |e|_X + \alpha p_e$.
Let $Z_X=X+\alpha P_X$ and let $Z_\perp=X+\alpha P_\perp$ be the component of $Z$ orthogonal to $\Or(X)$. 
By Cor. \ref{cor:DirDerTangentSpace}, the value of the directional derivative from $X$ to $Y_X$ is
\begin{align}
F'(X,Y_X)=\lim_{\alpha \to 0} \frac{F(Z_X)-F(X)}{\alpha}=\sum_{e\in E_X} p_e \left[\nabla F(X)\right]_e
\end{align}
and the directional derivative from $X$ to $Y_\perp$ is
\begin{align}
F'(X,Y_\perp) &= \lim_{\alpha \to 0} \frac{F(Z_\perp)-F(X)}{\alpha}\\
&=2 \sum_{i=1}^n \left ( \sum_{l: \norm{A^i_l}_X = 0} \left( \norm{B^i_l}\sqrt{\sum_{e \in A^i_l} p_e^2 } \right) -\sum_{e \in C^i \setminus E_X} |e|_{T^i} p_e  \right)
\end{align}

\end{proof}

\noindent{\bf Proof of Thm. \ref{thm:DirDerOfDirDer}}
\begin{proof}
The starting point for proving this claim will be analysis of the difference of function values $F'(X,Y^\alpha)-F'(X,Y)$. 
Lem. \ref{lem:DDWellDefined} states the value of the directional derivative can be expressed by any valid support sequences.
Lem. \ref{lem:DirDerContinuous} states the directional derivative is a continuous function of $Y$.
Taken together these lemmas imply that the values of both $F'(X,Y^\alpha)$ and $F'(X,Y)$ can be expressed
using any valid geodesic support sequences for the geodesic from $T^i$ to $Y^\alpha$, when $\alpha$ is small enough. The difference of function values,  $F'(X,Y^\alpha)-F'(X,Y)$ simplifies to
\begin{align}
F'(X,Y^\alpha)-F'(X,Y)&= \sum_{e\in E_X} \alpha (|e|_{Y'}-|e|_Y)[\nabla F(X)]_e \label{DiffEq1L1}\\
& -\sum_{e \in C^i \setminus E_X} \alpha (|e|_{Y'}-|e|_Y)|e|_{T^i} \label{DiffEq1L2}\\
& + \sum_{i=1}^n \sum_{l:\norm{A^i_l}_X=0} (\norm{A^i_l}_{Y^\alpha}-\norm{A^i_l}_Y)\norm{B^i_l}_{T^i}.\label{DiffEq1L3}
\end{align}
Splitting terms on line (\ref{DiffEq1L2}) into two cases (i) when $ e \in (C^i \setminus E_X) \cap E_Y$ (ii) when $e \in C^i \setminus E_Y$ and splitting terms on line (\ref{DiffEq1L3}) into two cases, (i) when $\norm{A^i_l}_Y>0$ and (ii) when $\norm{A^i_l}_Y=0$ yields 
\begin{align}
F'(X,Y^\alpha)-F'(X,Y)&= \sum_{e\in E_X} \alpha (|e|_{Y'}-|e|_Y)[\nabla F(X)]_e \label{DiffEq2L1}\\
& -\sum_{e \in (C^i \setminus E_X) \cap E_Y} \alpha (|e|_{Y'}-|e|_Y)|e|_{T^i} \label{DiffEq2L2}\\
& -\sum_{e \in C^i \setminus E_Y} \alpha (|e|_{Y'}-|e|_Y)|e|_{T^i} \label{DiffEq2L3}\\
& + \sum_{i=1}^n \sum_{\begin{array}{c} l:\norm{A^i_l}_X=0\\ \;\;\;\;\norm{A^i_l}_Y >0\end{array}} (\norm{A^i_l}_{Y^\alpha}-\norm{A^i_l}_Y)\norm{B^i_l}_{T^i}\label{DiffEq2L4}\\
& + \sum_{i=1}^n \sum_{\begin{array}{c} l:\norm{A^i_l}_X=0\\ \;\;\;\;\norm{A^i_l}_Y =0\end{array}} (\norm{A^i_l}_{Y^\alpha}-\norm{A^i_l}_Y)\norm{B^i_l}_{T^i}.\label{DiffEq2L5}
\end{align}
Now the difference, $F'(X,Y^\alpha)-F'(X,Y)$ is simplified to a state where it is convenient to analyze the derivative, that is,
\begin{align}
\lim_{\alpha \to 0} \frac{F'(X,Y^\alpha)-F'(X,Y)}{\alpha} & = \sum_{e\in E_X}(|e|_{Y'}-|e|_Y)[\nabla F(X)]_e \label{DiffEq3L1}\\
& -\sum_{e \in (C^i \setminus E_X)\cap E_Y}(|e|_{Y'}-|e|_Y)|e|_{T^i} \label{DiffEq3L2}\\
& -\sum_{e \in C^i \setminus E_Y}(|e|_{Y'}-|e|_Y)|e|_{T^i} \label{DiffEq3L3}\\
& + \sum_{i=1}^n \sum_{\begin{array}{c} l:\norm{A^i_l}_X=0\\ \;\;\;\;\norm{A^i_l}_Y >0\end{array}} \sum_{e\in A^i_l} |e|_Y (|e|_{Y'}-|e|_Y) \frac{\norm{B^i_l}_{T^i}}{\norm{A^i_l}_Y}\label{DiffEq3L4}\\
& + \sum_{i=1}^n \sum_{\begin{array}{c} l:\norm{A^i_l}_X=0\\ \;\;\;\;\norm{A^i_l}_Y =0\end{array}} \norm{A^i_l}_{Y'}\norm{B^i_l}_{T^i}.\label{DiffEq3L5}
\end{align}
Taking partial derivatives of $F'(X,Y)$ shows that the sum of terms on lines (\ref{DiffEq3L1}), (\ref{DiffEq3L2}), and (\ref{DiffEq3L4}) is equal to $\sum_{e \in E_Y} (|e|_{Y'}-|e|_Y)[\nabla F'(X,Y)]_e$. One can verify, by using the formula for the value of the directional derivative, Thm. \ref{thm:DirDerValue}, that the sum of terms on lines (\ref{DiffEq3L3}) and (\ref{DiffEq3L5}) is equal to $F'(Y,Y'_{\perp \Or(Y)})$.
\end{proof}

\noindent{\bf Proof of Thm. \ref{thm:RecursiveOptimalityConditions}}
\begin{proof}
If $F'(Y^{k-1},Y^k) \geq 0$ and $\nabla F'(Y^{k-2},Y^{k-1}) \perp \Delta^i$ then $Y^{k-1}$ is a minimizer of $F'(Y^{k-2},Y)$ in $\Or(E)$. By induction, $Y^1$ is a minimizer of $F'(Y^0,Y)$ in $\Or(E)$. Since $F'(Y^0,Y) \geq 0$, $Y^0$ is a minimizer of $F(X)$ in $\Or(E)$.\\
Now the other direction is proved. If $Y^0$ is optimal in $\Or(E)$, then there exists a 
minimizer, $Y^1$ of $F'(Y^0,Y)$ which satisfies the assumptions of the theorem.
Inductively, the minimizer, $Y^{i+1}$, of $F'(Y^i,Y)$ must also satisfy the assumptions of the theorem.
\end{proof}

\section{Concluding remarks and research directions}\label{sec:conclusion}

We obtained relative optimality conditions and algorithms for 
optimizing the Fr\'{e}chet function in an orthant of treespace.
However, a further challenge remains - a method to quickly verify a point is optimal with respect to all orthants which contain it.
The shear number of orthants which contain a point may be
very large with respect to the data, however this is not an indication that there are no polynomial certificates.
In fact, an indicator this problem is not NP-complete is
randomized split-proximal 
point algorithms produce sequences of points with expected distances
to the Fr\'{e}chet mean converging to zero
at a linear rate, and
no approximation methods with such a rate of convergence exists
for NP-complete optimization problems e.g. weighted clique problems. 
One interesting direction is to research why
the maximum clique problem is harder (or not harder) than
finding the orthant of treespace, or equivalently a clique in the split-split compatibility graph (see Sec. \ref{sec:TreeSpace}), which contains the Fr\'{e}chet mean.

Another area of interest is accelerating existing optimization techniques.
One way to accelerate Fr\'{e}chet optimization is to speed up geodesics optimization.
Notice that the systems of equations defining pre-vistal facets and pre-vistal cells
are quadratic cones with cone points at the origin of treespace. 
In squared treespace, the vistal facets and vistal cells are polyhedral cones.
Multi-vistal facets are  also polyhedral cones with cones points
at the origin of treespace because they are
intersections of polyhedral cones share a cone point
at the origin of treespace. 
The nice geometric structure of vistal cells could be useful for determining 
when a search point is on the boundary of a vistal cell, and thus when
the objective function has multiple forms.
Therefore the geometry and combinatorics of vistal cells 
are a mathematical basis for methods to
dynamically update the objective function during line searches.
Such dynamic updating is a topic of further research, and will be the focus of a separate paper.

The necessity to define a fixed set of labeled leaves to use BHV Treespace
limits applications.
The space of treeshapes \cite{Feragen} relaxes the requirement
for labeled leaves and generalizes the attributes of edges from scalars to vectors.
However this flexibility comes at the cost of much more challenging 
optimization problems.
Effective and robust methods for problems, such as computing geodesics and means, are not yet available.

In phylogenetics, when the root of the tree is a common ancestor, there is often a condition that the path in the tree from the root to each other leaf is maintained to a fixed constant.
BHV treespace is also a superset of such trees.
An interesting research direction is optimizing the Fr\'{e}chet mean subject to constraints on the edges of the target tree.

\section*{Acknowledgments}
We would like to acknowledge the assistance of SIOPT Editors and anonymous reviewers for peer review of this article and their advice.

\bibliographystyle{siamplain}
\bibliography{FM_OPT_references}

\end{document}